\DeclareSymbolFont{bbold}{U}{bbold}{m}{n}
\DeclareSymbolFontAlphabet{\mathbbold}{bbold}
\newcommand\IH{{\mathbb{H}}}
\newcommand\C{{\mathbb{C}}}
\newcommand\ID{{\mathbb{D}}}
\newcommand\Q{{\mathbb{Q}}}
\newcommand\R{{\mathbb{R}}}
\newcommand\N{{\mathbb{N}}}
\newcommand\Pone{{\widehat\C}}
\newcommand\Mod{{\operatorname{PMod}}}
\newcommand{\rs}{\widehat{\C}}
\newcommand{\Aut}{{\operatorname{Aut}}}
\newcommand{\Teich}{\text{Teich}}
\newcommand{\Moduli}{\text{Moduli}}
\DeclareMathOperator{\arcsinh}{arcsinh}
\newtheorem{thm}{Theorem}[section]
\newtheorem{cor}[thm]{Corollary}
\newtheorem{lem}[thm]{Lemma}
\newtheorem{prop}[thm]{Proposition}
\newtheorem{setup}[thm]{Setup}
\newtheorem{mainthm}{Theorem}
\newtheorem{maincor}[mainthm]{Corollary}
\newtheorem{mainconj}[mainthm]{Conjecture}
\theoremstyle{definition}
\newtheorem{defn}[thm]{Definition}
\theoremstyle{remark}
\newcommand{\sfX}{{\sf X}}
\newcommand{\sfx}{{\sf x}}
\newcommand{\sfb}{{\sf b}}
\newcommand{\sfa}{{\sf a}} 
\newcommand{\cT}{\mathcal{T}}
\newcommand{\cS}{\mathcal{S}}
\newcommand{\cusps}[1]{\text{\sc cusps}(#1)}
\newcommand{\graphs}{\text{\sc{graphs}}}
\renewcommand\gg{>\!\!>}
\renewcommand\ll{<\!\!<}
\begin{document}
\author{Laurent Bartholdi} 
\email{laurent.bartholdi@gmail.com}

\author{Dzmitry Dudko}
\email{dzmitry.dudko@gmail.com}

\author{Kevin M. Pilgrim}
\email{pilgrim@iu.edu}

\title[Correspondences on Riemann surfaces]{Correspondences on Riemann surfaces: (non-uniform) hyperbolicity and graph attractors}

\begin{abstract}
   We consider certain correspondences on a Riemann surface, and show that they admit a weak form of hyperbolicity: sufficiently long loops get shorter under lifting at a fixed point and closing.  In terms of their algebraic encoding by bisets, this translates to contraction of fundamental group elements along sequences arising from iterated lifting. 

   As an application, we show that apart from the usual Latt\`es counterexamples, for any rational map on $\Pone$ with $4$ post-critical points, there is a finite invariant collection of isotopy classes of curves into which every curve is attracted under iterated lifting.  More generally, among graphs of given complexity, there exists a finite invariant collection of isotopy classes of graphs into which every graph is attracted. Applied to sufficiently rich graphs, the graph attractor provides a finite set of topological normal forms for the rational map.

   We also present a strategy towards proving the same statements for maps with more than $4$ post-critical points.
 \end{abstract}

\subjclass{37F20 (20E08, 37B15, 37C50)}

\keywords{Analytic correspondence, expansion, finite curve attractor, bisets}

\date{compiled \today, modified 2025-VII-03}

\maketitle
\setcounter{tocdepth}{1}


\section*{Prologue}
Thanks to recent efforts, many key aspects of the theory of surface homeomorphisms have been extended to the non-invertible setting of post-critically finite (PCF) branched self-coverings of $S^2$, now known as \emph{Thurston maps}.  Common themes include obstructions to geometrization in the form of multicurves with certain invariance properties~\cite{MR1251582, farb:margalit:mcg, MR3071467}, actions on Teichmüller space~\cite{ Belk:2023aa, selinger:pullback}, on curve and tree complexes~\cite{MR4510015}, and on sets of graphs and related objects such as train tracks~\cite{zbMATH07239271, MR4068871}.  An analogous decomposition theory has been developed~\cite{MR2020454} with application to algorithmic aspects~\cite{MR3781419, MR3434502, MR3443373, MR4278340}.

The case of PCF hyperbolic polynomials is well understood, but stands apart from both homeomorphisms and general Thurston maps.  The forward-invariant \emph{Hubbard tree} of such a polynomial serves as a complete combinatorial invariant \cite{MR2650982}.  Under lifting by such a map, all curves (and all nearby structures like generic trees) converge exponentially fast to a finite invariant set of curves (and to a small combinatorial neighborhood of the Hubbard tree) \cite{MR4510015}; see also Remark~\eqref{rem:2} in~\S\ref{ss:thurston}. This is in contrast with both the case of typical homeomorphisms, for which curves become exponentially more complicated under iteration (though they do converge to an invariant train-track), and with many obstructed Thurston maps, which may have wandering or infinitely many cycles of curves.

In the setting of a PCF rational map $f$, a good substitute for train tracks of homeomorphisms, as well as Hubbard trees of polynomials, seems to be an invariant planar graph $\Gamma$ which is rich enough to allow reconstruction of $f$ from its restriction to $\Gamma$ and yet small enough so as to have minimal entropy among all such invariant graphs; see \cite{MR4423808, MR3774834, cui2024invariantgraphsjuliasets}.

The main objective of our paper is to develop a satisfactory similar theory beyond the polynomial case.  In this realm,  a central difficulty arises from the existence of obstructed twists $g_0fg_1$ of a general PCF rational map $f$, which implies that the mapping class biset of $f$ is not contracting in the sense of Nekrashevych, and which greatly complicates the analysis; see again Remark~\eqref{rem:2}.  Our work analyzes the tension between two opposing forces: the ``non-uniform contraction'' induced by lifting, and the ``additive correction'' required to ensure that the process we analyze is iterative, see~\eqref{eq:contract}. 

One of our main results, Theorem~\ref{thm:main X ray}, states that non-uniform contraction eventually dominates when $f$ has $4$ postcritical points. Its corollaries support the existence of minimal-entropy invariant graphs, but this is still open, even if the post-critical set has $3$ points. When $f$ has $4$ post-critical points, the corresponding pullback map on Teichm\"uller space is the lift to its universal cover of an analytic correspondence between Riemann surfaces. We therefore cast our arguments in this natural, more general setting.

\section{Introduction}\label{secn:intro}
An \emph{analytic covering self-correspondence} is a pair of maps
\[ F=(\phi, \rho \colon \cT\rightrightarrows\cS),\qquad\text{ simply written }\qquad F=\phi\circ \rho^{-1}\colon \cS \multimap \cS\]
between finite area hyperbolic Riemann surfaces, where $\phi\colon \cT\to\cS$ is a covering map, $\rho\colon  \cT\to\cS$ is any analytic map, and throughout this text `$\multimap$' denotes a multivalued map; see~\S\ref{secn:correspondences}.  The dynamics of such correspondences is the subject of much recent attention; see e.g.~\cites{MR1370680, MR4505392, MR1168195, MR4071411, MR4543152, MR4104599}. Of particular interest for us are examples arising as correspondences on moduli space induced by so-called Thurston maps with four postcritical points; see~\S\ref{subsec:thmap} and~\S\ref{secn:examples}.

If we endow $\cS$ with its hyperbolic metric, and pull it back via $\phi$ to $\cT$, then the map $\rho$ becomes distance non-increasing, by the Schwarz-Pick Lemma.  In many cases, $\rho$ strictly decreases distances --- that is, it is a contraction; this occurs if and only if $\rho$ is not a covering.  In this case, we call the correspondence \emph{admissible}; see~\S\ref{subsecn:admissible}.

It is natural to analyze a dynamical system through its effect on the fundamental group.  To this end, we suppose $F$ has a \emph{fixed point}, $\tilde{\star} \in \cT$.  This means 
\[ \star\coloneqq\phi (\tilde{\star})=\rho(\tilde{\star}).\]
Equivalently, and conveniently, a fixed point is an element $\star \in \cS$ together with a choice of preimage $\tilde{\star}\in \phi^{-1}(\star)$ for which $\rho(\tilde{\star})=\star$. We also fix a finite set
\begin{equation}\label{eq:genset}
  \sfX\subseteq\{\text{paths in $\cS$ that start at $\star$ and end at some element of $\rho(\phi^{-1}(\star))$}\}.
\end{equation}

We consider infinite sequences $g^{(0)}, g^{(1)}, g^{(2)}, \ldots$ that are orbits under iteration of the following one-to-finite multivalued map on the fundamental group $\pi_1(\cS, \star)$,
\begin{equation}\label{eq:X_ray}
g^{(n)} \multimap \rho\circ \phi^*(g^{(n)}).\overline{\sfx_n} \eqqcolon g^{(n+1)}.
\end{equation}
We call such an infinite sequence an \emph{$\sfX$-ray in the direction of $\star$ above $g^{(0)}$}; see Definition~\ref{def:xray}. Here and below, 
\begin{itemize}
\item $\phi^* (g^{(n)})\colon[0,1]\to\cT$ is the unique lift of $g^{(n)}$ under $\phi$ starting at the given lift $\tilde{\star}$ of the basepoint $\star$ as in the definition of fixed point; 
\item $\rho\circ \phi^*(g^{(n)})\colon[0,1]\to\cS$ is its image under $\rho$;
\item $\sfx_n\in \sfX$ is an arbitrary element of $\sfX$ which joins the basepoint $\star$ to the endpoint of $\rho\circ \phi^* (g^{(n)})$;
\item $\overline{\sfx_n}$ is the path $\sfx_n$ traversed in the opposite direction;
\item `$.$' denotes concatenation of paths.
\end{itemize}

If $M\geq \max\{d_\cS(\star, s) : s \in \rho(\phi^{-1}(\star))\}$ and $\sfX$ consists of all paths in $\cS$ that start at $\star$, end at some element of $\rho(\phi^{-1}(\star))$, and have length at most $M$, then $\sfX$ is necessarily finite, by properness of the hyperbolic metric, and any finite orbit $g^{(0)},\dots,g^{(n)}$ can be extended to an $\sfX$-ray.

For a path $\gamma$, we denote by $|\gamma|$ its hyperbolic length. This induces a proper norm $|g|$ on the group $\pi_1(\cS, \star)$.  Since $\rho$ is not a covering, then it is a contraction, so we have 
\[ |\rho\circ \phi^*(g^{(n)})|<|g^{(n)}|\]
and therefore
\begin{equation}\label{eq:contract}
  |g^{(n+1)}| \leq |\rho\circ \phi^*(g^{(n)}).\overline{\sfx_n}|<|g^{(n)}|+\max_{\sfx\in\sfX}|\sfx|.
\end{equation}
Thus, the  contraction induced by pulling back via $F$ is opposed by an additive correction of magnitude at most $\max_{\sfx\in\sfX}|\sfx|$ coming from concatenation with $\sfx\in \sfX$. Our main result is that the contraction eventually dominates:

\begin{mainthm}[Finite Attractor for $\sfX$-rays]\label{thm:main X ray}
  Assume that $F=\phi, \rho\colon \cT \rightrightarrows\cS$ is an admissible correspondence, and $\star \in \cS$ is a fixed point of $F$.  Then for every finite $\sfX$ as in~\eqref{eq:genset} there is a finite attractor $A(\sfX) \subset \pi_1(\cS,\star)$ such that for every $\sfX$-ray $g^{(0)}, g^{(1)}, g^{(2)}, \ldots$ in the direction of $\star$, the elements $g^{(n)}$ belong to $A(\sfX)$ for all $n$ sufficiently large.
\end{mainthm}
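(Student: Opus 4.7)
The plan is to upgrade the pointwise strict contraction of $\rho$ to a quantitative estimate $\ell(g^{(n+1)}) \le \lambda\,\ell(g^{(n)}) + C + M$ with $\lambda < 1$ and $C > 0$ depending only on $F$, iterate it for an eventual uniform length bound, and conclude via proper discontinuity of the action of $\pi_1(\cS,*)$ on the universal cover. For the setup I would put on $\cS$ its complete hyperbolic metric and on $\cT$ the pullback metric $\phi^{*}(\mathrm{hyp})$, which equals the hyperbolic metric on $\cT$ since $\phi$ is a covering. Then $\phi$ is a local isometry and, by Schwarz--Pick together with admissibility, the pointwise Lipschitz constant of $\rho$ in the hyperbolic metrics is strictly less than $1$ everywhere. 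Writing $\ell(g)$ for the length of the shortest based loop at $*$ in the class $g \in \pi_1(\cS,*)$ and picking a geodesic representative of $g^{(n)}$, its $\phi$-lift has the same length, $\rho$ shortens it, and concatenation with $\sfx_n$ adds at most $M$; this immediately gives the weak inequality $\ell(g^{(n+1)}) < \ell(g^{(n)}) + M$, which is not yet quantitative.

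For the quantitative step I would split the geodesic representative of $g^{(n)}$ into arcs lying in the $\varepsilon$-thick part of $\cS$ and arcs inside horocyclic cusp neighborhoods. Lifted to $\cT$, thick arcs live in a compact region, on which the pointwise hyperbolic Lipschitz constant of $\rho$ is bounded uniformly by some $\lambda_0 < 1$. Cusp arcs require more care: extending $\rho$ holomorphically over the punctures of the compactifications, each cusp of $\cT$ is either \emph{cusp-filling} (the extension sends it to a smooth point of $\widehat{\cS}$) --- in which case the pointwise Lipschitz constant tends to $0$ at the cusp and deep cusp excursions are contracted to bounded length --- or \emph{cusp-preserving} (the extension sends it to a cusp of $\cS$ with local degree $k$). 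In the cusp-preserving case, a direct computation with the local model $\rho(\tilde w) = a\tilde w^{k}(1+O(\tilde w))$ and the cusp metric $|d\tilde w|/(|\tilde w|\,\log(1/|\tilde w|))$ shows that the pointwise hyperbolic Lipschitz constant of $\rho$ tends to $1$, so pointwise contraction is lost. I would handle this combinatorially: $\rho\circ\phi^{*}$ induces a deterministic integer-valued action on cusp-wrapping numbers (a wrapping-$m$ arc around a cusp $c$ of $\cS$ lifts to arcs wrapping $m/d_i$ times around the preimage cusps $\tilde c_i$ of $\cT$, whose images under $\rho$ wrap $k_i m/d_i$ times around cusps of $\cS$, with $d_i,k_i$ the local degrees of $\phi,\rho$). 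Admissibility should force this action not to be a permutation, and combining this with the thick-part uniform contraction should yield the desired estimate.

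Iterating the estimate gives $\limsup_n \ell(g^{(n)}) \le (C+M)/(1-\lambda)$, uniformly over all initial classes and all $M$-bounded $\sfX$-rays. Since $\pi_1(\cS,*)$ acts properly discontinuously on the universal cover, only finitely many $g$ satisfy $\ell(g) \le L$ for any fixed $L$; taking $L = (C+M)/(1-\lambda) + 1$ produces the finite attractor $A(M)$. The main obstacle is precisely the cusp-preserving case above: the local pointwise hyperbolic contraction of $\rho$ vanishes at such cusps, so no purely local geometric argument can work; the strict contraction must be extracted from a global combinatorial analysis of how $F$ acts on cusp-wrapping data, together with the admissibility hypothesis. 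This is exactly where the ``non-uniform hyperbolicity'' of the title enters, and is presumably where the detailed cusp analysis developed earlier in the paper is invoked.
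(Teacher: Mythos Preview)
Your setup and the weak inequality $\ell(g^{(n+1)})<\ell(g^{(n)})+M$ are exactly right, and your diagnosis of the difficulty at cusp-preserving ends is accurate. The genuine gap is the target estimate itself: a one-step multiplicative bound $\ell(g^{(n+1)})\le\lambda\,\ell(g^{(n)})+C$ with $\lambda<1$ is in general \emph{false} for admissible correspondences, so no amount of combinatorial bookkeeping will produce it. The reason is visible in your own local model. A roundabout of winding number $m$ around a cusp has hyperbolic length $\approx 2\log m$; under $F^{-1}$ the winding number becomes $t\cdot m$ with $t=c/d$ the ratio of local degrees of $\rho$ and $\phi$, so the roundabout length changes by the \emph{additive} constant $2\log t$, not by a multiplicative factor. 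If a loop spends essentially all its length in a single roundabout, one step of lifting changes its length by $O(1)$, and there are examples (e.g.\ Lodge's map in \S\ref{secn:examples}) with branches having $t\ge1$, so the length need not even decrease in one step. Thus the correspondence is genuinely only non-uniformly hyperbolic, and your iteration scheme $\limsup_n\ell(g^{(n)})\le(C+M)/(1-\lambda)$ cannot be salvaged.

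What the paper proves instead is an $N$-step \emph{additive} contraction (Theorem~\ref{thm:main}): there exist $N,\epsilon,\kappa$ such that if $|g|>\kappa$ then some $|g^{(n)}|\le|g|-\epsilon$ with $n\le N$. This is established by contradiction: if it fails one extracts a ``very tight'' sequence $(g_m)$ whose thick-thin decompositions have a bounded number $K$ of unbounded roundabouts and uniformly bounded thick parts. The crux (Lemma~\ref{lem:tdown}) is that along such a sequence the product of cusp exponents $t_1^{(n)}\cdots t_K^{(n)}$ decays like $e^{-\eta n}$; this comes not from admissibility forcing the cusp action to be a non-permutation, but from the uniform contraction on the thick segments \emph{between} roundabouts, which after $n$ iterates forces the cusp neighborhoods at the two ends of a thick segment to overlap unless the exponents shrink. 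This forces the thick parts of $g_m^{(n)}$ to grow linearly in $n$, contradicting the uniform bound on thick parts of very tight sequences. Your combinatorial wrapping-number idea is in the right spirit, but the argument has to be run over many iterates and coupled to the thick-part contraction; it cannot be packaged as a single-step estimate.
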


\subsection{Application to Thurston theory}\label{ss:thurston} See Theorem~\ref{thm:gga} for a detailed formulation. Let $f\colon (\rs, P)\righttoleftarrow$ be a rational map, with $P$ a finite subset of $\rs$ that contains the critical values of $f$ and satisfies $f(P)\subseteq P$, a \emph{post-critically finite} map.  By taking preimages, the map $f$ induces a pullback operation on the set of \emph{multicurves}: isotopy classes of collections of pairwise disjoint and non-isotopic essential curves in $\rs-P$. Somewhat similarly, $f$ induces a multivalued map on certain sets of trees whose vertices contain $P$, and on the set of graphs arising as \emph{spines} (graphs that are deformation retracts) for $\rs-P$. Under iteration:
\begin{itemize}
\item if $T_n$ is a tree on $\rs$ with $T_n \supset P$, then $T_{n+1}$ can be chosen to be any subtree of $f^{-1}(T_n)$, pruned so that the \emph{complexity}---defined as the sum of the number of edges and vertices---remains bounded under iteration;
\item if $\Gamma_n\subset \rs-P$ is a spine, then $\Gamma_{n+1}$ can be chosen to be any subspine of $f^{-1}(\Gamma_n)$. 
\end{itemize}

If $f$ is a homeomorphism of infinite order, then it is well known that $f^{-n}(\gamma)$, for an essential closed curve, will typically get more and more complicated as $|n|$ increases --- it converges to a \emph{lamination} on $\rs\setminus P$. If $f$ has degree $\ge2$ and is expanding (for example, rational), however, there is a tension between $f^{-1}$ being contracting on the one hand, and $f^{-1}(\gamma)$ consisting of, in the worst case, $\deg(f)$ branches of $\gamma$. Our main corollary, below, shows that the contraction of $f^{-1}$ wins, at least in some settings.

If $\#P=4$, and $f$ is rational and not a so-called Latt\`es example, then the correspondence $F=\phi,\rho$ on moduli space $\cS\coloneqq\mathscr M_{0,4}=\rs - \{0,1,\infty\}$ induced by $f$ is admissible. This allows us to deduce:
\begin{maincor}\label{cor:attractor}
  Assume $\#P=4$ and that $f$ is a non-Latt\`es rational map. Then there is a finite attractor for the pullback iteration on multicurves, on trees, and on spines: there is a finite collection of such objects such that every multicurve/tree/spine, upon pulling back sufficiently many times, reaches the attractor.
\end{maincor}

We sketch in~\S\ref{ss:higher dimension} how our approach could help generalizing Corollary~\ref{cor:attractor} to arbitrarily large $P$.

We note that we have only mentioned curves in this brief subsection, but that our arguments apply equally well to graphs of given complexity (say, given number of vertices and edges). This should have useful applications to the study of graphs of minimal entropy capturing the dynamics of $f$.

\subsection*{Remarks}
\begin{enumerate}
\item The existence of a finite attractor for curves, as in Corollary~\ref{cor:attractor}, was conjectured for general $P$ by the third author in the 2000s, see~\cite{kmp:tw:published}. Its extension to graphs is natural; unlike the pullback map on curves, the pullback map on e.g. spine graphs is always non-trivial.

\item The conjecture is known in the case of post-critically finite polynomials~\cite{MR3199801,MR4510015}. In fact, for hyperbolic polynomials Equation~\eqref{eq:contract} takes the stronger form
  \begin{equation}\label{eq:lambdacontraction}
    |g^{(n+1)}|\le\frac1\lambda|g^{(n)}|+\max_{x\in\sfX}|\sfx|\quad\text{for some }\lambda>1
  \end{equation}
  with respect to the word metric, and all our results easily follow from~\eqref{eq:lambdacontraction}. On the other hand, \eqref{eq:lambdacontraction} cannot hold if a map has an obstructed twist.\label{rem:2}

\item The failure of~\eqref{eq:lambdacontraction} in the general setting motivated the first two authors to put forth~\cite[Conjecture~7.10]{MR4213769}, which can be thought of as a version of Corollary~\ref{cor:attractor} with quantified number of steps to reach the attractor.
  
\item In the case of curves, Corollary~\ref{cor:attractor} is known in several cases; see~\cite{MR4472056} for a survey. 

\item For the case of trees, the pullback relation we consider is quite similar in spirit to the ``ivy iteration'' method of~\cite{MR4068871}; their implementation finds trees invariant-up-to-isotopy under a quadratic rational map $f$.  

\item A result of Hlushchanka shows that a critically fixed rational map is determined by the planar isomorphism class of its so-called charge graph; see~\cite{Hlushchanka:2019aa}. Hlushchanka and Prochorov show, using intersection theory applied to arcs on the sphere, that an induced  pullback iteration on graphs analogous to the ones considered here is shown to converge  to the charge graph; see~\cite{Hlushchanka:2022aa}.

\item The exclusion of Latt\`es examples is necessary --- for example, the flexible Latt\`es examples fix every curve.  

\item See~\cite{MR4423808} for a survey of recent results on finding invariant graphs; its main result asserts that given any critically finite rational map $f$, there is some iterate $n$ for which $f^n$ preserves exactly some finite graph containing the postcritical set of $f$. The authors note there that one cannot always specify the precise form of this graph; e.g.~\cite{MR4423808}*{Remark~3.3(2)} shows that for $f(z)=z^2+i$, there is no Jordan curve $C$ in the sphere containing the postcritical set of $f$ such that for some iterate $n$, one has $f^n(C)\subset C$.  

\item We note the following subtlety in Corollary~\ref{cor:attractor}, namely, that for trees, the size of the attractor necessarily grows upon passing to iterates. For example, suppose $f$ is a rational map with finite post-critical set $P$ and Julia set the whole sphere, and $T\subset \rs$ is any tree with vertex set containing $P$.  Taking preimages, one obtains for each $n\in \N$ a cell structure on $\rs$.  Since $f$ is uniformly expanding with respect to a suitable orbifold length metric, the diameters of the two-cells tend to zero as $n\to\infty$. It follows that for some sufficiently large iterate $n$ depending on the choice of $T$, the tree $T$ is invariant up to isotopy under pullback via $f^n$; compare~\cite{MR3727134}*{\S15}.  Corollary~\ref{cor:attractor} implies that at least in the case $\#P=4$, only finitely many such trees $T$  are contained in periodic orbits under pullback by $f$ itself. Similar results apply to a rational map $f$ with Sierpi\'nski carpet Julia set; one first collapses its Fatou components to points to obtain a map which is expanding on the whole sphere; compare~\cite{MR3774834}.

\item We conjecture that Theorem~\ref{thm:main X ray} and Corollary~\ref{cor:attractor} hold for all post-critically finite non-Latt\`es maps; see Conjecture~\ref{conj:main}. In~\S\ref{ss:higher dimension}, we present a strategy towards its proof.
\end{enumerate}

For $P$ of general cardinality, the connection to group theory arises as follows; see~\cite{kmp:tw:published}*{\S7}. Suppose that $f$ is an arbitrary Thurston map with postcritical set $P$.  Since the set of isotopy classes of curves has finitely many orbits under the action of the pure mapping class group $G\coloneqq\text{PMod}(S^2,P)$, a sequence $C_0, C_1, C_2, \ldots$ of curves arising as an orbit under pullback by $f$ can be encoded by a a sequence of pairs $(C^{(0)}, g^{(0)}), (C^{(1)}, g^{(1)}),  (C^{(2)}, g^{(2)}), \ldots$ where the $C^{(n)}$'s are drawn from a finite set of models comprising a transversal to the action of $G$, and the $g^{(n)}$'s lie in $G$.  Thus control of the orbit of $C_0$ under taking iterated  pullbacks boils down to controlling the sequence $g^{(0)}, g^{(1)}, g^{(2)}, \ldots$, which we show forms an $\sfX$-ray.  

\begin{proof}[Outline of proof of Corollary~\ref{cor:attractor}]
  The first step is to note that the map $f$ induces an admissible correspondence on the moduli space $\cS\coloneqq\mathscr M_{0,4}=\rs - \{0,1,\infty\}$, with a distinguished fixed point $\star$ determined by $f$ since $f$ is assumed rational.

  The elements $g^{(n)}$ comprising the terms of an $\sfX$-ray are then loops in $\cS$ based at $\star$.  

  The property of being an $\sfX$-ray can be abstracted in the setting of a \emph{biset}, which is an algebraic object encoding the correspondence aforementionned.

  This biset appears in two guises, one in which elements are paths (with concatenation of paths as operation), and one in which they are Thurston maps (with composition of maps as operation); these two points of view correspond to the avatars of the fundamental group as a set of paths or a set of deck transformations.

  Working in the realm of paths, we apply Theorem~\ref{thm:main X ray} to conclude the proof.
\end{proof}

\subsection{Outline of the proof of Theorem~\ref{thm:main X ray}}\label{ss:outline}
Our main result is proven as a consequence of the following:
\begin{mainthm}\label{thm:main}
  Suppose $F=\phi, \rho\colon \cT \rightrightarrows\cS$ is an admissible correspondence, and $\star\in \cS$ is a fixed point of $F$.  Let $\star$ also denote the constant path at $\star$.  Fix a finite collection $\sfX$ of curves as in~\eqref{eq:genset}. Then there are constants $\kappa>0,N\in\N,\epsilon>0,\xi>0$ such that for every $g\in \pi_1(\cS, \star)$ and every $\sfX$-ray of loops $(g^{(n)})$ in the direction of $\star$ above $g$, we have
  \begin{enumerate}
  \item $|g^{(n+1)}|\le|g^{(n)}|+\xi$ for all $n$;
   \item if $|g|>\kappa$ then $|g^{(n)}|\le |g|-\epsilon$ for some $n\in\{0,\dots,N\}$.
  \end{enumerate}
\end{mainthm}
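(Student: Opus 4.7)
\emph{Plan.} Part (1) follows directly from the Schwarz--Pick lemma. Equipping $\cT$ with its hyperbolic metric, the covering $\phi\colon\cT\to\cS$ is a local isometry, so a length-minimizing representative $\bar g^{(n)}$ of $g^{(n)}$ lifts to a path $\phi^{*}\bar g^{(n)}$ of length $|g^{(n)}|$ in $\cT$. Schwarz--Pick then gives $|\rho\circ\phi^{*}\bar g^{(n)}|\le|g^{(n)}|$, and concatenating with $\sfx_n\in\sfX$ of length at most $\xi:=\max_{\sfx\in\sfX}|\sfx|$ produces a loop representing $g^{(n+1)}$ of length at most $|g^{(n)}|+\xi$, which establishes (1).

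Part (2) is the substantive content. The plan is to upgrade Schwarz--Pick to a uniform strict contraction on a compact thick core of $\cT$, then argue that long loops traverse this core sufficiently to produce an $\epsilon$-contraction within boundedly many iterations. Fix a Margulis-style compact thick part $K\subset\cT$ containing the fibre $\phi^{-1}(*)$. Since $\rho$ is not a covering, the Schwarz--Pick inequality is strict at every point of $\cT$; by compactness of $K$ there exists $\delta>0$ with $|\rho'(z)|\le1-\delta$ for all $z\in K$. Writing $\ell_K(\gamma)$ for the arc-length of a path $\gamma$ spent inside $K$, we obtain the deficit estimate
\[|g^{(n+1)}|\le|g^{(n)}|+\xi-\delta\,\ell_K\bigl(\phi^{*}\bar g^{(n)}\bigr),\]
so it suffices to show that whenever $|g|>\kappa$, some $n\in\{0,\dots,N\}$ satisfies $\ell_K(\phi^{*}\bar g^{(n)})\ge(\xi+\epsilon)/\delta$. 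For generic long $g$ the closed-geodesic structure forces each cusp excursion of $\phi^{*}\bar g$ to be preceded and followed by a traversal of $K$, giving a uniform lower bound on $\ell_K$ and one-step contraction.

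The main obstacle is the case where $\phi^{*}\bar g$ concentrates in the cusps of $\cT$, corresponding to $g$ being dominated by high powers of peripheral generators: there Schwarz--Pick contracts negligibly and the one-step deficit is too small. The key fact to quantify is that the $\sfX$-ray dynamics disperses such cuspidal concentrations through $K$ after a bounded number of iterations, exploiting the fact that the finite covering $\phi$ together with the non-covering $\rho$ cannot leave every peripheral subgroup of $\pi_1(\cS,*)$ invariant under the induced virtual endomorphism; iterated pullback therefore spreads peripheral weight across distinct cusps of $\cT$ and eventually onto non-peripheral content that crosses $K$. Combining this cusp-mixing with the one-step deficit estimate, and choosing $\kappa$ large enough to exclude essentially peripheral $g$ and $N$ large enough to accumulate a net contraction of at least $\epsilon$ against the additive expansion $\xi$, yields the required constants.
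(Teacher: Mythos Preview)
Your treatment of part~(1) is correct and matches the paper exactly; likewise your deficit estimate
\[
  |g^{(n+1)}|\le|g^{(n)}|+\xi-\delta\,\ell_K\bigl(\phi^{*}\bar g^{(n)}\bigr)
\]
is essentially the paper's Proposition~\ref{prop:basic}. The genuine gap is in the cuspidal case, and the mechanism you propose for it is incorrect.

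You assert that the virtual endomorphism ``cannot leave every peripheral subgroup invariant'' and that iterated pullback therefore spreads peripheral weight onto non-peripheral content crossing $K$. But this is false in examples the theorem must cover: in Lodge's example (\S\ref{secn:examples}) the set of cusps is \emph{totally invariant}, $F^{-1}(\cusps\cS)=\cusps\cS$, so a peripheral loop around a cusp lifts (up to powers and the short connector $\sfx$) to another peripheral loop around a cusp, indefinitely. No amount of iteration converts it into something that traverses the thick part substantially. Relatedly, your final sentence --- ``choosing $\kappa$ large enough to exclude essentially peripheral $g$'' --- cannot work: peripheral powers $p^k$ have $|p^k|\sim 2\log k$, so they realize arbitrarily large norm and are never excluded.

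The paper's actual mechanism is different and more delicate. It argues by contradiction: if (2) fails one extracts a \emph{tight sequence} $(g_m)$ whose thick--thin decompositions have a uniformly bounded number of roundabouts and uniformly bounded thick segments (Proposition~\ref{prop:tightfacts}, Lemma~\ref{lemma:ldvt}). One then shows that under $F^{-n}$ each roundabout around a cusp $x_i$ goes to a roundabout around a cusp $y_i$, with winding number scaled by the Puiseux exponent $t_i^{(n)}=c_i^{(n)}/d_i^{(n)}$, so that $|r_i^{(n)}|=|r_i|+2\log t_i^{(n)}+o(1)$ (Lemma~\ref{lem:tightlift}). The crux is Lemma~\ref{lem:tdown}: the bi-infinite geodesic joining consecutive cusps $x_i,x_{i+1}$ has a thick portion of definite length, and uniform contraction on \emph{that} geodesic over $n$ iterates forces $t_i^{(n)}t_{i+1}^{(n)}\le e^{-\eta n}$. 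Thus the thin parts of $g_m^{(n)}$ shrink by roughly $\eta n$; since the total length is essentially conserved along a tight sequence, the thick parts must grow without bound, contradicting the uniform bound $\lambda$. The contraction is therefore not coming from peripheral-becoming-nonperipheral, but from the quantitative decay of the local exponents $t_i^{(n)}$, which in turn is driven by thick-part contraction along the geodesics \emph{between} cusps rather than along $g$ itself. Your outline does not contain this idea, and without it the argument does not close.
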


Note that Conclusion~(1) follows easily from the definition of $\sfX$-ray and is stated only for reference, while (2) is not.  From this it will follow (Corollary~\ref{cor:contraction}) that there is a finite attractor $A\subset G$, depending on $\star$ and $\sfX$ but not on $g$, such that the terms $g^{(n)}$ of any $\sfX$-ray $g^{(0)}, g^{(1)}, g^{(2)}, \ldots$ in the direction of $\star$ eventually lie in $A$. 

Let us now outline the proof of Theorem~\ref{thm:main}. Throughout this subsection, we think of $n$ as a finite, very large number (so as to dwarf any universal constant $\mathcal O(1)$), and consider $m\to\infty$.

The proof proceeds by contradiction. Let us assume that Theorem~\ref{thm:main} does not hold. We begin by a ``soft'' preparation of the sequence of curves violating Theorem~\ref{thm:main}: there is a \emph{tight sequence} $(g_m)_m$ in $G\coloneqq  \pi_1(\cS, \star)$ with
\begin{equation}
\label{ed:dfn:tight}
 |g_m|>m\text{ and }|g_m^{(n)}|>|g_m|-1/m\qquad\text{for all }n\in\{0,\dots,m\}.
\end{equation}
In other words, for a sufficiently large $m\gg 1$, the lift $g_m \leadsto g^{(n)}_{m}$ of $g_m$ does not lose any length. Consider then the \emph{thin-thick decomposition} of $g_m$,
\begin{equation}\label{eq:g_m:decomp}
 g_m = \ell_{0,m}.r_{1,m}\ldots r_{k_m,m}. \ell_{k_m,m},
\end{equation}
in which the $r_{i,m}$, called \emph{roundabouts}, are in the thin part of $\cS$ while the $\ell_{i,m}$ are in the thick part.

Since $(g_m)_m$ is a tight sequence and $F^{-1}$ is uniformly contracting on the thick pieces $\ell_{i,m}$, they can neither be long nor numerous; in other words, we may assume $k_m$ is uniformly bounded, and $|\ell_{i,m}|=\mathcal O(1)$ for all $i$; see Proposition~\ref{prop:tightfacts}. We obtain
\begin{equation}\label{eq:pre:ignore thick}
  |g_m|=\sum_{i=1}^{k_m} |r_{i,m}|+\mathcal O(1), \qquad \sum_{i=1}^{k_m}  |r_{i,m}| \to \infty.
\end{equation}
We let $k((g_m))$ be the maximal number of roundabouts in $(g_m)_m$ whose lengths increase to infinity (with respect to all subsequences $m_t\to\infty$), see~\eqref{eq:k_g}. We then define $K<\infty$ as the maximum over all $k((g_m))$. 

Let us next impose an additional combinatorial assumption that $(g_m)_m$ is \emph{very tight}, see Definition~\ref{defn:verytight}: \emph{the number of roundabouts in $(g_m)_m$ whose lengths increase to infinity equals $K$}. For very tight sequences,~\eqref{eq:pre:ignore thick} takes the following form: by ignoring ``short'' roundabouts and for all sufficiently large $m\gg 1$, we have
\begin{equation}\label{eq:ignore thick}
  |g_m|=\sum_{i=1}^{K} |r_{i,m}|+\mathcal O(1), \qquad |r_{i,m}| \to \infty\quad  \text{for all $i$},
\end{equation}
see Lemma~\ref{lemma:ldvt}.

Since $\cS$ has dimension $1$, every roundabout $r_{i,m}$ lies in a small neighborhood of a puncture $x_{i,m}$. Denoting by $\sphericalangle  r_{i,m}$ the \emph{winding number} of $r_{i,m}$ around $x_{i,m}$, we have the following ``tropical'' \emph{log-formula}, see Lemma~\ref{lem:lengthest}:
\begin{equation}\label{eq:LogFormula}
  |r_{i,m}|=2\log_+|\sphericalangle  r_{i,m}| + \mathcal O(1). 
\end{equation} 

Let us analyze what happens under lifting by $F^{-n}$: above each puncture $x_{i,m}$, the map $F^n$ is given locally by a Puiseux series $z \mapsto c\cdot z^{t^{(n)}_{i,m}}+\cdots$. 
We consider the thick-thin decompositions of the lifts $g_m^{(n)}$. Since $(g_m)_m$ is very tight, the thin-thick decomposition persists under lifting, see~\S\ref{subsecn:liftingunder}, and we may write each $g_m^{(n)}$ in thin-thick form
\begin{equation}\label{eq:persistence}
  g_m^{(n)}=\ell_{0,m}^{(n)}.r_{1,m}^{(n)} \ldots r_{k_m,m}^{(n)}.\ell_{K,m}^{(n)},
\end{equation}
with every roundabout $r_{i,m}^{(n)}$ an $F^n$-preimage of its corresponding roundabout $r_{i,m}$. Equation~\eqref{eq:LogFormula} then gives:
\begin{equation}\label{eq:before_extraction}
\begin{aligned}
  |g_m^{(n)}|&=\sum_{i=0}^K|\ell_{i,m}^{(n)}|+\sum_{i=1}^K2\log_+|t_{i,m}^{(n)}\sphericalangle r_{i,m}|+\mathcal O(1)\\
  &=|g_m| + \sum_{i=0}^K|\ell_{i,m}^{(n)}| + \sum_{i=1}^K2\log t_{i,m}^{(n)}+\mathcal O(1)
\end{aligned}
\end{equation}
for every fixed $n$ and all sufficiently large $m \gg 1$, see Lemma~\ref{lem:tightlift}.
 
In Lemma~\ref{lemma:extraction} and its Corollary~\ref{cor:m_n}, we develop an \emph{extraction argument} to show that the total length $\sum_{i=0}^K|\ell_{i,m}^{(n)}|$ is bounded over sufficiently large $m$ and ``many'' $n$. Roughly, if one of the $\ell_{i,m}^{(n)}$ had unbounded length, then it would develop a new roundabout whose length increases to infinity, contradicting the choice of $K$. Consequently, there are sequences $(m(s)),(n(s))\to \infty$ with $n(s)\ll m(s)$ such that  
\begin{equation}\label{eq:extraction}
  \begin{aligned}
    |g_{m(s)}^{(n(s))}|&=|g_{m(s)}|+\sum_{i=1}^K2\log t_{i,{m(s)}}^{(n(s))}+\mathcal O(1)\\
    &=|g_{m(s)}|+2\log\Big(t_{1,{m(s)}}^{(n(s))}\cdots t_{K,{m(s)}}^{(n(s))}\Big)+\mathcal O(1).
  \end{aligned}
\end{equation}

Past this preparation step, we arrive at the heart of the argument, requiring precise analytic estimates. We claim (see Lemma~\ref{lem:tdown}) that the local degrees $t_{i,m}^{(n)}$ satisfy
\begin{equation}\label{eqn:tdecay}
  \begin{aligned}
    t^{(n)}_{1,m}&=\mathcal O(\exp(-\eta n)),\\
    t^{(n)}_{i,m}t^{(n)}_{i+1,m}&=\mathcal O(\exp(-\eta n))\text{ for all }i\in\{1,\dots,K-1\},
  \end{aligned}
\end{equation}
for some $\eta>0$. Consider indeed two consecutive roundabouts, and the part of $g_m$ between them. We compare that curve to the (infinite) geodesic connecting the punctures $x_{i,m}$ and $x_{i+1,m}$, see the curve $p$ on Figure~\ref{fig:homotopy} for a visualization. Because of its thick part, each lift of $p$ under $F$ loses a definite amount (proportional to $\eta$) of length, so its lift under $F^n$ shrinks by a linear function of $n$. This translates back to the claimed estimate on local degrees of $F^n$.

Multiplying an appropriate subset (depending on the parity of $K$) of the inequalities in~\eqref{eqn:tdecay}, we arrive at $t^{(n)}_{1,m}\cdots t^{(n)}_{K,m}=\mathcal O(\exp(-\eta n))$ so~\eqref{eq:extraction} implies $|g_m^{(n)}|\le |g_m| - 2\eta n + \mathcal O(1)$. This is our desired contradiction.

The key point of the argument is that the linear loss of length $-\eta n$ in~\eqref{eqn:tdecay} dominates the constant error term $\mathcal O(1)$ in~\eqref{eq:extraction} coming from the thick parts $\ell_{i,m}^{(n)}$ and the log-formula~\eqref{eq:LogFormula}.

\subsection{Strategy towards an extension to higher dimension}\label{ss:higher dimension}
We believe that Theorem~\ref{thm:main X ray} extends to some higher-dimensional correspondences of interest; most significantly to correspondences controlling various algebraic properties of postcritically finite rational maps:
\begin{mainconj}\label{conj:main}
  Let $f\colon(S^2,P)\righttoleftarrow$ be a Thurston map, and let $F$ be its associated modular correspondence on $\mathscr M_{0,d}=\Moduli(S^2, P)$, as in~\eqref{eq:modcorr}. Then Theorem~\ref{thm:main X ray} holds with respect to any fixed point $\star$ of $F$.
\end{mainconj}
This conjecture implies Corollary~\ref{cor:attractor} for the rational map associated with $\star$.

We elaborate below on the strategy towards proving Conjecture~\ref{conj:main}, based on the arguments outlined in~\S\ref{ss:outline}. We expect that the setting of tight and very tight sequences together with the Extraction Argument is applicable, see~\eqref{eq:strategy:g}; a key challenge is to establish a variant of~\eqref{eqn:tdecay}. We anticipate that an appropriate decomposition theory for generalized roundabouts would resolve the involved difficulties.

Let us consider a tight sequence $(g_m)_m$ in $G\coloneqq  \pi_1(\mathscr M_{0,d}, \star)$ as in~\eqref{ed:dfn:tight}; we write $g_{m}\colon [0,1]\to \mathscr M_{0,d}$. For every $s\in[0,1]$, the point $g_{m}(s)$ represents a marked Riemann sphere $\tau_{m,s}=(S^2,P_{g_m(s)})$ and we denote by  $\widetilde{\mathscr C}_{m,s}$ the multicurve (possibly empty) representing the Thin-Thick decomposition of $\tau_{m,s}$.

Since $g_m(s)$ is a tight sequence, lifting  via $F^{-n}\colon g_m(s)\leadsto g_m^{(n)}(s)$ almost preserves the infinitesimal length of $g_m$ for most $s$. This infinitesimal length is described by the quadratic differential $q_{m,s}\equiv q_{m,s} (z)dz^2$ on $\tau_{m,s}$ dual to $\dot g_{m}$; the induced map $F^n\colon \tau_{m,s}\to\tau^{(n)}_{m,s}$ should almost preserve $q_{m,s}$. It is shown in~\cite{HSS} that $q_{m,s}$ can be decomposed relatively to the multicurve $\widetilde{\mathscr C}_{m,s}$ on $\tau_{m,s}$. It is a \emph{meta-principle} that, in this decomposition, most of the mass of $q_{m,s}$ should be either within fat annuli (thin parts of the decomposition) or within the small spheres with a non-hyperbolic orbifold (namely, on which the induced map has degree $1$ or is doubly covered by a torus endomorphism). We expect that there is a well-defined minimal submulticurve $\mathscr C_{m,s}\subset \widetilde{\mathscr C}_{m,s}$ such that the decomposition of  $(\tau_{m,s}, {\mathscr C}_{m,s})$ combines all small spheres of  $(\tau_{m,s}, \widetilde{\mathscr C}_{m,s})$ with a non-parabolic orbifold that ``can be combined''; compare with the Levy multicurve from~\cite{BD4}. Finally, we \emph{conjecture} that there is a decomposition of $g_m$ into \emph{generalized roundabouts} $r_{i,m}$
\begin{equation}
\label{eq:strategy:g}
g_m=\ell_{0,m}. r_{1,m}\dots r_{K,m}.\ell_{K,m}\hspace{2cm}\text{similar to~\eqref{eq:g_m:decomp},}
\end{equation}
where $r_{i,m}$ are maximal subcurves of the $g_{m}$ so that $\mathscr C_{m,s}$ is non-empty and depends continuously (i.e., is stable) on $s$ within $r_{i,m}$. The curves $\ell_{i,m}$ may or may not be within a compact subpart of $\mathscr M_{0,d}$; however, we expect that the $F^n\colon g_m\leadsto g_m^{(n)}$ have a definite contraction at the $\ell_{i,m}$, i.e., a non-quantified variant of~\eqref{eqn:tdecay} holds at the $\ell_{i,m}$. Namely, for $s$ in $\ell_{i,m}$, the multicurve ${\mathscr C}_{m,s}$ changes within small spheres with a hyperbolic orbifold and thus $\ell_{i,m}$ is either within a compact part of  $\mathscr M_{0,d}$ or is close to its strata (a certain boundary region of the Weil-Petersson compactification) where $F^n$ has definite contraction. An extension of $F$ to the Weil-Petersson compactification follows from~\cite{selinger:pullback}. (The above definition of $r_{i,m}$ should be taken as guidance and may be subject to necessary adjustments.)

Assuming~\eqref{eq:strategy:g}, a variant of~\eqref{eq:before_extraction} and~\eqref{eq:extraction} is provided by Minsky's Product Region Theorem~\cite{MR1390649} as follows. Let us suppose that the decomposition of $(\tau_{i,m}, \mathscr C_{i,m})$ is represented by ``small'' punctured spheres $S_1,\dots,S_{k'}$ connected by ``fat'' annuli $A_1,\dots,A_{k''}$ in a tree-like fashion. Let us consider the projections $r'_{i,m,j}$ and $r''_{i,m,j}$ of $r_{i,m}$ to  to the factors corresponding to the small spheres and wide annuli respectively; then
\begin{equation}\label{eq:minsky}
  |r_{i,m}|=\max\Big[\max_{j=1}^{k'}|r'_{i,m,j}|,\ \  \max_{j=1}^{k''}|r''_{i,m,j}|\Big]+\mathcal O(1).
\end{equation}
Note that the lengths $|r''_{i,m,j}|$ are given by Fenchel-Nielsen coordinates, and are naturally measured in copies of $\mathbb H$. If the maximum is realized by the projections $r''_i$ to annuli, then a log-formula equivalent to~\eqref{eq:LogFormula} applies. The local degrees $t_i^{(n)}$ are to be replaced by Thurston matrices $\Theta^{(n)}_{i,m}$ recording the degrees by which the different annuli map to each other. This should lead to the estimate similar to~\eqref{eq:extraction}:
\[|g^{(n)}_{i,m}|=|g_{i,m}|+\sum_{i=1}^{K} 2\log\|\Theta_{i,m}^{(n)}\|_{i,m}+\mathcal O(1),\]
where the norm $\|\Theta_{i,m}^{(n)}\|_{i,m}$ comes from~\eqref{eq:minsky}. We expect that an analogue of Estimates~\eqref{eqn:tdecay} can be derived following the lines of Lemma~\ref{lem:tdown}; this would provide the desired conclusion.

As it seems to us, a key challenge is when the maximum is realized by the projections $r'_i$ in small spheres. We anticipate that a reduction to a lower-dimensional problem can be achieved, by better understanding the $r'_i$, the interactions between them and their neighbours along $g$, and the back-and-forth between parameter and dynamical spaces. The theory of decompositions, already well developed in~\cites{MR2020454,MR4213769}, should prove helpful.

\subsection{Table of contents}
\begin{enumerate}
\item[\S\ref{secn:correspondences}] gives background on correspondences, with a subsection on bisets.
\item[\S\ref{subsec:thmap}] describes how Thurston maps induce correspondences on moduli space, with examples in~\S\ref{secn:examples}.
\item[\S\ref{secn:hyperbolic}] develops needed background in hyperbolic geometry and applies this to correspondences on Riemann surfaces.
\item[\S\ref{secn:xrays}] introduces $\sfX$-rays.
\item[\S\ref{secn:proof}] proves Theorem~\ref{thm:main}.
\item[\S\ref{ss:gga}] shows how certain pullback orbits of graphs may be modeled by $\sfX$-rays; it concludes with the proof of Corollary~\ref{cor:attractor}.
\end{enumerate}

\subsection{Conventions and notation}
\begin{itemize}
\item Absolute constants are written $\mathcal O(1)$, those that are functions of $n$ but of no other variable are written $\mathcal O_n(1)$, and constants that vanish as $n\to\infty$ are written $o(1)_{n\to\infty}$. Quantities $A,B$ whose ratio $A/B$ is bounded as $n\to\infty$ are written $A\ll_n B$, or just $A\ll B$ if the variable tending to infinity is different from $n$ and obvious from the context.
\item Composition of functions $f\circ g$ means $g$ is applied first. 
\item Concatenation of paths is denoted $\alpha.\beta$, with $\alpha$ traversed first. 
\item A smooth map $f$ between Riemannian manifolds is a \emph{contraction} if $\|Df\|<1$ everywhere, where $\|Df\|$ is the operator norm of its derivative.  A smooth map is a \emph{uniform contraction} on a subset $K$ if for some $0 \leq c < 1$ we have $\|Df\|<c<1$ at all points of $K$. In particular, the restriction of a contraction to a compact subset is a uniform contraction.
\item For a rectifiable path $\gamma$ in a hyperbolic surface, we denote by $|\gamma|$ its hyperbolic length; and if $g$ denotes a homotopy class of paths then $|g|$ denotes the minimal hyperbolic length of a representative of $g$. A path obtained from $\gamma$ by removing bounded segments at its extremities is written $\gamma-\mathcal O(1)$.
\item Isotopy of maps, and in particular of paths, is written $\simeq$. Usually the context makes it obvious relative to what the isotopies should be considered.
\end{itemize}

\subsection{Acknowledgments} We are grateful to Mikhail Hlushchanka and Curtis McMullen for valuable feedback. 

Dzmitry Dudko is supported by NSF grant DMS-2055532.
Laurent Bartholdi is supported by SNSF grant TMAG-2\_216487/1.
Kevin M. Pilgrim is supported by Simons Collaboration Grant 615022 and NSF grant DMS-2302907, and  also acknowledges support from the Institute for Mathematical Sciences at SUNY Stony Brook and the Max-Planck Instit\"ut f\"ur Mathematik-Bonn.  All authors acknowledge support from the ERC AdG grant 101097307 and Saarland University, which supported scientific visits during which much of this article was developed.

\section{Correspondences}\label{secn:correspondences}

\subsection{Terminology}
\begin{enumerate}
\item A \emph{topological self-correspondence} is a pair of continuous maps $\phi, \rho\colon \cT \rightrightarrows\cS$ of topological spaces.  We think of it as a multivalued map $F\colon \cS \multimap \cS$ given by $F=\phi \circ \rho^{-1}$. The $n$th iterate of $F$ in this ``forward'' direction will be denoted by $F^{(n)}$.  
\item A \emph{covering correspondence} is a topological self-correspondence for which $\phi$ is a finite covering map. 
\end{enumerate}

A remark about directionality is in order. For a covering correspondence $F$, local branches of $F^{-1}$ exist, so it might seem more natural to define $F$ going in the other direction. Indeed, there are interesting examples where $\rho$ is constant, making $\rho^{-1}$ rather pathological~\cite{MR2508269}. However, the simplest examples are when $\rho$ is injective, such as in the case of the Rabbit Example in~\S\ref{secn:examples} below.  Also, with this convention of direction, $F$ has the path-lifting property. 

\begin{enumerate}
\setcounter{enumi}{2}
\item If $\phi$ and $\rho$ are both continuous, both analytic, etc. we use the adjective  \emph{topological}, \emph{analytic}, etc.\ to describe the correspondence.
\item A \emph{backward orbit} of $s\in\cS$ is a sequence $t_0, t_1,\ldots,t_n\in \cT$ with $\phi(t_0)=s$ and $\rho(t_i)=\phi(t_{i+1})$ for all $i=0,\dots,n-1$.  Abusing notation, suppressing the fact that the $t_n$'s are part of the definition, it is a sequence $s_0, s_1,\ldots,s_n\in \cS$ with $F(s_{i+1})=s_i$ for all $i=0,\dots,n-1$.
\item The \emph{backward orbit} $F^{-\N}(s_0)$ of $s_0\in \cS$ is the set of all backward orbits of $s_0$.
\item A \emph{fixed point} of a self-correspondence is a point $\tilde{\star}\in \cT$ such that $\phi(\tilde{\star})=\rho(\tilde{\star})$.  Equivalently, a fixed point is  a point $\star\in \cS$ together with a choice of $\tilde{\star}\in \phi^{-1}(\star)$ satisfying $\rho(\tilde{\star})=\star$. We again abuse notation and refer to $\star$ as a fixed point of the self-correspondence if such a $\tilde\star$ exists, with the understanding that a chosen lift $\tilde{\star}$ is part of the data. 
\end{enumerate}

If $\phi, \rho\colon \cT \rightrightarrows\cS$ is a covering correspondence, then any path $\gamma$ in $\cS$ can be lifted under $\phi$, uniquely once a preimage of $\gamma(0)$ under $\phi$ has been chosen; and then mapped by $\rho$ to obtain another path in $\cS$. If a choice of preimage of $\gamma(0)$ is available from the context, we refer to this new path as $F^*(\gamma)$.

\subsection{Admissible complex correspondences}\label{subsecn:admissible}
Our main results concern analytic correspondences on Riemann surfaces satisfying a type of ``critical finiteness'' property that is strictly weaker than that defined in~\cite{MR1168195}.  In our setting, cusps also play a distinguished role. Thus in order to distinguish our setting from that of others, we encapsulate these properties into the notion of an ``admissible complex correspondence'', which we now define precisely. Suppose that
\begin{enumerate}
\item $\widehat{\cT}, \widehat{\cS}$ are compact Riemann surfaces (and in particular are connected);
\item $\cusps\cT, \cusps\cS$ are finite subsets of $\widehat{\cT}$ and $\widehat{\cS}$, respectively;
\item $\cT\coloneqq\widehat{\cT}\setminus \cusps\cT$, $\cS\coloneqq\widehat{\cS}\setminus \cusps\cS$, so that now the sets $\cusps{\cT}, \cusps{\cS}$ become the ends of $\cT$ and $\cS$, respectively;
\item $\cT$ and $\cS$ are hyperbolic Riemann surfaces; 
\item $\phi\colon \cT\to\cS$ is a analytic covering map of finite degree $D$ larger than one; 
\item $\rho\colon \cT\to\cS$ is analytic and is not a covering map.
\end{enumerate}

We refer to a correspondence $\phi, \rho\colon \cT \rightrightarrows\cS$ satisfying (1)-(6) above as an \emph{admissible complex correspondence}.  

\subsection{Properties}
An admissible complex  correspondence satisfies several properties.
\begin{enumerate}
\item The maps $\phi, \rho$ extend over their sets of ends to analytic maps (which we denote by the same symbols) $\phi, \rho\colon \widehat{\cT}\to\widehat{\cS}$, so $\phi(\cusps{\cT}) \subseteq \cusps{\cS}$ and $\rho^{-1}(\cusps{\cS}) \subseteq \cusps{\cT}$.
\item The multivalued map $F\coloneqq\phi \circ \rho^{-1}$ has the property that its \emph{forward post-critical set} 
\[\bigcup_{n \geq 0} F^{(n)}(\{\text{branch values of }\phi\})\]
is contained in the (sometimes larger) finite set $\cusps{\cS}$.
\item  the map $\rho\colon \cT\to\cS$ is a    contraction: if we give $\cT,\cS$ the hyperbolic metric with constant curvature $-1$, then $\|d\rho(t)\|_{\cT, \cS}<1$ at all points $t\in \cT$; in particular, every periodic point of the multivalued map $F\colon \cS \multimap \cS$ given by $F=\phi\circ\rho^{-1}$ is repelling. Thus, each admissible complex correspondence is \emph{weakly hyperbolic}, in the sense that $\|d\rho(t)\|_{\cT, \cS}<1$ at all points $t\in \cT$.

\item We have $\deg(\rho) < \deg(\phi)$, by the following argument due to Walter Parry. By removing a set of area zero from $\cS$, we may find a simply-connected subset $U\subseteq\cS$ above which $\rho$ is a trivial covering with preimages $U_i$ for $i=1,\ldots,\deg(\rho)$. Then
  \begin{align*}
    \deg(\phi)\text{area}(\cS)&=\text{area}(\cT)=\text{area}(U_1)+\cdots +\text{area}(U_{\deg(\rho)})\\&>\deg(\rho)\text{area}(U)
    =\deg(\rho)\text{area}(\cS),
  \end{align*}
  since $\rho$ is a strict contraction.
  
\item Combined with~\cite{MR3667901}*{Theorem 5.3}, the previous Property (4) implies that the number of points of period $n$ grows exponentially with $n$. 

\item Given $s_0\in \cS$, the accumulation set $\text{acc}(F^{-\N}(s_0)) \subset \overline{\cS}$ of its    backward orbit (defined as the set of all points $s\in \overline{\cS}$ such that there exists a backward orbit of $s_0$ accumulating at $s$) is nonempty, by compactness of $\overline{\cS}$.   It is also independent of $s_0$.

  To see this, let $s_0,s_1,\dots$ be a backward orbit of $s$. Suppose first $s\in \text{acc}(F^{-\N}(s_0)) \cap \cS$.  Pick some other $s_0'\in \cS$.  Join $s_0$ to $s_0'$ by a path of $\cS $-hyperbolic length say $L$.  By weak hyperbolicity and path-lifting, there is a backward orbit $s_0', s_1', s_2', \ldots$ such that the hyperbolic distances between $s_n, s_n'$ satisfy $|s_n-s_n'|<L$ for each $n$. If some subsequence $s_{n_k}\to s\in \cS$ then since $\|dF^{-1}\|_{\cS, \cS} < \lambda < 1$ in e.g.\ a $2L$-neighborhood of $s$ we have $|s_{n_k}-s_{n_k}'|\to 0$. If some subsequence $s_{n_k}\to s\in \cusps{\cS}$ then we also have $s_{n_k'}\to s$ since hyperbolic balls about $s_{n_k}$ of fixed radius $2L$ become smaller and smaller in any compatible metric on $\overline{\cS}$ as $s_{n_k}\to s$. A similar argument treats the case when $s\in \cusps{\cS}$.

Thus the \emph{limit set} $\Lambda(\phi, \rho\colon \cT \rightrightarrows\cS) \subset \overline{\cS}$, defined as $\text{acc}(F^{-\N}(s_0))$ for some (equivalently, any) $s_0\in \cS$ is well-defined, and backward-invariant in the sense that $F^{-1}(\Lambda)=\Lambda$.  

\item Measure-theoretically, the dynamics of an admissible complex correspondence is similar to that of a rational function.\footnote{We thank M. Londhe for useful conversations on this topic.}  It is known~\cite{MR4104599} that for each choice of $s_0$ outside a polar (``small'', in particular, of Lebesgue measure zero) set of exceptional points, the sequence of atomic measures $\mu_n\coloneqq D^{-n}\sum_{a\in F^{(-n)}(s_0)}\delta_a$ converges to a measure $\mu$.  The measure $\mu$ is independent of the choice of $s_0$, is supported on the closure of the limit set as defined above, and assigns mass zero to any polar subset of $\cS$.

In particular, the limit set is a nonempty, uncountable subset of $\cS$.  

\item An analytic correspondence can naturally be iterated: via functorial pullbacks, for each $n\in \N$ there is a Riemann surface $\cT^{(n)}$,  a finite cover $\phi^{(n)}\colon \cT^{(n)}\to\cS$,  and a analytic map $\rho^{(n)}\colon \cT^{(n)}\to\cS$.  We obtain in this manner a multivalued map $F^{(n)}\colon \cS \multimap \cS$ given locally by $\phi^{(n)}\circ (\rho^{(n)})^{-1}$.  Every fixed point for $F$ yields a fixed point for $F^{(n)}$. 

\item Most of the examples that we are aware of are of the following type: $\cS=\rs\setminus\{0,1,\infty\}$, and $\cT$ is a finite-type surface. There are then finitely many admissible correspondences of given degree, and they are determined by group-theoretical data (the deck group of $\phi$ and the map induced by $\rho$ on fundamental groups). See for example the following correspondence~\cite{MR4213769}*{Figure~4}:
  \[\begin{tikzpicture}
  \gdef\pathSR{(160:3) .. controls (180:2.6) .. (200:3)}
  \gdef\pathRS{(-40:3) .. controls (-15:2) and (15:2) .. (40:3)}
  \gdef\surfacelabels{
    \foreach \a/\x/\y/\z/\t in {0/B/C/F/D,120/A/B/D/E,240/C/A/E/F} {
      \draw[rotate=\a] node at (-20:2.5) {$\x$};
      \draw[rotate=\a] node at (20:2.5) {$\y$};
      \draw[rotate=\a] node at (172:3.0) {$\z$};
      \draw[rotate=\a] node at (188:3.0) {$\t$};
    }
  }
  \begin{scope}
  \surfacelabels
  \gdef\surfacecoords{
  \foreach \a/\i in {0/0,120/1,240/2} {
    \begin{scope}[rotate=\a]
      \path[name path=v0] \pathSR coordinate (R\i);
      \path[name path=w0] (0,0) -- (-3,0);

      \path[name path=v1] \pathRS coordinate (S\i);
      \path[name path=w1] (0,0) -- (3,0);
      \path[name intersections={of=v0 and w0,by=p0},name intersections={of=v1 and w1,by=p1}] (p0) coordinate (P\i) -- (p1) coordinate (Q\i);

      \path[name path=w2] (0,0) -- (30:3);
      \path[name intersections={of=v1 and w2,by=p2}] (p2) coordinate (T\i);
      \path[name path=w2] (0,0) -- (-30:3);
      \path[name intersections={of=v1 and w2,by=p2}] (p2) coordinate (U\i);
    \end{scope}
  }
  \foreach \a/\i/\j in {0/0/1,120/1/2,240/2/0} {
    \begin{scope}[rotate=\a]
      \gdef\pathTU{(T\i) .. controls (60:1.8) .. (U\j)}
      \path[name path=v0] \pathTU;
      \path[name path=w0] (0,0) -- (60:3);
      \path[name intersections={of=v0 and w0,by=p0}] (p0) coordinate (V\i);
    \end{scope}
  }  
  }
  \surfacecoords
  \gdef\surfaceleft{
  \foreach \a/\i/\j/\k in {0/0/1/2,120/1/2/0,240/2/0/1} {
    \begin{scope}[rotate=\a]
      \gdef\pathQV{(Q\i) .. controls (30:1.5) .. (V\i)}
      \clip (0,0) -- \pathQV -- cycle;
      \fill[gray!20] (0,0) -- (T\i) -- (V\i) -- cycle;
    \end{scope}
    \begin{scope}[rotate=\a]
      \gdef\pathVQ{(V\i) .. controls (90:1.5) .. (Q\j)}
      \clip (0,0) -- \pathVQ -- cycle;
      \fill[gray!20] (0,0) -- (U\j) -- (Q\j) -- cycle;
    \end{scope}
    \begin{scope}[rotate=\a]
      \clip (0,0) -- \pathRS -- cycle;
      \clip (S\i) -- \pathQV -- cycle;
      \fill[gray!20] (0,0) -- (T\i) -- (Q\i) -- cycle;
    \end{scope}
    \begin{scope}[rotate=\a]
      \gdef\pathVTa{(V\i) .. controls (50:1.8) and (40:1.7) .. (T\i)}
      \clip (0,0) -- \pathTU -- cycle;
      \fill[gray!20] (S\i) -- \pathVTa -- cycle;
    \end{scope}
    \begin{scope}[rotate=\a]
      \gdef\pathVUa{(V\i) .. controls (70:1.8) and (80:1.7) .. (U\j)}
      \clip (0,0) -- \pathVUa -- cycle;
      \fill[gray!20] \pathVQ -- (U\j) -- cycle;
    \end{scope}
    \begin{scope}[rotate=\a]
      \gdef\pathVUb{(V\i) .. controls (75:2.3) .. (U\j)}
      \clip (0,0) -- \pathVUb -- cycle;
      \fill[gray!20] (P\k) -- \pathTU -- cycle;
    \end{scope}
    \begin{scope}[rotate=\a]
      \gdef\pathVTb{(V\i) .. controls (45:2.3) .. (T\i)}
      \clip (P\k) -- \pathVTb -- cycle;
      \fill[gray!20] (0,0) -- (V\i) -- (S\i) -- (Q\i) -- cycle;
    \end{scope}
    \begin{scope}[rotate=\a]
      \clip (V\j) -- \pathSR -- cycle;
      \fill[gray!20] (T\j) -- (P\i) -- (S\j) -- cycle;
    \end{scope}
    \begin{scope}[rotate=\a]
      \clip (V\j) -- \pathSR -- cycle;
      \fill[gray!20] (V\j) -- (P\i) -- (U\k) -- cycle;
    \end{scope}
    \begin{scope}[rotate=\a]
      \clip (V\k) -- \pathRS -- cycle;
      \fill[gray!20] (U\i) -- (P\j) -- (R\j) -- cycle;
    \end{scope}
  }
  \foreach \a/\i/\j/\k/\c in {0/0/1/2/,120/1/2/0/,240/2/0/1/} {
    \begin{scope}[rotate=\a]
      \draw \pathRS;
      \draw \pathSR;
      \draw (P\i) -- (Q\i);
      \draw (0,0) -- \pathTU -- cycle;
      \draw \pathVUa;
      \draw \pathVUb;
      \draw \pathVTa;
      \draw \pathVTb;
      \draw \pathVQ;
      \draw \pathQV;
      \draw (V\i) -- (R\k);
      \draw (V\i) -- (S\i);
      \draw (U\j) -- (P\k);
      \draw (T\i) -- (P\k);
      \fill (42:2.1) circle (0.06);
      \fill [\c] (78:2.1) circle (0.06);
    \end{scope}
  }
  \node at (0.4,0.1) {\small $\infty$};

  \node[right] at (U0) {\small $\infty$};
  \node[right] at (Q0) {\small $0$};
  \node[right] at (T0) {\small $\infty$};
  \node[above right=-1mm] at (S0) {\small $0$};
  \node[above] at (P2) {\small $\infty$};
  \node[above left=-1mm] at (R2) {\small $0$};
  \node[above left=-1mm] at (U1) {\small $\infty$};
  \node[above left] at (Q1) {\small $0$};
  \node[above] at (T1) {\small $\infty$};
  \node[left] at (S1) {\small $0$};
  \node[left] at (P0) {\small $\infty$};
  \node[left] at (R0) {\small $0$};
  \node[below] at (U2) {\small $\infty$};
  \node[below left] at (Q2) {\small $0$};
  \node[below left=-1mm] at (T2) {\small $\infty$};
  \node[below left=-1mm] at (S2) {\small $0$};
  \node[below] at (P1) {\small $\infty$};
  \node[below right=-1mm] at (R1) {\small $0$};
  }
  \surfaceleft
\end{scope}

\begin{scope}[shift={(6.5,0)}]
  \surfacecoords
  \foreach \a/\i/\j/\k in {0/0/1/2,120/1/2/0,240/2/0/1} {
    \begin{scope}[rotate=\a]
      \clip (0,0) -- \pathTU -- cycle;
      \fill[gray!20] (S\i) -- \pathVTa -- cycle;
    \end{scope}
    \begin{scope}[rotate=\a]
      \clip (0,0) -- \pathTU -- cycle;
      \fill[gray!20] (R\k) -- \pathVUa -- cycle;
    \end{scope}
    \begin{scope}[rotate=\a]
      \gdef\pathVTd{(V\i) .. controls (48:2.6) .. (T\i)}
      \clip (0,0) -- \pathVTd -- cycle;
      \fill[gray!20] (P\k) -- \pathVTb -- (S\i) -- cycle;
    \end{scope}
    \begin{scope}[rotate=\a]
      \gdef\pathVUd{(V\i) .. controls (72:2.6) .. (U\j)}
      \clip (0,0) -- \pathVUd -- cycle;
      \fill[gray!20] (P\k) -- \pathVUb -- (R\k) -- cycle;
    \end{scope}
  }
  \gdef\pathVUc{(V\i) .. controls (95:1.4) .. (U\j)}
  \gdef\pathVTc{(V\i) .. controls (25:1.4) .. (T\i)}
  \begin{scope}
    \foreach \a/\i/\j/\k in {0/0/1/2,120/1/2/0,240/2/0/1} {
      \clip[rotate=\a] \pathVUc -- (T\j) -- (U\k) -- (T\k) -- (U\i) -- (T\i) -- cycle;
      \clip[rotate=\a] \pathVTc -- (U\i) -- (T\k) -- (U\k) -- (T\j) -- (U\j) -- cycle;
      \clip[rotate=\a] \pathRS -- (R\k) -- (S\j) -- (R\i) -- (S\k) -- cycle;
    }
    \fill[gray!20] (0,0) circle (3);
  \end{scope}
  \foreach \a/\i/\j/\k/\c in {0/0/1/2/,120/1/2/0/,240/2/0/1/} {
    \begin{scope}[rotate=\a]
      \draw \pathRS;
      \draw \pathSR;
      \draw \pathTU;
      \draw \pathVUa;
      \draw \pathVUb;
      \draw \pathVUc;
      \draw \pathVUd;
      \draw \pathVTa;
      \draw \pathVTb;
      \draw \pathVTc;
      \draw \pathVTd;
      \fill (42:2.1) circle (0.06);
      \fill [\c] (78:2.1) circle (0.06);
      \path[name path=v0] \pathVQ; \path[name path=w0] (0,0) -- (U\j);
      \fill[name intersections={of=v0 and w0,by=p0}] (p0) circle (0.06);
      \path[name path=v0] \pathQV; \path[name path=w0] (0,0) -- (T\i);
      \fill[name intersections={of=v0 and w0,by=p0}] (p0) circle (0.06);
      \path[name path=v0] (U\j) -- (P\k); \path[name path=w0] (R\k) -- (V\i);
      \fill[name intersections={of=v0 and w0,by=p0}] (p0) circle (0.06);
      \path[name path=v0] (T\i) -- (P\k); \path[name path=w0] (S\i) -- (V\i);
      \fill[name intersections={of=v0 and w0,by=p0}] (p0) circle (0.06);
      \node at (Q\i) {$\times$};
      \node at (P\i) {$\times$};
      \node at (S\i) {$\times$};
      \node at (R\i) {$\times$};
      \node at (0:1.1) {$\times$};
    \end{scope}
  }
  \node at (0,0) {$\times$};
  \node[above right] at (0,0) {$\overline{\zeta_6}$};

  \node[right] at (U0) {\small $\infty$};
  \node[right] at (Q0) {$\overline{\zeta_6}$};
  \node[right] at (T0) {\small $1$};
  \node[above right=-1mm] at (S0) {$\zeta_6$};
  \node[above] at (P2) {$\zeta_6$};
  \node[above left=-1mm] at (R2) {$\zeta_6$};
  \node[above left=-1mm] at (U1) {\small $0$};
  \node[above left] at (Q1) {$\overline{\zeta_6}$};
  \node[above] at (T1) {\small $\infty$};
  \node[left] at (S1) {$\zeta_6$};
  \node[left] at (P0) {$\zeta_6$};
  \node[left] at (R0) {$\zeta_6$};
  \node[below] at (U2) {\small $1$};
  \node[below left] at (Q2) {$\overline{\zeta_6}$};
  \node[below left=-1mm] at (T2) {\small $0$};
  \node[below left=-1mm] at (S2) {$\zeta_6$};
  \node[below] at (P1) {$\zeta_6$};
  \node[below right=-1mm] at (R1) {$\zeta_6$};

  \node[above right] at (25:1.17) {\small $0$};
  \node[above right] at (95:1.17) {\small $1$};
  \node[left] at (145:1.17) {\small $1$};
  \node[left] at (215:1.17) {\small $\infty$};
  \node[below right] at (265:1.17) {\small $\infty$};
  \node[below right] at (335:1.17) {\small $0$};
  \node[above right] at (V0) {\small $\infty$};
  \node[left=1mm] at (V1) {\small $0$};
  \node[below right] at (V2) {\small $1$};
\end{scope}

\begin{scope}[shift={(3.8,-3.5)}]
  \begin{scope}
    \clip (-1,0) rectangle (1,-1);
    \fill[gray!20] (0,0) circle (1);
  \end{scope}
  \draw (0,0) circle (1);
  \draw (-1,0) -- (1,0);
  \fill (-0.7,0) circle (0.06) node[above] {$0$};
  \fill (0,0) circle (0.06) node[above] {$1$};
  \fill (0.7,0) circle (0.06) node[above] {$\infty$};
\end{scope}

\draw[<->,very thick] (2.4,0) -- (3.3,0);
\draw[->,very thick] (1.7,-2.6) -- node[below left=-1mm] {$30:1$} (2.5,-3.2);
\draw[->,very thick] (5.7,-2.2) -- node[below right=-1mm] {$16:1$} (4.9,-3.0);

\end{tikzpicture}
\]
  A wealth of high-genus examples appear in the context of nearly Euclidean maps, see~\S\ref{secn:examples}.\label{item:8}
  
\item All correspondences in the setting of~\eqref{item:8} can be composed, by taking fibre products, and also lead in this manner to arbitrarily high genus examples.
  
\end{enumerate}

\subsection{Dynamical regularity} 
Suppose $\phi, \rho\colon \cT \rightrightarrows\cS$ is an admissible complex correspondence. By definition, it satisfies the weak hyperbolicity property.  If the limit set $\Lambda$ is a nonempty compact subset of $\cS$, we say that $\phi, \rho\colon \cT \rightrightarrows\cS$ is \emph{uniformly hyperbolic}.  Equivalently: there is a nonempty compact subset $K \subset \cS$ for which $F^{-1}(K) \subset K$. Between the weak and uniform hyperbolic correspondences, we single out the \emph{strongly subhyperbolic} ones: those for which there exists a complete length metric on the compact space $\overline{\cS}$ such that $F^{-1}$ uniformly contracts lengths of curves. 

\subsection{Bisets}\label{subsecn:introbisets}
In this section, we recast the notion of $\sfX$-ray using the natural, but perhaps less familiar to dynamicists, algebraic language of bisets associated to correspondences.

We draw heavily from~\cite{MR3781419}. For groups $G,H$, an \emph{$H$-$G$-biset} is a set equipped with a left $H$-action and a right $G$-action that commute.  Bisets generalize the homomorphism on fundamental group induced by a continuous map: given pointed path-connected topological spaces $(\cT,\star')$ and $(\cS,\star)$, any map $\psi\colon \cT\to\cS$, \emph{that need not preserve basepoints}, gives rise to a $\pi_1(\cT,\star')$-$\pi_1(\cS,\star)$-biset
\[B(\psi)=\{\gamma\colon[0,1]\to\cS:\gamma(0)=\psi(\star'),\gamma(1)=\star\}/{\simeq},\]
where `$\simeq$' denotes homotopy with respect to the endpoints. The actions are $[\beta]\cdot[\gamma]\cdot[\alpha]=[(\psi\circ\beta).\gamma.\alpha]$, for based loops $\alpha,\beta$ in $(\cS,\star),(\cT,\star')$ respectively.

We shall not recall the general definition of the biset of a correspondence $F=\phi,\rho:\cT\rightrightarrows\cS$, but restrict to the case in which $\phi$ is a covering, when (see~\cite{MR3781419}*{Lemma~4.4}) it may be defined by
\[B(F)=\{(\gamma\colon[0,1]\to\cS,p'\in \cT):\gamma(0)=\phi(p')=\star,\gamma(1)=\rho(p')\}/{\simeq},\]
again with `$\simeq$' denoting homotopy that preserves the relations indicated between $\gamma$ and $p'$. It is a $G$-$G$-biset for the group $G=\pi_1(\cS,\star)$. The left action is by pre-catenation, and the right action of $[\alpha]$ on $[\gamma,p']$ is computing by lifting $\alpha$ to a path $\widetilde\alpha\coloneqq\phi^*(\alpha)$ starting at $p'$ and setting $[\gamma,p']\cdot[\alpha]=[\gamma.(\rho\circ\widetilde\alpha),\widetilde\alpha(1)]$.

In particular, $B(F)$ is \emph{left-free}: there is a subset $\sfX$ of $B(F)$ such that, when only considering the left action, $B(F)\cong G\times \sfX$.  We call such $\sfX$ a \emph{basis} for the biset; it is of course not unique. We have $|\sfX|=\deg(\phi)$, and $\sfX$ will be of the form $\sfX=\{(\sfx_{p'},p'):p'\in \phi^{-1}(\star)\}$ for some choice of paths $\sfx_{p'}$ from $\star$ to $\rho(p')$ for every possible $p'\in \phi^{-1}(\star)$.

Given an $H$-$G$-biset $B$ and a $K$-$H$-biset $C$, their \emph{composition} is the biset
\[C\otimes_H B=\frac{C\times B}{(c h,b)=(c,h b)\;\forall c\in C,h\in H,b\in B},\]
The assignment of bisets to topological correspondences is functorial under composition, hence iteration. If $B$ is a $G-G$ biset, so are the products $B^{\otimes n}$ for $n\in \N$.  The element $\sfb_1 \otimes \cdots \otimes \sfb_n\in B^{\otimes n}$ is represented by a concatenation of paths $\undertilde{b}_1.\undertilde{b}_2.\ldots .\undertilde{b}_n$ of paths, with $\undertilde{b}_1$ based at $\star$, where $F^{(i-1)}(\undertilde{b}_i)=\sfb_i\in B$, $i=1, \ldots, n$, and where each $\undertilde{b}_i$ starts at the endpoint of $\undertilde{b}_{i-1}$.  The actions of $G$ are again by pre-concatenation (which is free), and by lifting under $F^{(n)}$ and post-concatenation. If $\sfX$ is a basis of $B$ then $\sfX^n$, the set of words of length $n$ in the alphabet $\sfX$, is a basis for $B^{\otimes n}$. 

\subsection{Examples: post-critically finite rational maps}\label{subsecn:pcf}
A post-critically finite rational map $f\colon(\rs,P)\righttoleftarrow$ may be viewed as an admissible complex correspondence, with $\cT=\rs\setminus f^{-1}(P)$ and $\cS=\rs\setminus P$ and $\phi,\rho\colon\cT\rightrightarrows\cS$ induced respectively by $f$ and the identity. Our main result covers their dynamics. However, our proof of Corollary~\ref{cor:attractor} relies on an interplay between the dynamical plane of such a correspondence and an associated correspondence on moduli space. This is the main topic of~\S\ref{subsec:thmap}.  

\section{Thurston maps and correspondences on moduli space}\label{subsec:thmap}
Fix a subset $P \subset S^2$ with $\#P\geq 4$, and identify $S^2$ with the Riemann sphere $\rs$.  Here, the restriction on the cardinality is to eliminate mention of uninteresting special cases.  Suppose $f\colon S^2\to S^2$ is a non-Latt\`es Thurston map with postcritical set contained in $P$ where $f(P) \subset P$. The moduli space $\Moduli(S^2, P)$ is defined to be the set of injections $\iota\colon P \hookrightarrow \rs$ modulo the action of $\Aut(\rs)$ by post-composition; it comes with a basepoint $\star$ represented by the inclusion $P \hookrightarrow \rs$. Teichm\"uller space $\Teich(S^2,P)$ is defined to be the universal cover of moduli space; it too comes with a basepoint $\star$ represented by the identity map $S^2\to\rs$.  Equivalently, it is the space of marked conformal structures on $(S^2, P)$ up to isotopy relative to $P$.  The pure mapping class group $\text{PMod}(S^2, P)$ is the group of deck transformations of the universal covering map $\pi\colon\Teich(S^2,P)\to\Moduli(S^2,P)$ and is canonically identified with the fundamental group $\pi_1(\Moduli(S^2, P),\star)$.  We set $G\coloneqq\Mod(S^2, P)=\pi_1(\Moduli(S^2, P),\star)$. 

Via pullback of complex structures, $f$ induces a self-map $\sigma_f\colon \Teich(S^2, P)\to\Teich(S^2,P)$ of Teichm\"uller space, which lies above an algebraic covering self-correspondence of moduli space in the sense that we have the following diagram 
\begin{equation}\label{eq:modcorr}
  \begin{tikzcd}
  \Teich(S^2,P) \arrow[dd,"\pi"]\arrow[rr,"\sigma_f"] \arrow[dr,"\omega"] & & 
  \Teich(S^2, P) \arrow[dd,"\pi"] \\
  &\mathcal{T}\arrow[dl,"\phi" above] \arrow[dr,"\rho"]\\
  \Moduli(S^2, P)=\cS & & \cS=\Moduli(S^2, P). 
\end{tikzcd}
\end{equation}
The intermediate cover $\cT$ is $\Teich(S^2,P)/H_f$, for the subgroup $H_f<G$ of index $\deg(\phi)$ defined (see~\cite{kmp:kps}) as
\begin{equation}\label{eq:H_f}
  H_f=\{h | \exists \tilde{h}\in G, h\circ f \simeq f\circ \tilde{h}\}.
\end{equation}
We obtain a correspondence $\phi, \rho\colon \cT \rightrightarrows\cS$ with $\phi,\rho$ defined respectively by factoring $\pi=\phi\circ\omega$ and noting that $\pi\circ\sigma_f$ descends to a map $\rho$ on $\cT$. Furthermore, following the construction, we note that the correspondence associated with $f$ is isomorphic to the correspondence associated with $\eta_1\circ f\circ \eta_0$, for any homeomorphisms $\eta_0,\eta_1\in\text{Homeo}(S^2,P)$.

This is not a coincidence. Recall from~\S\ref{subsecn:introbisets} that the correspondence $\phi,\rho\colon\cT\rightrightarrows\cS$ may be encoded by a set $B=B^{\text{paths}}$ endowed with two commuting actions of $G=\pi_1(\cS,\star)$. The group $G$ is naturally identified with the modular group $\text{PMod}(S^2, P)$. Viewing $G$ as a group of isotopy classes of self-maps of $(S^2,P)$, we define
\[ B^{\text{maps}}\coloneqq\{\eta_1 \circ f \circ \eta_0 | \eta_0, \eta_1\in\text{Homeo}(S^2,P)\}/\text{isotopy relative to $P$},\]
and note that $B^{\text{maps}}$ is naturally a $G$-$G$-biset, with actions induced by pre- and post-composition. The action by pre-composition is free with $\deg(\phi)$ orbits. The identification $\Mod(S^2, P)=G=\pi_1(\Moduli(S^2,P))$ yields an identification $B^{\text{maps}}=B^{\text{paths}}$, see~\cite{MR4213769}*{Theorem~9.1}, as long as we act on $B^{\text{maps}}$ in \emph{algebraic} order: $g_0\cdot f\cdot g_1$ is represented by $\eta_1\circ f\circ\eta_0$, for any representatives $\eta_i$ of $g_i$:
\begin{prop}\label{prop:identify}
  The $(G,G)$-bisets $B^{\text{maps}}$ and $B^{\text{paths}}$ are canonically isomorphic.\qed
\end{prop}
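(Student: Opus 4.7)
The plan is to construct an explicit bijection $\Phi\colon B^{\text{maps}}\to B^{\text{paths}}$ using the Thurston pullback in Teichm\"uller space, and verify it is $G$-$G$-equivariant. Given $h = g_0\circ f\circ g_1\in B^{\text{maps}}$, the isotopy class of $h$ rel $P$ determines a Thurston pullback map $\sigma_h\colon\Teich(S^2,P)\to\Teich(S^2,P)$, the analog for $h$ of $\sigma_f$ in the diagram of \S\ref{subsec:thmap}. Because $\Teich$ is contractible, any two paths from the basepoint $*\in\Teich$ to $\sigma_h(*)$ are homotopic rel endpoints, so the projection under $\pi$ of any such path yields a well-defined homotopy class $\Phi(h)$ of paths in $\cS$. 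Checking that $\Phi(h)\in B^{\text{paths}}$ amounts to observing that the identities $\phi\circ\omega_h=\pi$ and $\pi\circ\sigma_h=\rho\circ\omega_h$ hold for every Hurwitz-class representative (with the same $\phi,\rho$), since the correspondence $F$ on moduli depends only on the pure modular Hurwitz class. In particular, $\pi(\sigma_h(*))=\rho(\omega_h(*))\in\rho(\phi^{-1}(*))=F^{-1}(*)$, and well-definedness of $\Phi$ on isotopy classes of maps is immediate from the isotopy-invariance of $\sigma_h$.

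To check biset equivariance, I would use that $G$ acts on $\Teich$ by deck transformations of $\pi$ and trace through the definitions. Post-composition satisfies $\sigma_{g_0\circ h}(*)=g_0\cdot\sigma_h(*)$ (the deck action on the target of the pullback), so prepending a $\pi$-lift of a loop representing $g_0$ to a path in $\Teich$ from $*$ to $\sigma_h(*)$ yields a representative for $\Phi(g_0\circ h)$ whose projection is $g_0.\Phi(h)$, matching the left concatenation action on $B^{\text{paths}}$. For the right action, pre-composition $h\mapsto h\circ g_1$ reorganizes the source marking in a manner that, when projected through $\pi$ and tracked via the lift $\omega$, appends to $\Phi(h)$ the $\rho$-image of the $\phi$-lift of the loop $g_1$ starting at $\omega(*)=\tilde*$, producing exactly $\Phi(h).\widetilde{g_1}[\tilde*]$ as required.

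The remaining task is bijectivity. Injectivity follows from the rigidity of the Thurston pullback: the pair $(\sigma_h(*),\omega_h(*))$ determines the Hurwitz-class representative up to isotopy rel $P$. Surjectivity can be established by an orbit count---both bisets decompose into $G$-orbits (for the free action on each side) indexed bijectively by the fiber $F^{-1}(*)=\rho(\phi^{-1}(*))$, so $G$-equivariance of $\Phi$ forces it to be a bijection on each orbit. The main obstacle I expect is careful bookkeeping of conventions for left versus right actions: each biset carries two ``natural'' composition-like actions, and the canonical identification pairs the post-composition action of $B^{\text{maps}}$ with the pre-concatenation action of $B^{\text{paths}}$ and vice versa, so verifying equivariance requires a consistent choice of sign conventions throughout. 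These conventions are worked out in~\cite{MR4213769}*{Theorem~9.1}, from whose more general framework the present identification follows.
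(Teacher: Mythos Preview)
The paper does not prove this proposition: it merely states the result and cites \cite{MR4213769}*{Theorem~9.1} for the proof. Your proposal therefore goes well beyond what the paper does, and since you end by invoking the same reference, there is no meaningful discrepancy in approach to report.

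One small correction to your sketch: in the surjectivity argument you say the left-orbits of each biset are indexed by $F^{-1}(*)=\rho(\phi^{-1}(*))$. This is not quite right when $\rho$ fails to be injective on $\phi^{-1}(*)$. As the appendix (\S\ref{ss:bisets}) makes explicit, an element of $B^{\text{paths}}=B(F)$ is really a pair $(\gamma,p')$ with $p'\in\phi^{-1}(*)$ and $\gamma$ a path from $*$ to $\rho(p')$; the left-orbits are thus indexed by $\phi^{-1}(*)$, of cardinality $\deg(\phi)=[G:H_f]$, matching the count for $B^{\text{maps}}$. With that adjustment your orbit-counting argument goes through.
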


For a Thurston map $f$, conjugacy up to isotopy relative to $P$ is identified with conjugacy in the biset: the Thurston map represented by $g \cdot f \cdot g^{-1}$ is conjugate up to isotopy to $f$ via $g$. 

It follows that the map $f$ is conjugate up to isotopy to a rational function if and only if $\sigma_f$ has a fixed point $\theta_f$ in $\Teich(S^2, P)$, and, equivalently, if the correspondence above has a fixed point $\tilde{\star}\coloneqq\omega(\theta_f)\in\cT$ with $\phi(\tilde\star)=\rho(\tilde\star)=\star\in\cS$.

\subsection{Examples of maps with four postcritical points}\label{secn:examples}
Let us detail some maps with $\#P=4$.  In this setting, we may identify $\Teich(S^2, P)$ with the upper-half plane $\IH$, the moduli space with a triply-punctured sphere, e.g.\ $\cS\coloneqq\rs \setminus \{0,1,\infty\}$, and $\pi$ with the well-known modular function so that its deck group becomes the principal congruence subgroup $\Gamma(2)\coloneqq\{ A\in \mathrm{PSL}_2(\R) : A \equiv 1 \pmod 2\}$ acting on $\IH$.

\subsubsection*{Rabbit polynomial}  When $f$ is the so-called \emph{rabbit} complex polynomial $f(z)=z^2+c$, whose critical point $z=0$ is periodic of period $3$ and $\Im(z)>0$, after suitable normalizations the correspondence becomes $F(x)=1-\frac{1}{x^2}$, a single-valued post-critically finite hyperbolic rational function with three postcritical points lying in a common superattracting three-cycle $0 \mapsto\infty \mapsto 1 \mapsto 0$. The correspondence on moduli space is therefore uniformly hyperbolic. This example is studied in detail in~\cite{bartholdi:nekrashevych:twisted}.

\subsubsection*{Dendrite polynomial} When $f$ is the so-called \emph{dendrite} complex polynomial $f(z)=z^2+i$, whose critical point $z=0$ is preperiodic with preperiod $1$ and period $2$, after suitable normalizations the correspondence becomes $F(x)=(-1+2/x)^2$, a post-critically finite rational function with Julia set equal to the whole sphere. The orbifold of this map in the sense of Douady-Hubbard~\cite{MR1251582} turns out to be Euclidean with signature $(2, 4, 4)$. The corresponding Euclidean length orbifold metric on $\cS$ shows that this correspondence on moduli space is strongly subhyperbolic, and thus uniformly expanding on the entire completion of moduli space with respect to this metric. This is also studied in detail in~\cite{bartholdi:nekrashevych:twisted}.

\subsubsection*{Lodge's example} When $f(z)=\frac{3z^2}{2z^3+1}$ is the example featured in the thesis of R. Lodge~\cites{MR3063048, MR2508269}, the correspondence on moduli space may be described as follows. Let $\omega\coloneqq\exp(2\pi i /3)$ denote a cube root of unity.  We have  $\cS\coloneqq\rs \setminus \{1, \omega, \overline{\omega}\}$, $\cT\coloneqq\rs \setminus \{\pm 1, \pm \omega, \pm \overline{\omega}\}$, $\phi(t)=t(t^3+2)/(2t^3+1)$, and $\rho(t)=t^2$. 

This example has several interesting features. The set of ends $\cusps{\cS}$ is totally invariant: $F^{-1}(\cusps\cS) = \cusps{\cS}$.  The maps $\phi$ and $\rho$ are surjective and are Galois branched covers; $s=0$ is a super-attracting fixed point of $F^{-1}$.  A direct computation shows that there are two branches of $F$ at the fixed-cusp $s=1$: one attracting, and one repelling. Thus this correspondence is likely not subhyperbolic with respect to any straightforward generalization of such a definition from single-valued maps to correspondences. 

\subsubsection*{Nearly Euclidean Thurston maps}  This family of maps provides a wealth of examples, though few with explicit formulas. When $\deg(f)=5$ there are examples where the genus of $\cT$ is equal to $1$; when $\deg(f)=7$ one finds examples of genus $4$, etc.; the genera are observed to grow as the degree increases. See~\cite{kmp:origami} for a survey.  

\subsubsection*{Critically fixed maps} Thurston maps each of whose critical points are fixed have been completely classified~\cites{kmp:fixed, Hlushchanka:2022aa, Hlushchanka:2019aa}. The correspondences on moduli space depend only on the set of local degrees at the critical points.  When they have four postcritical points, their admissible complex correspondences on moduli space are particularly tractable. We present two examples below. 

\subsubsection*{Critically fixed quintic with local degrees $2, 3, 3, 4$}  Up to planar isomorphism any planar graph with four vertices of valences $1, 2, 2, 3$ is connected.  There are two possibilities: (a) a triangle together with one additional edge joining a vertex of the triangle to a vertex of valence one, and (b) a segment of length three, with a non-middle edge doubled.  Given either of these graphs, there is up to analytic conjugacy a unique degree five critically fixed rational map $f$ obtained by ``blowing up'' its edges.
\[\begin{tikzpicture}[every node/.style={inner sep=0mm}]
    \node (a1) at (0:1) {$\bullet$};
    \node (a2) at (120:1) {$\bullet$};
    \node (a3) at (240:1) {$\bullet$};
    \node (a4) at (0:2.7) {$\bullet$};
    \draw (a4) -- (a1) -- (a2) -- (a3) -- (a1);
    \node at (1.4,-0.3) {(a)};
    
    \node (b1) at (4,0) {$\bullet$};
    \node (b2) at (5.7,0) {$\bullet$};
    \node (b3) at (7.4,0) {$\bullet$};
    \node (b4) at (9.1,0) {$\bullet$};
    \draw (b1) edge[bend left=20] (b2) edge[bend right=20] (b2)
    (b2) -- (b3) -- (b4);
    \node at (6.55,-0.3) {(b)};
  \end{tikzpicture}\]
Simple algebra (aided, obviously, by machine) yields the following correspondence on moduli space as the one induced by $f$.  Set $\widehat{\cT}=\widehat{\cS}=\widehat{\C}$, and define
\begin{align*} \phi(t) &=\frac{(t-3)^2 (5 t+3)^4 (5 t^2+18 t-3)}{331776 t^3},\\
  \rho(t)&=\frac{-5 t^2+12 t+9}{24 t},
\end{align*}
reaching $\cS=\widehat{\C}\setminus\{0,1,\infty\}$ from $\cT=\phi^{-1}(\cS)=\widehat{\C}\setminus\{0,\infty,\pm 3/5, \pm 3, \frac{-9\pm \sqrt{6}}{5}, \frac{9\pm 4\sqrt{6}}{5}\}$. The quadratic map $\rho$ has critical points at $\pm 3i/\sqrt{5}$, so is not a cover, and so this correspondence is admissible. The multivalued map $F\coloneqq\phi \circ \rho^{-1}$ has the property that $y \in F^{-1}(y)$ for each $y\in \{0,1,\infty\}$. Moreover, each branch of $F$ at such a fixed cusp $y$ is super-attracting.  For example, in the forward direction of $F$, we have $\rho^{-1}(\infty)=\{0,\infty\}$ both with local degree one, while $\phi$ has degree $3$ at $0$ and $5$ at $\infty$; in the case of the fixed cusps $0$ and $1$, the branches of $F$ in the forward direction have local degrees respectively $2$ and $4$. 

By B\"ottcher's theorem, the forward dynamics of $F$ near each point $0,1,\infty$ is analytically conjugate to $w \mapsto w^k$ near the origin for some $k \geq 2$.  It follows that the complement $K$ of sufficiently small open disks centered around $0,1,\infty$, round in B\"ottcher coordinates, has the property that $F^{-1}(K) \subset K$. 

This implies that this correspondence is \emph{uniformly hyperbolic}, see Definition~\ref{defn:uniformly hyperbolic}: its limit set is contained in $K$, hence is compact.

 \subsubsection*{Critically fixed cubic with local degrees $2,2,2,2$} Let $f$ be the cubic critically fixed Thurston map obtained by blowing up the edges of a graph which is the disjoint union of two edges. No twist $g_1\circ f\circ g_0$ of $f$ can be rational, by the analytic Fixed Point Index Formula~\cite{milnor:dynamics}.  This observation suggests that the admissible complex correspondence associated to $f$ need not have a fixed point.  We now verify this by explicit calculation. 

Define a correspondence with $\widehat{\cT}=\widehat{\cS}=\widehat{\C}$ as follows. Set 
\begin{align*}
  \rho(t)&=\frac{-1+2t+3t^2}{4t},\\
  \phi(t)&=\frac{(1+t)(-1+3t)^3}{16t},
\end{align*}
reaching $\cS=\widehat{\C}\setminus\{0,1,\infty\}$ from $\cT=\phi^{-1}(\cS)=\widehat{\C}\setminus\{0,1,\infty,\pm 1/3, \pm 1\}$. The quadratic map $\rho$ has critical points at $\pm i/\sqrt{3}$, so is not a cover, and so this correspondence is admissible. The multivalued map $F=\phi\circ \rho^{-1}$ has a single-valued branch near the fixed point at infinity whose dynamical germ is, by B\"ottcher's theorem, analytically conjugate to $w \mapsto w^3$ near the origin.  However, direct calculation shows that the only fixed points are at the ideal points $0$ and $1$, which do not lie in $\cS$. 

There is a repelling $2$-cycle of $F$ at $\pm \sqrt{5}/3$, with multiplier $9/4$. 

\section{Hyperbolic geometry}\label{secn:hyperbolic}
In this section, we assume that $\cS$ is a hyperbolic Riemann surface which is conformally isomorphic to the complement of a finite nonempty subset $\cusps{\cS}$ contained in a compact surface $\widehat{\cS}$, and $\cS$ is equipped with its hyperbolic metric of curvature $-1$. Choose a basepoint $\star\in \cS$, and write $G\coloneqq\pi_1(\cS, \star)$.  Fix a universal cover $\pi\colon (\IH, \tilde{\star})\to (\cS, \star)$. For a rectifiable path $\gamma$ in $\cS$, we recall that $|\gamma|$ denotes its hyperbolic length.

\subsection{Hyperbolic metric on $G$}
Consider $g\in G$. The geodesic joining $\tilde{\star}$ to $g \cdot \tilde{\star}$ projects to a closed curve on $\cS$ which, upon removing $\star$, is an open geodesic segment.  While a closed curve, we emphasize that this is typically not a closed geodesic; there is typically an angle formed at $\star$.  Abusing notation, we will often denote this geodesic representative by the same symbol, $g$.  We denote by $|g|$ the hyperbolic length of this segment.  This defines the \emph{hyperbolic norm} on $G$.  Since the covering group acts properly discontinuously, the hyperbolic norm on $G$ is proper: for any constant $L>0$, the set of $g\in G$ with $|g|<L$ is finite. 

Caution is in order: though we have used the term ``norm'', the hyperbolic norm on $G$ is not bi-Lipschitz equivalent to a word metric norm induced by a finite generating set. For example, suppose $g$ is a primitive peripheral loop about some cusp of $\cS$, and consider the sequence of powers, $g^n$.  The word norms of $g^n$ grow linearly, while the hyperbolic norms of $g^n$ grow logarithmically.

\subsection{Cusp neighborhoods}\label{subsecn:nbhds}
Consider a cusp $x$ of $\cS$.  For a cusp circumference parameter $\delta>0$ sufficiently small, there is a neighborhood $B(x,\delta)$ of $x$ isometric to $\{z\in\IH:\Im(z)>1/\delta\}/\langle z \mapsto z+1\rangle$.  Note that the length of the bounding horocyclic curve $\partial B(x,\delta)$ is the parameter $\delta$. 

\subsection{$\delta$-thick-thin decompositions}\label{subsecn:deltathickthin}
We next formulate a version of a thick-thin decomposition of a Riemann surface, adapted to our setting. We are not interested in short simple closed geodesics, so our formulation focuses on cusps. 
\begin{prop}[$(\delta,\zeta)$-thick-thin decomposition for pointed surfaces]\label{prop:thickthinsurface}
There exists a \emph{cusp circumference parameter} $0<\delta<1$ small, and a \emph{separation parameter} $\zeta\gg1$ large, with the following properties.
\begin{enumerate}
\item for each $x\in \cusps{\cS}$, there is a cusp neighbourhood $B_x\coloneqq B(x,\delta)$ of $x$ isometric to $\{z\in\IH:\Im(z)>1/\delta\}/\langle z \mapsto z+1\rangle$, and the collection $\{B_x: x\in \cusps{\cS}\}$ have pairwise disjoint closures;
\item for each $x\in \cusps{\cS}$, the $\zeta$-neighborhood of $B_x$ is again a cusp neighborhood; 
\item for any two distinct $x_1, x_2\in \cusps{S}$, the closed $\zeta$-neighborhoods of $B_{x_1}, B_{x_2}$ are disjoint; 
\item for each $x\in \cusps{\cS}$, the distance from $B_x$ to the basepoint $\star$ is at least $\zeta$; 
\item $\delta$ is much shorter than the length of the smallest closed geodesic on $\cS$ (this isn't really necessary, but it helps for fixing intuition and justifying the terminology). 
\end{enumerate}
\end{prop}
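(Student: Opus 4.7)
The plan is to use the standard structure theory of hyperbolic surfaces of finite area and to choose the two parameters in the correct order, namely: fix $\zeta$ first (arbitrarily large), then shrink $\delta$ to force all five conditions to hold simultaneously.

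First I would recall the basic horocyclic neighborhood theory. For each cusp $x\in\cusps{\cS}$, there is a maximal cusp-circumference $\delta_x^{\max}>0$ (determined by Shimizu's lemma, or equivalently by the action of the corresponding parabolic subgroup on the universal cover) such that $B(x,\delta)$ is an embedded horocyclic neighborhood of $x$ for every $\delta\le\delta_x^{\max}$. For $\delta$ smaller than some universal $\delta_0>0$ depending only on $\cS$, the collection $\{B(x,\delta):x\in\cusps{\cS}\}$ is simultaneously embedded with pairwise disjoint closures. A direct computation in the upper half-plane model shows that the hyperbolic $\zeta$-neighborhood of $B(x,\delta)=\{\Im z>1/\delta\}/\langle z\mapsto z+1\rangle$ equals $B(x,\delta e^{\zeta})=\{\Im z>e^{-\zeta}/\delta\}/\langle z\mapsto z+1\rangle$. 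This single formula is the heart of the argument.

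Next I would fix $\zeta$ to be any sufficiently large constant (its value will only be constrained later by the $\sfX$-ray framework; for this Proposition any choice suffices). With $\zeta$ fixed, I shrink $\delta$ to satisfy all five conditions in turn:
\begin{enumerate}
\item[(a)] Impose $\delta<\delta_0$ and $\delta<\ell_{\min}$, where $\ell_{\min}$ is the length of the shortest closed geodesic on $\cS$; this secures (1) and (5).
\item[(b)] Impose $\delta e^{\zeta}<\min_{x}\delta_x^{\max}$; by the horoball formula above, the $\zeta$-neighborhood of each $B_x$ is then the embedded cusp neighborhood $B(x,\delta e^{\zeta})$, giving (2).
\item[(c)] The complement $K_\delta\coloneqq\cS\setminus\bigcup_{x}B(x,\delta)$ is compact, and as $\delta\to 0$ each $B(x,\delta)$ recedes into its respective cusp. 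Consequently both $\mathrm{dist}(B(x_1,\delta),B(x_2,\delta))$ (for distinct cusps $x_1,x_2$) and $\mathrm{dist}(B(x,\delta),*)$ tend to $+\infty$. Shrink $\delta$ further so that the former exceeds $2\zeta$ (yielding (3) by the triangle inequality applied to $\zeta$-neighborhoods) and the latter exceeds $\zeta$ (yielding (4)).
\end{enumerate}
Taking $\delta$ to be the minimum of the thresholds produced above gives compatible parameters.

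There is no real obstacle here; the only calculation of substance is the upper-half-plane identity $N_\zeta(B(x,\delta))=B(x,\delta e^{\zeta})$, together with the elementary but important observation that distances from a fixed compact set (respectively from another fixed cusp) to a shrinking horocyclic neighborhood blow up as the cusp-circumference parameter tends to zero. The proof is therefore essentially a careful bookkeeping of parameter choices, carried out in the order ``$\zeta$ first, then $\delta$''.
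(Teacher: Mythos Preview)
Your argument is correct. The paper itself omits the proof entirely, stating only ``The proof is elementary, and we omit it.'' Your write-up supplies exactly the sort of elementary verification the authors had in mind: the horoball identity $N_\zeta(B(x,\delta))=B(x,\delta e^{\zeta})$ together with the monotone divergence of distances to shrinking horoballs, organized by fixing $\zeta$ first and then shrinking $\delta$.
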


The proof is elementary, and we omit it. 

\begin{prop}[$(\delta,\zeta)$-thick-thin decomposition for pointed loops]\label{prop:thickthinloop}
Given the setup of Proposition~\ref{prop:thickthinsurface}, for each $g\in G$ there exist a possibly empty sequence of cusps $x_1, x_2, \ldots, x_k$ such that the geodesic representative of $g$ decomposes uniquely into a concatenation of geodesic segments
\[g=\ell_0.r_1.\ell_1\ldots r_k.\ell_k,\]
with the $r_i$ maximal segments of $g$ entirely contained in the cusp neighborhoods $B(x_i,\delta)$, and the $\ell_i$ in their complement. In particular, $|\ell_i|\ge\zeta$ for $0\le i\le k$.  
\end{prop}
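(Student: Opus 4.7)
The plan is to build the decomposition directly from the geodesic's visits to the cusp neighborhoods, and then verify the length bound on each $\ell_i$ by reading off the separation properties (1)--(4) of Proposition~\ref{prop:thickthinsurface}. The only delicate case is when two consecutive excursions land in the same cusp neighborhood, where one must pass to the universal cover.

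Let $\gamma\colon[0,1]\to\cS$ denote the geodesic representative of $g$, a piecewise geodesic from $*$ to $*$ with possibly a corner at $*$. Since the $B_x$ are pairwise disjoint (property~(1)), the open set $\gamma^{-1}\bigl(\bigcup_{x\in\cusps\cS}B_x\bigr)$ is a disjoint union of finitely many open intervals $(a_1,b_1),\dots,(a_k,b_k)\subset[0,1]$, and each maps connectedly into a single $B(x_i,\delta)$. Declaring $r_i\coloneqq\gamma|_{[a_i,b_i]}$ and letting the $\ell_i$ be the complementary closed segments yields the desired decomposition, which is unique by the maximality of the $r_i$.

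To check the length bound: the segment $\ell_0$ joins $*$ to $\partial B_{x_1}$, so property~(4) gives $|\ell_0|\geq d(*,B_{x_1})\geq\zeta$, and the same argument at the other endpoint yields $|\ell_k|\geq\zeta$. For an intermediate $\ell_i$ running from $\partial B_{x_i}$ to $\partial B_{x_{i+1}}$ with $x_i\neq x_{i+1}$, property~(3) gives $d(B_{x_i},B_{x_{i+1}})\geq\zeta$ and hence $|\ell_i|\geq\zeta$.

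The main obstacle is the case $x_i=x_{i+1}$, in which $\gamma$ exits and immediately re-enters the same cusp neighborhood. To handle it, lift $\gamma$ to a hyperbolic geodesic $\widetilde\gamma\subset\IH$. The preimage of $B_{x_i}$ is a disjoint union of hyperbolically convex horoballs, and a geodesic can intersect each such horoball in at most one connected arc. Consecutive excursions $r_i$ and $r_{i+1}$ therefore correspond to \emph{distinct} horoball lifts $H$ and $H'$ of $B_{x_i}$. Property~(2), which asserts that the $\zeta$-neighborhood of $B_{x_i}$ in $\cS$ is again an embedded cusp neighborhood, translates upstairs into pairwise disjointness of the $\zeta$-neighborhoods of distinct horoball lifts of $B_{x_i}$; this forces $d_\IH(H,H')\geq 2\zeta$. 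Since $\ell_i$ lifts to a geodesic arc from $\partial H$ to $\partial H'$, we conclude $|\ell_i|\geq 2\zeta\geq\zeta$.
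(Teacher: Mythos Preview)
Your argument is correct and is essentially a fleshed-out version of the paper's one-line justification. The paper merely remarks that the bound $|\ell_i|\ge\zeta$ holds ``because for each $x\in\cusps{\cS}$, the cusp neighborhood $B(x,\delta)$ is a convex subset of its $\zeta$-neighborhood''; your lift to $\IH$ and observation that consecutive roundabouts must land in distinct horoball lifts (separated by at least $2\zeta$ via property~(2)) is exactly how one unpacks that convexity statement, and your use of properties~(3) and~(4) for the remaining cases is what the paper leaves implicit.
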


The last conclusion holds because for each $x\in \cusps{\cS}$, the cusp neighborhood $B(x,\delta)$ is a convex subset of its $\zeta$-neighborhood. 

We call the $\ell_i$ \emph{thick segments} since they lie in the thick part.  We call the $r_i$ \emph{roundabouts} since when $|r_i|$ is large (the case in which we will be most interested, later), they wind around the corresponding cusp.  

\subsection{Winding numbers}\label{subsecn:windingnumbers}
We continue, assuming that we are in the setup of~\S\ref{subsecn:deltathickthin}.  Consider $g\in G$ and its thick-thin decomposition $g=\ell_0.r_1.\ell_1\ldots r_k.\ell_k$.

A cusp neighborhood $B(x,\delta)$ is convex; a geodesic $r$ joining two points on its boundary winds around the cusp some number of times.  To define this winding number formally, consider the universal cover $\pi\colon \IH\to\cS$ that maps $\sqrt{-1}\infty$ to the cusp, and let $H_\delta\coloneqq\{\Im(\tau)>1/\delta\}$ be the corresponding horoball as in Proposition~\ref{prop:thickthinsurface}. A lift of $r$ enters $H_\delta$ at a point $a+\sqrt{-1}/\delta$, $a\in \R$, and then exits $H_\delta$ at some other point $a+\sphericalangle(r)+\sqrt{-1}/\delta$.  We call $\sphericalangle(r)$ the \emph{winding number} of $r$.  Defined in this way, the winding number coincides with the classical winding number of $r$ about $x$ from elementary complex analysis.

Turning to the thick-thin decomposition, each roundabout $r_i$  enters and exits once the cusp neighborhood $B(x_i,\delta)$, and so has an associated winding number $\sphericalangle(r_i)$.  
\begin{lem}[Winding numbers]\label{lem:lengthest} 
  Assume the setup of Propositions~\ref{prop:thickthinsurface} and~\ref{prop:thickthinloop}.
\begin{enumerate}
\item For any geodesic segment $r$ contained in a cusp neighborhood $B(x,\delta)$ and with endpoints on $\partial B(x,\delta)$, 
\[ |r|=2\log_+|\sphericalangle r| + \mathcal O(1)\]
where the implicit constant depends only on $\delta$.
\item 
 For any $g\in G$ with $(\delta,\zeta)$-thick-thin decomposition $g=\ell_0.r_1.\ell_1\ldots r_k.\ell_k$, the length of its geodesic representative may be estimated as 
  \[|g|=\sum_{i=0}^k|\ell_i|+2\sum_{i=1}^k\log_+|\sphericalangle(r_i)|+\mathcal O(k)\]
  where the implicit constant depends only on $\delta$.
  \end{enumerate}
\end{lem}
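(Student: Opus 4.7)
The plan is to prove part (1) by lifting to the universal cover and a direct computation with the hyperbolic distance formula on $\mathbb H$, and then deduce (2) by additivity of length along the geodesic decomposition, absorbing the per-roundabout $O(1)$ errors into an $O(k)$ global error.

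For part (1), I would fix a cusp neighborhood $B = B(x,\delta)$, identified isometrically with $\{z\in\mathbb H:\Im z > 1/\delta\}/\langle z\mapsto z+1\rangle$. A geodesic segment $r\subset B$ with endpoints on $\partial B$ lifts to a geodesic segment $\tilde r \subset \mathbb H$ with endpoints on the horocycle $\Im z = 1/\delta$, namely from $a+i/\delta$ to $a+w+i/\delta$ where $w=\sphericalangle(r)$ by the very definition of the winding number. Using the standard formula
\[
\cosh d_{\mathbb H}(z_1,z_2)=1+\frac{|z_1-z_2|^2}{2\,\Im z_1\,\Im z_2},
\]
with $|z_1-z_2|^2=w^2$ and $\Im z_1\Im z_2 = 1/\delta^2$, the identity $\cosh(2u)=1+2\sinh^2 u$ gives $\sinh(|r|/2) = |w|\delta/2$, i.e.\ $|r| = 2\,\mathrm{arcsinh}(|w|\delta/2)$. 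For $|w|\delta/2 \geq 1$ one has $\mathrm{arcsinh}(u) = \log u + O(1)$, hence $|r| = 2\log|w| + O_\delta(1) = 2\log^+|w|+O_\delta(1)$. For $|w|\delta/2 < 1$, both $|r|$ and $\log^+|w|$ are bounded in terms of $\delta$ alone, so the same estimate holds. This proves (1).

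For part (2), the geodesic representative of $g$ is, by the uniqueness in Proposition~\ref{prop:thickthinloop}, the concatenation $\ell_0.r_1.\ell_1.\cdots.r_k.\ell_k$ of geodesic subarcs, so lengths add:
\[
|g| = \sum_{i=0}^k |\ell_i| + \sum_{i=1}^k |r_i|.
\]
Now apply part (1) to each of the $k$ roundabouts, each contributing an $O_\delta(1)$ error, for a cumulative error of $O(k)$ with implicit constant depending only on $\delta$. This gives the stated formula.

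The only genuine content is part (1), and the only mild subtlety there is handling both the ``long roundabout'' regime (where $\mathrm{arcsinh}$ behaves like $\log$) and the ``short roundabout'' regime uniformly via the $\log^+$; this is the reason for the $\log^+$ in the statement. Part (2) is then an immediate bookkeeping step. I do not expect any serious obstacles; the proof is essentially the two-line hyperbolic-distance computation together with the observation that geodesic length is additive along a geodesic decomposition.
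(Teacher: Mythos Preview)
Your proof is correct and follows essentially the same approach as the paper: both derive the exact formula $|r|=2\arcsinh(\delta|\sphericalangle r|/2)$ and then split into the large-$|\sphericalangle r|$ and small-$|\sphericalangle r|$ regimes to obtain the $2\log^+|\sphericalangle r|+O_\delta(1)$ estimate, with part~(2) following by additivity. If anything, your write-up is more explicit than the paper's, which simply quotes the arcsinh formula without deriving it from the distance formula.
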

\begin{proof}
  It suffices to note that the length of $r_i$ is given, by hyperbolic geometry, by the formula $|r_i|=2\arcsinh(\delta \cdot |\sphericalangle(r_i)|/2)$. For $x \geq 1$ we may approximate $\arcsinh(x)$ by $\log(x)+\mathcal O(1)$.  Thus for $|\sphericalangle r_i|\geq 2/\delta$ we may approximate $|r_i|$ as $\log\sphericalangle(r_i)$. If $|\sphericalangle r_i| < 2/\delta$ then $|r_i|$ is bounded between zero and a constant depending on $\delta$. The conclusion follows.
\end{proof}

\subsection{Almost geodesics}
We will later need to compare a loop based at $\star$ whose length is close to that of its geodesic representative (based at $\star$) with this geodesic representative.  The proposition below makes this precise. 

\begin{prop}[Small slippage implies nearby quasigeodesic]\label{prop:slippage} 
Suppose that
\begin{itemize}
\item $L, C$ are positive constants satisfying $L>100+100C$ ;
\item $g$ is a hyperbolic geodesic segment in $\IH$ joining points $P$ and $Q$, and $|g|=L$. \item  $\undertilde{g}$ is a rectifiable curve joining $P$ and $Q$ of length $\tilde{L} \leq L+C$.
\end{itemize}
Then
\begin{enumerate}
\item when parameterized by arc length starting from $P$, the orthogonal projection $\pi\colon \undertilde{g}\to g$ defines a $(1, C)$-quasi-isometry;
\item there exist constants $C'>C$ and $a>1$ such that for all $C_0 \leq C$, the proportion $p$ of the curve $\undertilde{g}$ that is at distance at least $C_0$ away from $g$ satisfies
\[ p < \frac{C}{L}a^{-C_0}\]
where $a>1$ is a universal constant. 
\end{enumerate}
\end{prop}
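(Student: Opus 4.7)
The plan is to work in Fermi coordinates $(t,u)$ around $g$, in which $t$ is hyperbolic arc length along $g$, $u$ is signed perpendicular distance, and the metric reads $\cosh^2(u)\,dt^2 + du^2$. Parameterizing $\undertilde{g}$ by arc length as $(t(s),u(s))$, the unit-speed condition $\cosh^2(u)(t')^2+(u')^2 = 1$ gives the basic pointwise estimate $|t'(s)|\le 1/\cosh(u(s))$. Let $\pi$ denote orthogonal projection onto $g$.

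For part~(1), the upper bound $d_g(\pi(\undertilde{g}(s_1)),\pi(\undertilde{g}(s_2))) \le |s_2-s_1|$ is immediate from $\pi$ being $1$-Lipschitz. For the matching lower bound, take $s_1<s_2$ and let $d$ denote this distance; the complementary arc $[0,s_1]\cup[s_2,\tilde L]$ has total length $\tilde L-(s_2-s_1)$, and its image under $\pi\circ\undertilde{g}$ is a connected subset of $g$ containing both endpoints $P$ and $Q$, hence covers all of $g$ except a subinterval of length at most $d$. Since $\pi$ is $1$-Lipschitz this image has length at most $\tilde L-(s_2-s_1)$ and at least $L-d$; comparing yields $d\ge (s_2-s_1)-(\tilde L-L)\ge (s_2-s_1)-C$, as required.

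For part~(2), I would combine $|t'|\le 1/\cosh(u)$ with $L\le |t(\tilde L)-t(0)|\le\int_0^{\tilde L}|t'(s)|\,ds$ to extract the fundamental integral bound
\[
\int_0^{\tilde L}\!\Bigl(1-\frac{1}{\cosh(u(s))}\Bigr)\,ds \;\le\; \tilde L-L \;\le\; C.
\]
Restricting to $S_{C_0}=\{s:|u(s)|\ge C_0\}$ and using $1-1/\cosh(u)\ge 1-1/\cosh(C_0)$ there gives the preliminary estimate $|S_{C_0}|\le C/(1-1/\cosh(C_0))$. To sharpen this into the claimed exponential form, I would decompose $S_{C_0}$ into its maximal excursion intervals $I_j$ of arc length $\ell_j$ and horizontal span $\Delta_j$: on $I_j$ the speed bound forces $\ell_j\ge \Delta_j\cosh(C_0)$ and hence $\ell_j-\Delta_j\ge \ell_j(1-1/\cosh(C_0))$, which combined with the global excess budget $\sum_j(\ell_j-\Delta_j)\le C$ tightens the estimate. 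Dividing by $\tilde L\ge L$ produces $p \le (C/L)\cdot(\text{function of }C_0)$, and the $\cosh$-factor is converted into the stated form $a^{-C_0}$ via $\cosh(C_0)\sim\tfrac{1}{2}e^{C_0}$; the auxiliary constant $C'$ records the absolute maximum perpendicular deviation, bounded by comparing $\undertilde{g}$ with its extremal ``tent'' configuration via $\tilde L\ge 2\operatorname{arccosh}(\cosh(D)\cosh(L/2))$, giving $C'=O(\sqrt{CL})$.

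The main obstacle is the sharp exponential rate in part~(2): the integral inequality alone saturates at $|S_{C_0}|\lesssim C$ for large $C_0$ rather than decaying, so extracting the factor $a^{-C_0}$ really requires the excursion-level analysis together with the exponential $\cosh$-amplification of lengths on equidistant curves. A secondary technical point is handling possible non-monotonicity of $t(s)$; any backtracking can be charged to the excess-length budget $\tilde L-L\le C$ without affecting the bounds.
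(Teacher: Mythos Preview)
Your argument for part~(1) is correct and is a pleasant alternative to the paper's. The paper parameterizes both curves by arc length from $P$, writes $\pi(\undertilde g(s))=g(t)$, and observes directly that $t\le s$ (since $\pi$ is $1$-Lipschitz) and $\tilde L-s\ge L-t$, giving $t\le s\le t+C$. It then goes further than you do: applying the hyperbolic Pythagorean law to the right triangles $PTS$ and $QTS$ (with $T=g(t)$, $S=\undertilde g(s)$) it obtains $\cosh z\le 2e^{C/2}$ for the perpendicular distance $z$, hence $|\undertilde g(s)-g(s)|\le z+|s-t|\lesssim 3C=:C'$. This is how the constant $C'$ in the statement is produced, and it is $O(C)$, independent of $L$; your tent estimate $C'=O(\sqrt{CL})$ is the Euclidean answer and is far from sharp in $\mathbb H$.

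Part~(2) has a real gap. Your ``excursion-level analysis'' reproduces exactly the same inequality as the crude integral bound: from $\ell_j\ge\Delta_j\cosh C_0$ you get $\ell_j-\Delta_j\ge\ell_j(1-1/\cosh C_0)$, and summing against $\sum_j(\ell_j-\Delta_j)\le C$ yields again $|S_{C_0}|\le C/(1-1/\cosh C_0)$. The factor $(1-1/\cosh C_0)^{-1}=\cosh C_0/(\cosh C_0-1)$ tends to $1$ as $C_0\to\infty$, so no exponential decay is present; the asymptotic $\cosh C_0\sim\tfrac12 e^{C_0}$ does not convert this into $a^{-C_0}$. Indeed, the arc-length proportion cannot decay exponentially: a curve that goes perpendicularly up by $C/2$ and straight back down, then runs along $g$, has arc-length proportion at height $\ge C_0$ equal to $(C-2C_0)/\tilde L\asymp C/L$ for all $C_0\le C/4$.

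The paper sidesteps this by measuring $p$ differently: it lets $pL$ be the length of the \emph{projection to $g$} of the far-away portion. Then the far-away arc has length at least $pL\cosh C_0$ (your inequality $\ell_j\ge\Delta_j\cosh C_0$, summed), while the remaining arc must still cover the remaining $(1-p)L$ of $g$, hence has length at least $(1-p)L$. Adding gives $pL\cosh C_0+(1-p)L\le L+C$, i.e.\ $p\le (C/L)/(\cosh C_0-1)$, and now $(\cosh C_0-1)^{-1}\asymp 2e^{-C_0}$ really does give the $a^{-C_0}$ decay. The essential difference is which side of $\ell_j\ge\Delta_j\cosh C_0$ you sum: bounding $\sum\Delta_j$ (projection) puts $\cosh C_0$ in the denominator, whereas bounding $\sum\ell_j$ (arc length) puts only $1-1/\cosh C_0$ there.
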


\begin{proof} (1) If $\pi(\undertilde{g}(s))=g(t)$ then $t \leq s$ and $\tilde{L}-s \geq L-t$, implying $t \leq s \leq t+C$. Write $S\coloneqq\undertilde{g}(s)$ and $T\coloneqq g(t)$ and consider the triangles $\Delta PTS$ and $\Delta QTS$. Set $x\coloneqq\overline{PT}=t$, $x'\coloneqq\overline{PS}$, $y\coloneqq\overline{TQ}=L-t$, $y'\coloneqq\overline{QS}$, $z\coloneqq\overline{ST}$. From laws of triangles we have 
\[ \cosh x' = \cosh x \cosh z,\qquad\cosh y' = \cosh y \cosh z.\]
Taking ratios and applying $x'+y'\leq x+y+C$ yields $\cosh z \leq 2e^{C/2}$. In turn this gives $z \leq \log 4 + C/2$. Thus $|\undertilde{g}(s)-g(s)| \leq |\undertilde{g}(s)-\pi(\tilde{g}(s)| + |\pi(\undertilde{g})(s)-g(s)| \leq z+|s-t|\leq \log 4 +C/2 + C \lesssim 3C\eqqcolon C'$ if $C\geq 2$. 

(2) Suppose $0<p<1$.  Consider the portion of $\tilde{g}$ lying at distance at least $C_0$ from $g$, and suppose that its projection to $g$ has length $pL$. Since geodesics diverge exponentially, there is a universal constant $a>1$ such that this portion has length at least $a^{C_0}\cdot pL$. The remainder of $\undertilde{g}$  has length at least $(1-p)L$. We obtain $a^{C_0}pL + (1-p)L \leq L+C$ 
and so $p \leq (C/L)a^{-C_0}$ as required.
\end{proof}

\subsection{The behavior of complex correspondences near cusps}\label{subsecn:behavior}
In this section, we collect elementary facts about the behavior of complex correspondences near the cusps. 

We begin by introducing some notation.  Consider $x\in \cusps{\cS}$.  We denote by $\beta_x\colon \ID-\{0\}\to\mathcal{\cS}$ the infinite-degree covering map induced by the subgroup of $\pi_1(\mathcal{\cS})$ generated by a peripheral loop about $x$.  Abusing notation, we let $\beta_x$ also denote the extension of $\beta_x$ to an analytic map of pointed Riemann surfaces $\beta_x\colon (\ID,0)\to (\cS \cup\{x\},x)$.  We denote by $s\coloneqq\exp(-2\pi/\delta)$ the Euclidean radius of the model cusp neighborhood given by the image of the horoball $\{\Im(z)>1/\delta\}$ under the quotient map by the stabilizer of $\sqrt{-1}\infty$, uniformized by the exponential map.   With these conventions, the restriction of $\beta_x$ to $\{|z|<s\}$ is an isomorphism onto $B(x,\delta)$ if $\delta$ is sufficiently small.  We call the coordinate $z\in \ID$ on $B(x,\delta)$ induced by $\beta_x$ a \emph{natural cusp coordinate}.   

We now suppose we are in the dynamical setting of an admissible complex correspondence $\phi, \rho\colon \cT \rightrightarrows\cS$.  Let us choose parameters $\delta, \zeta$ that specify thick-thin decompositions on $\cS$.  We fix an iterate $n\in \N$, and consider $x, y\in \cusps{\cS}$ with $F^{(-n)}(x)=y$.  By definition, this means that there is an associated cusp $\tilde{x}\in \cusps{\cT^{(n)}}$ with $\phi^{(n)}(\tilde{x})=x$ and $\rho^{(n)}(\tilde{x})=y$.  We denote by $d\coloneqq\deg(\phi^{(n)},\tilde{x})$ and $c\coloneqq\deg(\rho^{(n)},\tilde{x})$ the local degrees of $\phi^{(n)}$ and $\rho^{(n)}$ at $\tilde{x}$, respectively.  

\begin{setup}\label{setup}
By finiteness of the set of cusps of $\cT^{(n)}$, and continuity, there exist sufficiently small cusp circumferences $0<\delta'' < \delta' < \delta$ such that the following hold:
\begin{enumerate}
\item the restriction $\rho^{(n)}\restriction B(\tilde{x}, d\cdot \delta')$ is unramified outside of $\tilde{x}$;
\item the image $\rho^{(n)}(B(\tilde{x}, d\cdot \delta'))$ satisfies 
\[ B(y, \delta'') \subset \rho^{(n)}(B(\tilde{x}, d\cdot \delta')) \subset B(y, \delta);\]
\item upon setting $s'\coloneqq\exp(-2\pi / \delta')$, there is a local model $h\coloneqq h^x_y$ for this branch of $F^{(-n)}$, defined, single-valued, and analytic on the disk $\{|z|<(s')^{1/d}\}$ so that the cusps $x$ and $y$ correspond to the origin, and satisfy
\begin{equation}\label{eqn:dwind}
h(z)=az(1-zg(z))\text{ on the disk }\{|z|< (s')^{1/d}\}
\end{equation}
for some $0 \neq a\in \C$, and $|z\cdot g(z)| < 1/2$ on $\{|z|<(s')^{1/d}\}$.
\item For $\hat{\delta}\leq \delta'$, setting $\hat{s}\coloneqq s(\hat{\delta})\coloneqq\exp(-2\pi/\hat{\delta})^{1/d}$, there exist positive monotone functions $A_-, A_+\colon [0,\delta')\to [0,\delta)$ with the property that as  $\hat{\delta}\to 0^+$, the values satisfy $A_\pm(\hat{\delta})\to 0^+$, and for $w\coloneqq h(z)$
  \[ \{|w|<(1-A_-(\hat{\delta}))\cdot \hat{s}\} \subset h(\{|z|<\hat{s}\}) \subset  \{|w|<(1+A_+(\hat{\delta}))\cdot \hat{s}\}.\]
  \label{setup:deltahat}
\end{enumerate} 
\end{setup}

Figure~\ref{fig:cuspdiagram} explains the construction of $h$.   
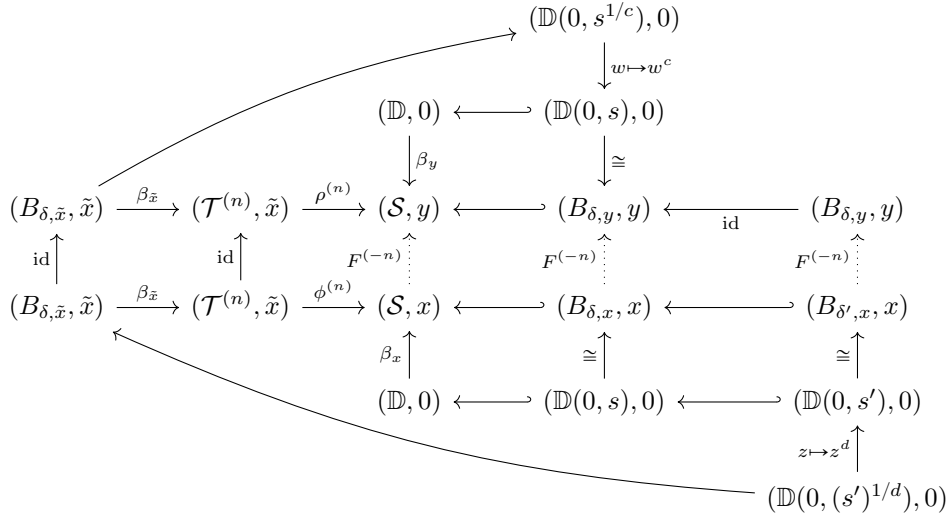
\begin{figure}
\[\begin{tikzcd}
&&  & (\ID(0,s^{1/c}),0) \arrow[d,"w \mapsto w^c"]\\
&& (\ID,0) \arrow[d,"\beta_y"] & (\ID(0,s),0) \arrow[d,"\cong"]\arrow[l,hook']\\
(B_{\delta, \tilde{x}},\tilde{x}) \arrow[r,"\beta_{\tilde{x}}"]  \arrow[rrruu,bend left=10] & (\cT^{(n)}, \tilde{x}) \arrow[r,"\rho^{(n)}"] & (\cS, y) & (B_{\delta, y},y)\arrow[l,hook'] &(B_{\delta,y},y) \arrow[l,"\text{id}"]\\
(B_{\delta, \tilde{x}},\tilde{x})\arrow[u,"\text{id}"] \arrow[r,"\beta_{\tilde{x}}"] & (\cT^{(n)}, \tilde{x}) \arrow[r,"\phi^{(n)}"]\arrow[u,"\text{id}"] & (\cS, x) \arrow[u,dotted,"F^{(-n)}"] & (B_{\delta, x},x)\arrow[u,dotted,"F^{(-n)}"]\arrow[l,hook'] & (B_{\delta',x},x)\arrow[l,hook']\arrow[u,dotted,"F^{(-n)}"]\\
& & (\ID,0) \arrow[u,"\beta_x"] & (\ID(0,s),0)\arrow[u,"\cong"]\arrow[l,hook'] &(\ID(0,s'),0)\arrow[u,"\cong"] \arrow[l,hook']\\
&&  & &(\ID(0,(s')^{1/d}),0)\arrow[u,"z \mapsto z^d"]\arrow[uullll,bend left=10]\\
\end{tikzcd}
\]
\caption{\sf Modeling the behavior of $F^{(-n)}$ near the cusp $x$, with image $y$. Arrows labeled $\cong$ are isomorphisms and those whose tails are curved are inclusions.  Following the arrows along the left-hand side from bottom-right to top yields a univalent, single-valued map $h\coloneqq h^x_y$ locally of the form $z \mapsto az(1+zg(z))$ with $|g|<1/2$ if $\delta'$ is chosen sufficiently small.}
\label{fig:cuspdiagram}
\end{figure}

We emphasize that $\delta$ is independent of $n$, while $\delta', \delta''$ do depend on $n$.  

In cusp coordinates, multiplication by a constant (such as $a$, or $1\pm A_\pm(\cdot))$) corresponds in the hyperbolic metric to a translation (such as $\tau \mapsto \tau+\log a$, etc.) whose displacement (such as e.g.\ $d_\IH(\tau, \tau+\log a)$) tends to zero as $\Im(\tau)\to\infty$.  The above discussion yields 

\begin{lem}\label{lem:t}
  If $x, y$ are cusps of $\cS$ such that $F^{(-n)}(x)=y$ and such that in the above natural cusp coordinates $\beta_x, \beta_y$ the corresponding branch of $F^{(-n)}$ has a Puiseux series of the form $z \mapsto az^t+\cdots$ with $t=c/d$, then in the Hausdorff metric on closed subsets of $\cS$ equipped with the hyperbolic metric, we have 
\[ F^{(-n)}(\overline{B(x,\hat{\delta})}) \approx \overline{B(y,\hat{\delta}/t)}\]
with additive error tending to zero  as $\hat{\delta}\to 0^+$.
\end{lem}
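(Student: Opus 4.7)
The plan is to work entirely in the local cusp coordinates furnished by Figure~\ref{fig:cuspdiagram}, reducing the lemma to a computation about how round disks deep inside a cusp are moved by power maps. Unwinding the diagram, $\overline{B(x,\hat\delta)}$ lifts along $\beta_x\circ(z\mapsto z^d)$ to $\{|z|\le\hat s\}$ with $\hat s=\exp(-2\pi/\hat\delta)^{1/d}$, and the relevant branch of $F^{(-n)}$ is modeled on this disk by the univalent map $h(z)=az(1-zg(z))$; its image in $\cS$ is obtained by post-composing with $w\mapsto w^c$ and then $\beta_y$. On the target side, the set $\overline{B(y,\hat\delta/t)}$ corresponds in the cusp coordinate $\tilde w=w^c$ exactly to $\{|\tilde w|\le\hat s^c\}$, since $\hat s^c=\exp(-2\pi t/\hat\delta)=\exp(-2\pi/(\hat\delta/t))$.

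First I would sandwich the image $h(\{|z|<\hat s\})$ by round disks in the $w$-coordinate. Using the bound $|zg(z)|<1/2$ together with property~(4) of \S\ref{subsecn:behavior}, for all sufficiently small $\hat\delta$ one has
\[\{|w|<r_-(\hat\delta)\}\subseteq h(\{|z|<\hat s\})\subseteq\{|w|<r_+(\hat\delta)\},\]
where $r_\pm(\hat\delta)=|a|(1\pm\epsilon(\hat\delta))\hat s$ for some $\epsilon(\hat\delta)\to 0^+$ as $\hat\delta\to 0^+$. Raising to the $c$th power and descending to $\cS$ via $\beta_y$, the set $F^{(-n)}(\overline{B(x,\hat\delta)})$ is sandwiched between two round cusp neighborhoods of $y$ whose $\tilde w$-radii are $|a|^c(1\pm\epsilon(\hat\delta))^c\hat s^c$.

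Finally I would translate the sandwich into the hyperbolic Hausdorff estimate. In the half-plane model $\IH/\langle\tau\mapsto\tau+1\rangle$ of a cusp neighborhood of $y$, a round disk $\{|\tilde w|<r\}$ corresponds to the horoball region $\{\Im\tau>-\log r/(2\pi)\}$, and the hyperbolic Hausdorff distance between two such nested half-planes with boundary heights $h_1<h_2$ equals $\log(h_2/h_1)$. Multiplying the radius $\hat s^c$ by the bounded constant $|a|^c(1\pm\epsilon(\hat\delta))^c$ shifts the horocycle height by a bounded amount, while the base height $h=-c\log\hat s/(2\pi)$ tends to $+\infty$ as $\hat\delta\to 0^+$; hence $\log(h_2/h_1)\to 0$, and the additive error in the hyperbolic Hausdorff metric indeed vanishes. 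The main obstacle is a careful bookkeeping of normalizations: one must verify that each source of error (the remainder term $zg(z)$, the sandwich constants $A_\pm$ from property~(4), and the overall derivative $a$ in the Puiseux expansion) contributes only a bounded shift of the horocycle height, so that its effect on the hyperbolic metric deep inside the cusp is $o(1)$ rather than a fixed positive constant.
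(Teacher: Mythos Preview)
Your proposal is correct and follows essentially the same route as the paper. The paper's argument is the one-sentence remark preceding the lemma: multiplication by a bounded constant (the $a$ and the $1\pm A_\pm(\hat\delta)$ from property~(4) of \S\ref{subsecn:behavior}) corresponds in the half-plane model to a horizontal translation $\tau\mapsto\tau+\text{const}$ whose hyperbolic displacement tends to zero as $\Im\tau\to\infty$; you have simply unpacked this into the explicit sandwich and the $\log(h_2/h_1)\to0$ computation.
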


Note that the appearance of $t$ in the denominator makes sense: if e.g.\ $t=1/2$,  then the branch of $F^{(-n)}$ is strongly repelling in the Euclidean metric on the natural cusp coordinates, so the circumference of the image horoball should become larger. 

\section{$\sfX$-rays from correspondences}\label{secn:xrays}
In this section, we assume throughout that $\phi, \rho\colon \cT \rightrightarrows\cS$ is a covering correspondence of path-connected spaces.  As usual, $F\coloneqq\phi \circ \rho^{-1}$ denotes the associated multivalued map.  

\begin{defn}[$\sfX$-rays]\label{def:xray}
  Suppose $\star\in \cS$ is a base point (not necessarily fixed under $F$), $G\coloneqq\pi_1(\cS, \star)$, and $f$ is a path in $\cS$ joining $\star$ to an element of $F^{-1}(\star)$. Suppose $\sfX$ is a finite set of paths in $\cS$, each connecting $\star$ to an element of $F^{-1}(\star)$. An \emph{$\sfX$-ray of loops in the direction of  $f$ above $g\in G$} is an infinite sequence $g=g^{(0)},g^{(1)},\dots$ in $G$ such that for all $n\in\N$ there is $\sfx_n\in \sfX$ with
\[ f\cdot g^{(n)}\simeq g^{(n+1)}\cdot \sfx_{n+1}\text{ for all }n=0, 1,\ldots\]
\end{defn}
Equivalently: as a concatenation of paths beginning with $f$, we have $f.\widetilde{g}^{(n)}.\overline{\sfx_n} = g^{(n+1)}$ up to homotopy, where  the symbol $\overline{\sfx_n}$ denotes the reverse of the path $\sfx_n$.

We pause for a few clarifying comments about notation and terminology. $\sfX$-rays only depend on the homotopy classes relative to endpoints of the curves $f$ and $\sfx\in \sfX$.  Abusing terminology, we also say that $g^{(n+1)}$ is a ``lift'' of $g^{(n)}$, instead of a lift-followed-by-concatenation, and that the sequence $g^{(n)}$, $n=1, 2, 3, \ldots$ is a ``sequence of lifts of $g$''.  With this terminology, the relation on $\pi_1(\cS, \star)$ given by ``being a lift under the multivalued map $F$'' is a one-to-finite multivalued map, and an $\sfX$-ray is an orbit under iteration of this map.  It is also useful to think of $f$ and the collection $\sfX$ as together inducing a collection of self-maps of $G$---akin to an iterated function system---given by $g\mapsto(\text{any }h\text{ with }f\cdot g=h\cdot\sfx\text{ for some }x\in\sfX)$.

\begin{defn}[Contraction along $\sfX$-rays in the direction of $f$]\label{defn:bxr}
  We call the correspondence $F$ is \emph{contracting along $\sfX$-rays in the direction of $f$} if, for every choice of curves $\sfX$ as in the definition of $\sfX$-ray, there is a finite attractor $A\subset G$ such that for any $g\in G$, and any $\sfX$-ray $(g^{(n)})$ in the direction of $f$ above $g$, there is an integer $N\in \N$ such that $g^{(n)}\in A$ for all $n>N$.
\end{defn}

Note that in this definition, the finite set $A$ depends on $\sfX$, but not on $g$ and not on the choice of an $\sfX$-ray above $g$.  

Here is an example. More general notions could be formulated; the one below is simple enough to illustrate our themes.

\begin{defn}\label{defn:uniformly hyperbolic}
A covering correspondence from $\phi, \rho\colon \cT \rightrightarrows\cS$ is \emph{uniformly hyperbolic} if 
\begin{enumerate}
\item there exists a complete length metric on $\cS$ (yielding, via lifting by $\phi$, a locally isometric length metric on $\cT$);
\item there exists a compact rectifiably connected subset $K \subset \cS$ such that $\rho\circ\phi^{-1}(K) \subset K$;
\item the inclusion $K \hookrightarrow \cS$ induces a surjection on fundamental groups;
\item the norm on $\pi_1(\cS)$ induced by the length metric on $K$ is proper;
\item $\rho$ is a contraction everywhere, so that the restriction $\rho\colon \phi^{-1}(K)\to K$ is a uniform contraction, with factor $c<1$.
\end{enumerate}
\end{defn}
For example, an admissible complex correspondence that is uniformly hyperbolic according to its analytic definition, is also uniformly hyperbolic with this definition, when the hyperbolic metric is used; we may take $K$ to be any $\epsilon$-neighborhood of its compact limit set.

We give the proof of the elementary result below in order to illustrate the themes in our development.

\begin{prop}\label{prop:uh}
  If $F$ is uniformly hyperbolic, then for any  path $f$ joining any basepoint $\star$ to any element of $F^{-1}(\star)$, the correspondence $F$ is contracting along $\sfX$-rays in the direction of $f$.
\end{prop}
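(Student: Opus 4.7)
The plan is to exploit the uniform contraction $\rho\colon \phi^{-1}(K)\to K$ with factor $\lambda<1$ to derive a one-step estimate of the form $|g^{(n+1)}|\le\lambda|g^{(n)}|+C$, with $C$ depending only on $f$ and $\sfX$, and then to extract a finite attractor from the properness of the norm on $G=\pi_1(\cS,*)$.

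First, I would assume $*\in K$: since $K$ is rectifiably connected and $\pi_1$-surjective, this amounts to either a cosmetic change of basepoint or the addition of a bounded rectifiable arc, at the cost of additive constants in what follows. The norm $|g|$ is then the infimum of lengths of loops in $K$ at $*$ representing $g$. Since $F^{-1}(K)\subset K$ and $*\in K$, the endpoints of $f$ and of each $\sfx\in \sfX$ lie in $K$; I would fix once and for all reference paths $f_K$ and $\sfx_K$ inside $K$ joining $*$ to those endpoints, and set $\eta_f\coloneqq [f.\overline{f_K}]$ and $\eta_\sfx\coloneqq [\sfx.\overline{\sfx_K}]$ in $G$. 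The norms $|\eta_f|$ and $\max_{\sfx\in\sfX}|\eta_\sfx|$ depend only on the data $(f,\sfX)$.

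Given an $\sfX$-ray $(g^{(n)})$ above $g$, at each step I would choose a near-minimal representative loop $\gamma_n\subset K$ at $*$ for $g^{(n)}$. Its $\phi$-lift $\tilde\gamma_n\subset \phi^{-1}(K)$ has the same length (as $\phi$ is locally isometric), and $\rho(\tilde\gamma_n)$ lies in $K$ by the invariance $\rho(\phi^{-1}(K))\subset K$ and has length at most $\lambda|g^{(n)}|+o(1)$. Using Proposition~\ref{prop:identify}, the biset identity $f\cdot g^{(n)}=g^{(n+1)}\cdot \sfx_{n+1}$ rewrites as
\[g^{(n+1)}=\eta_f\cdot\bigl[f_K.\rho(\tilde\gamma_n).\overline{\sfx_{n+1,K}}\bigr]\cdot \eta_{\sfx_{n+1}}^{-1}\]
in $G$; the bracketed middle factor is represented by a loop in $K$ of length at most $\lambda|g^{(n)}|+|f_K|+\max_\sfx|\sfx_K|+o(1)$, while the outer factors have uniformly bounded norm. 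Subadditivity of $|\cdot|$ then yields $|g^{(n+1)}|\le\lambda|g^{(n)}|+C$ for $C\coloneqq |f_K|+\max_\sfx|\sfx_K|+|\eta_f|+\max_\sfx|\eta_\sfx|+1$.

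Iterating this recurrence, $|g^{(n)}|\le R\coloneqq C/(1-\lambda)+1$ once $n$ exceeds a bound depending only on $|g|$, $\lambda$, and $C$. The set $A\coloneqq\{h\in G:|h|\le R\}$ is finite by properness of the norm and is independent of $g$ and of the particular $\sfX$-ray chosen, hence is the desired attractor. The only technical nuisance, and the one place where some care is needed, is the handling of the fixed paths $f$ and $\sfx$, which need not lie in $K$; their replacement by reference paths inside $K$ contributes the bounded additive corrections via $\eta_f,\eta_\sfx$ but does not disturb the crucial geometric contraction supplied by $\rho$.
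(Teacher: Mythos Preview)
Your proof is correct and follows essentially the same approach as the paper: obtain a one-step estimate $|g^{(n+1)}|\le\lambda|g^{(n)}|+C$ from the uniform contraction of $\rho$ on $\phi^{-1}(K)$, iterate, and invoke properness of the norm. The paper streamlines your handling of $f$ and $\sfX$ by simply observing that, since $\sfX$-rays depend only on the homotopy classes of $f$ and the $\sfx$'s, one may assume outright that $f\subset K$ and $\sfX\subset K$; this dispenses with your reference paths $f_K,\sfx_K$ and correction loops $\eta_f,\eta_\sfx$, and also makes your appeal to Proposition~\ref{prop:identify} (which is specific to Thurston maps, not general correspondences) unnecessary --- the path-concatenation form of the biset equation is already part of Definition~\ref{def:xray}.
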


\begin{proof}
  Suppose $f, \sfX$ are as in the definition of $\sfX$-ray; we think of them as paths in $\cS$.  We may assume $f \subset K$ and $\sfx \subset K$ for each $\sfx \in \sfX$. For $g\in G$ arbitrary we denote by $|g|_K$ the infimum of the lengths of paths in $K$ representing $g$; similarly for a rectifiable path $\sfx \subset K$ we denote by $|\sfx|_K$ its length.  Put $\xi\coloneqq\max\{|\sfx|_K : \sfx\in \sfX\}$.   Suppose $g=g^{(0)}\in G$; we may represent it by a loop in $K$ based at $\star$.  From the definition of $\sfX$-ray in the direction of $f$ above $g$, and the backward-invariance of $K$, we have that for each $n\in \N$, the loop representing $g^{(n)}$ lies in $K$ and has length at most $(|f|_K+\xi)/(1-c)+c^{n}|g|_K$.  Thus when $n$ is sufficiently large, the length $|g^{(n)}|_K$ is bounded by a constant, $C$, independent of $g$. By properness of the norm, the set of elements $A\coloneqq\{h\in G : |h|_K<C\}$ is finite.
\end{proof}

Corollary~\ref{cor:contraction} below, applied to e.g.\ the correspondence on moduli space induced by Lodge's example of~\S\ref{secn:examples}, shows that the converse of Proposition~\ref{prop:uh} need not hold.  Thus, being ``contracting along  $\sfX$-rays in the direction of $f$'' is a form of non-uniform hyperbolicity property enjoyed by the pair consisting of the correspondence $\phi, \rho\colon \cT \to \cS$ and the element $f$.  

We next recall the statement of our main result:
\setcounter{mainthm}{2}
\begin{mainthm}
  Suppose $F=\phi, \rho\colon \cT \rightrightarrows\cS$ is an admissible correspondence, and $\star\in \cS$ is a fixed point of $F$.  Let $\star$ also denote the constant path at $\star$.  Fix a finite collection $\sfX$ of curves as in~\eqref{eq:genset}. Then there are constants $\kappa>0,N\in\N,\epsilon>0,\xi>0$ such that for every $g\in \pi_1(\cS, \star)$ and every $\sfX$-ray of loops $(g^{(n)})$ in the direction of $\star$ above $g$, we have
  \begin{enumerate}
  \item $|g^{(n+1)}|\le|g^{(n)}|+\xi$ for all $n$;\label{mainthm:1}
   \item if $|g|>\kappa$ then $|g^{(n)}|\le |g|-\epsilon$ for some $n\in\{0,\dots,N\}$. \label{mainthm:2}
  \end{enumerate}
\end{mainthm}
We recall that the norm $|\cdot|$ on the fundamental group $\pi_1(\cS, \star)$ is that induced by the hyperbolic metric.

Estimate~\eqref{mainthm:1} in Theorem~\ref{thm:main} is an immediate consequence of the definitions and the fact that admissible analytic correspondences are weakly hyperbolic.  Proving estimate~\eqref{mainthm:2} requires much more work.

We next show that the additive contraction estimates~\eqref{mainthm:1} and~\eqref{mainthm:2} in Theorem~\ref{thm:main} imply contraction along $\sfX$-rays at the fixed point $f=\star$.

\begin{cor}[Contraction along $\sfX$-rays at $\star$]\label{cor:contraction}
  In the setting of Theorem~\ref{thm:main}, the correspondence $F$ is contracting along $\sfX$-rays in the direction of the fixed point $\star$.

More precisely: for all $n$,
\begin{equation}\label{eq:1cor:contraction}
\big| g^{(n)}\big| \le \max\left\{ \left| g\right| - \epsilon\left\lfloor \frac{n}{N}\right\rfloor , \ \kappa\right\}+N\xi,
\end{equation}
where $\lfloor \frac{n}{N}\rfloor $ is the integer part of $\frac{n}{N}$.  Thus for all $n \gg 1$,
\begin{equation}\label{eq:2:cor:contraction}
  g_n\in \big\{g : |g| \le \kappa +N\xi\big\}.
\end{equation}
\end{cor}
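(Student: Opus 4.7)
The plan is to iterate Theorem~\ref{thm:main}(2) to produce a sequence of times along which $|g^{(\cdot)}|$ drops by $\epsilon$, using Theorem~\ref{thm:main}(1) to control growth between these drops (by at most $\xi$ per step). The enabling observation is that any tail $(g^{(m)}, g^{(m+1)}, \ldots)$ of an $\sfX$-ray in the direction of $*$ is again an $\sfX$-ray above $g^{(m)}$, so Theorem~\ref{thm:main}(2) can be reapplied at every stage of the orbit.

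Concretely, I set $\tau_0 := 0$, and while $|g^{(\tau_j)}| > \kappa$ I apply~(2) to the tail at $g^{(\tau_j)}$ to select $\tau_{j+1} \in \{\tau_j+1, \ldots, \tau_j+N\}$ with $|g^{(\tau_{j+1})}| \leq |g^{(\tau_j)}| - \epsilon$. Then $\tau_{j+1}-\tau_j \leq N$ and $|g^{(\tau_j)}| \leq |g|-j\epsilon$ inductively. For $n$ preceding the first index $J$ with $|g^{(\tau_J)}| \leq \kappa$, let $j$ be maximal with $\tau_j \leq n$; then $n-\tau_j < \tau_{j+1}-\tau_j \leq N$, so by~(1), $|g^{(n)}| \leq |g^{(\tau_j)}|+N\xi \leq |g|-j\epsilon+N\xi$. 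Since each step $\tau_{k+1}-\tau_k$ is at most $N$ we have $\tau_{j+1}\leq (j+1)N$, which forces $j\geq \lfloor n/N\rfloor$ and yields the bound~\eqref{eq:1cor:contraction} in this regime.

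After $\tau_J$, one restarts the drop-construction from the tail at $g^{(\tau_J)}$: at each subsequent position where the norm exceeds $\kappa$, one applies~(2) to harvest a further $\epsilon$-drop within $N$ steps; at positions where it does not, one simply advances by one step at the cost of at most $\xi$ in the norm. An elementary induction along the restarted sequence shows that the norm remains bounded by $\kappa+N\xi$ for all subsequent times, matching the second branch of the max in~\eqref{eq:1cor:contraction}. I expect this last bookkeeping --- handling the possibility that after the norm first reaches $\kappa$ it may rise again and require fresh applications of~(2) --- to be the only real technical point, though it is entirely combinatorial.

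The consequence~\eqref{eq:2:cor:contraction} is then immediate: once $n \geq N(|g|-\kappa)/\epsilon$ the first argument of the max in~\eqref{eq:1cor:contraction} drops to $\kappa$, so $|g^{(n)}| \leq \kappa + N\xi$. Properness of the hyperbolic norm on $\pi_1(\cS,*)$ (cf.\ \S\ref{subsecn:deltathickthin}) implies that the set $\{h\in \pi_1(\cS,*) : |h|\leq \kappa+N\xi\}$ is finite, and this is the desired attractor $A$.
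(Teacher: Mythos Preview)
Your proof is correct and follows essentially the same approach as the paper: both construct an increasing sequence of indices at which either an $\epsilon$-drop occurs (via Theorem~\ref{thm:main}(2)) or one advances by a single step when already below $\kappa$, then interpolate using Theorem~\ref{thm:main}(1). The only cosmetic difference is that the paper runs a single recursion handling both regimes at once, whereas you split into a ``before first reaching $\kappa$'' phase and an ``after'' phase; your bookkeeping (in particular the inequality $\tau_{j+1}\le (j+1)N \Rightarrow j\ge\lfloor n/N\rfloor$) is correct.
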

It follows that the set $\{g : |g| \le \kappa +N\xi\}$ is a finite global attractor for $\sfX$-rays in the direction of $\star$.

\begin{proof}
  Write $n_0\coloneqq 0$ and define inductively the indices \[I=\{n_1 <n_2<n_3<\dots <n_s<\dots \}\subseteq\N\] 
 using the following rules:
 \begin{itemize}
 \item if $\left|g^{n_{s-1}}\right|>\kappa$, then $n_s\le n_{s-1}+N$ is the first index such that  ${\left|g^{n_{s}}\right|\le\left|g^{n_{s-1}}\right|-\epsilon}$;
 \item if $\left|g^{n_{s-1}} \right|\le\kappa$, then set $n_{s}\coloneqq n_{s-1}+1$ and observe that $\left|g^{n_{s}}\right|\le \kappa+\xi$, by Conclusion~(2) of Theorem~\ref{thm:main}.
\end{itemize}

For each $s$, we have
\[\big| g^{(n_s)}\big| \le \max\left\{|g| - \epsilon s, \kappa+\xi\right\}. \]
Since for each $s$ we have $n_{s+1}-n_{s}\le N$, we obtain from Conclusion~(2) of Theorem~\ref{thm:main} that 
 \[\big| g^{(n)}\big| \le \max\left\{|g| - \epsilon s, \kappa+\xi\right\}+(N-1)\xi \] 
 for each $s$ and for every $n$ with $n_{s}< n <n_{s+1}$. Equation~\eqref{eq:1cor:contraction} holds because $s\le \lfloor \frac{n}N\rfloor$, and the corollary follows.
\end{proof}

\subsection{$\sfX$-rays via bisets}\label{subsec:basiscase}
The notion of $\sfX$-ray may naturally be cast in the algebraic language of bisets, see~\S\ref{subsecn:introbisets}. Indeed the definition only depends on the homotopy classes, relative to endpoints, of the paths $f$ and $\sfx\in \sfX$, so may be viewed entirely algebraically. This subsection is not necessary for the proofs of the main theorems of this article.

\begin{defn}[$\sfX$-ray, biset version]
  Let $B$ be a left-free covering $G$-$G$-biset; choose a finite subset $\sfX\subseteq B$ and an element $\sfb\in B$.

  An \emph{$\sfX$-ray in the direction of $\sfb$ above $g=g^{(0)}\in G$} is an infinite sequence $(g^{(n)})$ such that for each $n\in \N$, there exist $\sfx_n\in \sfX$ with 
\[ \sfb^{\otimes n} \cdot g = g^{(n)}\cdot \sfx_n\sfx_{n-1}\ldots \sfx_1\]
as an equality in $B^{\otimes n}$. Equivalently, since the biset $B$ is left-free,
\begin{equation}\label{eq:square}
\sfb \cdot g^{(n)}=g^{(n+1)}\cdot \sfx_{n+1}
\end{equation}
\end{defn}

Equivalently, it is an orbit under iteration of the multivalued operator (determined by $\sfb$ and $\sfX$) given by $g\mapsto\{h\in G: \sfb\cdot g\in h\cdot\sfX\}$.

\begin{defn}[Contracting along $\sfX$-rays at $\sfb$, biset version]
We say the biset $B$ is \emph{contracting along $\sfX$-rays in the direction of $\sfb$} if, for any choice of finite set $\sfX \subset B$ containing a basis, as in the definition of $\sfX$-ray, there is a finite attractor  $A\subset G$ such that for any $g\in G$, and any $\sfX$-ray $(g^{(n)})$ in the direction of $\sfb$ above $g$, there is an integer $N\in \N$ such that $g^{(n)}\in A$ for all $n>N$.
\end{defn}

Equivalently, the above multivalued operator $g \mapsto \{h\in G: \sfb\cdot g\in h\cdot\sfX\}$ has a finite global attractor.

The attractor $A$ depends on $\sfX$ and $\sfb$.  Contraction is clearly invariant under isomorphisms $(B,b)\to (B', b')$.  It is also invariant under conjugation of $b$.  To see this, suppose $\sfa=h\cdot \sfb\cdot h^{-1}$ for some $h\in G$.   Then  $\sfa^{\otimes n}=h \cdot \sfb^{\otimes n}\cdot h^{-1}$, and so $\sfa^{\otimes n}\cdot g=h\cdot \sfb^{\otimes n}\cdot (h^{-1}g)$.  For $n$ large enough we have $\sfb^{\otimes n}\cdot h^{-1}g\in A$ and so we see that $B$ is also contracting along $\sfX$-rays in the direction of $a$, with attractor $hA$.  

Formulated in this way, this contracting property is an invariant of the pair $(B,\sfb^G)$, where $\sfb^G$ denotes the conjugacy class of $\sfb$ in $B$.

\section{Proof of Theorem~\ref{thm:main}}\label{secn:proof}
Let $\phi, \rho\colon \cT \rightrightarrows\cS$ be an admissible complex correspondence having a fixed point $\star\in \cS$; as usual we set $F\coloneqq\phi \circ \rho^{-1}$.  We set $G\coloneqq\pi_1(\cS, \star)$ and equip $G$ with its hyperbolic norm, $|g|$. Choose parameters $\delta, \zeta$ to specify the notion of thick-thin decomposition of $\cS$, as in Proposition~\ref{prop:thickthinsurface}.  By compactness, local inverse branches of $F$ are uniformly contracting on the thick part of $\cS$; we put $c\coloneqq\max\{\|dF^{-1}(z)\|_{\cS, \cS} : z\in \cS^{\text{thick}}\}$, so that $c<1$. 

We recall that $\sfX$ is a finite collection of rectifiable paths from $\star$ to $F^{-1}(\star)$, and $\xi$ is defined as $\max\{|\sfx| : x\in \sfX\}$.

\subsection{Basic contraction estimate}\label{subsec:basic}
In this subsection, we study the relationship between the lengths of consecutive terms in an $\sfX$-ray.

Let $g\in G$ be given, and consider $g=g^{(0)}, g^{(1)}, \ldots$ an $\sfX$-ray of loops above $g$, represented by geodesic loops based at $\star$. 

We consider now the associated thick-thin decomposition of $g$, as in Proposition~\ref{prop:thickthinloop}:
\[ g=\ell_0.r_1\ldots r_k\ell_k.\]
We introduce the notation $L\coloneqq\max\{|\ell_i| : 1 \leq i \leq k\}$.  

\begin{prop}[Basic contraction]\label{prop:basic}
  We have 
\begin{enumerate}
\item $|g^{(1)}|<|g|+\xi$; more generally $|g^{(n+1)}|<|g^{(n)}|+\xi$ and $|g^{(n)}| \leq |g|+n\xi$;
\item $|g^{(1)}| < |g| - (k+1)(1-c)\zeta + \xi$;
\item $|g^{(1)}| < |g| - (1-c)L+\xi$.
\end{enumerate}
\end{prop}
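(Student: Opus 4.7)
The plan is to reduce all three estimates to a single inequality for $|\rho\circ\phi^*(g)|$ and then exploit the uniform contraction of $\rho$ on the thick part.

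First I would unpack the $\sfX$-ray relation when $f$ is the constant path at the fixed point $*$: this says that $g^{(1)}$ is represented by the concatenation $\rho\circ\phi^*(g).\overline{\sfx_1}$ for some $\sfx_1\in\sfX$. The concatenation is a genuine loop at $*$ because $\phi(\tilde*)=\rho(\tilde*)=*$: the $\phi$-lift $\phi^*(g)$ starts at $\tilde*$ and ends in $\phi^{-1}(*)$, and $\rho$ carries that endpoint into $F^{-1}(*)$, which is the endpoint of some $\sfx_1\in\sfX$ by the defining hypothesis on $\sfX$. Since $|g^{(1)}|$ is the hyperbolic length of a geodesic representative,
\[ |g^{(1)}| \le |\rho\circ\phi^*(g)| + |\sfx_1| \le |\rho\circ\phi^*(g)| + \xi. \]
All three parts will then follow by bounding $|\rho\circ\phi^*(g)|$.

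For part (1), I would observe that since $\phi$ is a hyperbolic covering, $\phi^*$ is a local isometry for the hyperbolic metrics on $\cT$ and $\cS$, so $|\phi^*(g)|_{\cT}=|g|_{\cS}$. Since $\rho\colon\cT\to\cS$ is analytic between hyperbolic Riemann surfaces, Schwarz-Pick gives $|\rho\circ\phi^*(g)|\le|\phi^*(g)|=|g|$. Substituting yields $|g^{(1)}|\le|g|+\xi$; applying the same bound at each stage of the $\sfX$-ray gives $|g^{(n+1)}|\le|g^{(n)}|+\xi$, and iterating produces $|g^{(n)}|\le|g|+n\xi$.

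For parts (2) and (3), I would insert the thick-thin decomposition $g=\ell_0.r_1.\ell_1.\dots.r_k.\ell_k$ from Proposition~\ref{prop:thickthinloop}. Each $\ell_i$ lies in the compact set $\cS^{\text{thick}}$, so its $\phi$-lift lies in $\phi^{-1}(\cS^{\text{thick}})$, a compact subset of $\cT$ (because $\phi$ is a finite cover sending cusps to cusps). On this compact set, weak hyperbolicity $\|d\rho\|<1$ together with continuity yields $\|d\rho\|\le c<1$, exactly the constant $c$ defined at the start of \S\ref{secn:proof}. Hence $|\rho\circ\phi^*(\ell_i)|\le c|\ell_i|$, while Schwarz-Pick still gives $|\rho\circ\phi^*(r_j)|\le|r_j|$ on the roundabouts. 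Summing,
\[ |\rho\circ\phi^*(g)| \;\le\; c\sum_{i=0}^k|\ell_i| + \sum_{j=1}^k|r_j| \;=\; |g| - (1-c)\sum_{i=0}^k|\ell_i|. \]
There are $k+1$ thick segments, each of length at least $\zeta$, so $\sum_i|\ell_i|\ge(k+1)\zeta$; combined with the master bound this gives part (2). Also $\sum_i|\ell_i|\ge L$ by definition of $L$, so the same reasoning gives part (3).

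No substantial obstacle is anticipated; this is really a packaging of Schwarz-Pick with the thick-thin machinery already set up. The delicate bookkeeping is (i) using the fixed-point hypothesis so that the concatenation of interest is a loop at $*$ rather than a path between distinct endpoints, and (ii) realizing that the relevant compact set in $\cT$ is $\phi^{-1}(\cS^{\text{thick}})$ rather than the thick part of $\cT$ in the intrinsic sense. The strict inequalities $<$ require $g\ne 1$ (otherwise $|g|=0$ and only $\le\xi$ is available); this strictness arises because $\rho$ is a strict contraction along any nontrivial path, by admissibility.
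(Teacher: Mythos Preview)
Your argument is correct and is exactly the approach the paper takes, just spelled out in full detail where the paper's proof is a terse three-line sketch (``follows immediately from weak contraction'', ``apply uniform contraction on each $\ell_i$'', ``apply uniform contraction on one $\ell_i$ realizing $L$''). Your explicit identification of $\phi^{-1}(\cS^{\text{thick}})$ as the compact set on which $\|d\rho\|\le c$, and your remark on the strictness caveat for trivial $g$, are welcome clarifications that the paper leaves implicit.
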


\begin{proof} (1) follows immediately from the definition of $g^{(1)}$ and   contraction of $F^{-1}$. (2) follows from (1) by applying 
uniform contraction of $F^{-1}$ on each of the $\ell_i$'s.  (3) follows from (1) and applying uniform contraction of $F^{-1}$ on one of the $\ell_i$'s realizing the maximum length $L$.
\end{proof}

\subsection{Tight sequences} 
Conclusion~(1) of Theorem~\ref{thm:main} follows immediately from Proposition~\ref{prop:basic}(1).  Our proof of Conclusion (2) of Theorem~\ref{thm:main} is  by contradiction. We assume Conclusion (2) does not hold.   Then, flipping all quantifiers, for every $\kappa\in\R,N\in\N,\epsilon>0$ there are $g\in G$ with $|g|>\kappa$ and $|g^{(n)}|>|g|-\epsilon$ for all $n\in\{0,\dots,N\}$.

\begin{defn}\label{defn:tight}
  A \emph{tight sequence} is a sequence $(g_m)$ in $G$ such that $|g_m|>m$ and $|g_m^{(n)}|>|g_m|-1/m$ for all $n\in\{0,\dots,m\}$.
\end{defn}
Thus we assume that there exists a tight sequence, and we will derive a contradiction.

\begin{prop}[Basic facts about tight sequences]\label{prop:tightfacts}
Suppose $(g_m)_m$ is a tight sequence, $m\in \N$, and 
\[ g_m = \ell_{0,m}.r_{1,m}\ldots r_{k_m,m}. \ell_{k_m,m}\]
is the $(\delta, \zeta)$-thick-thin decomposition of $g_m$ as in Proposition~\ref{prop:thickthinloop}. Let $L_m=\max\{|\ell_{i,m}|: 1 \leq i \leq k_m\}$ be the length of the longest segment in the thick part of the decomposition of $g_m$.  Then $L_m$ and $k_m$ are uniformly bounded independently of $m$: for all $m$ we have 
\begin{enumerate}
\item $k_m \leq 1+ \frac{\xi}{(1-c)\zeta}\eqqcolon K'$;
\item $L_m \leq 1+ \frac{\xi}{(1-c)}\eqqcolon L'$;
\item as $m\to\infty$, 
 \[ |g_m| = \sum_{i=1}^{k_m} |r_i| + \mathcal O(1) = 2\sum_{i=1}^{k_m}\log_+|\sphericalangle(r_i)|+\mathcal O(1)\]
 where the implicit constant is independent of the given tight sequence;
\item moreover,
\begin{equation}\label{eq:unbounded}
  \limsup_{m\to\infty}\max_{1 \leq i \leq k_m}|r_i| = +\infty.
\end{equation}
\end{enumerate}
\end{prop}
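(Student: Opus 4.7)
The approach is to combine the basic contraction estimates of Proposition~\ref{prop:basic} with the tightness condition $|g_m^{(1)}| > |g_m| - 1/m$, and then feed the resulting bounds into the length formula of Lemma~\ref{lem:lengthest}. Items (1) and (2) are direct bookkeeping from the contraction estimate, items (3) and (4) are direct consequences of substituting them into the length formula.

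For items (1) and (2), apply Proposition~\ref{prop:basic}(2) and Proposition~\ref{prop:basic}(3) respectively to $g_m$. From
\[ |g_m| - 1/m < |g_m^{(1)}| \leq |g_m| - (k_m+1)(1-c)\zeta + \xi \]
one reads off $(k_m+1)(1-c)\zeta \leq 1/m + \xi \leq 1 + \xi$ for $m \geq 1$, which yields the uniform bound on $k_m$; the analogous argument with Proposition~\ref{prop:basic}(3) gives $(1-c)L_m \leq 1/m + \xi$, hence the uniform bound on $L_m$. The numerical constants $K'$ and $L'$ follow after elementary rearrangement.

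For item (3), plug the bounds just obtained into Lemma~\ref{lem:lengthest}(2):
\[ |g_m| = \sum_{i=0}^{k_m} |\ell_{i,m}| + 2\sum_{i=1}^{k_m} \log^+|\sphericalangle(r_{i,m})| + O(k_m). \]
The first sum is bounded by $(K'+1)L'$ and the error term $O(k_m)$ is bounded by $O(K')$; both absorb into $O(1)$. The resulting $O(1)$ is uniform over all tight sequences precisely because the constants $K'$ and $L'$ are. This yields the second equality. For the first equality, sum the single-segment estimate $|r_{i,m}| = 2\log^+|\sphericalangle(r_{i,m})| + O(1)$ of Lemma~\ref{lem:lengthest}(1) over the bounded number $k_m \leq K'$ of roundabouts.

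For item (4), since $|g_m| > m \to \infty$ and $k_m \leq K'$, the estimate $|g_m| \leq K' \max_i |r_{i,m}| + O(1)$ from (3) forces $\max_i |r_{i,m}| \to \infty$, which is in fact stronger than the claimed $\limsup$. I do not anticipate a serious obstacle in executing this plan; the entire proposition is bookkeeping on top of Proposition~\ref{prop:basic} and Lemma~\ref{lem:lengthest}. The only point warranting care is the uniformity of the implicit $O(1)$ constants in item (3) over the family of tight sequences, which is handled by the uniformity of the bounds from items (1)--(2).
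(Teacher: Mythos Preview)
Your proposal is correct and follows essentially the same approach as the paper: combine the tightness condition $|g_m^{(1)}|>|g_m|-1/m$ with Proposition~\ref{prop:basic}(2),(3) to bound $k_m$ and $L_m$, then feed these into Lemma~\ref{lem:lengthest} for (3) and use $|g_m|>m$ for (4). Your write-up is in fact more detailed than the paper's own proof, which dispatches all four items in three sentences.
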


\begin{proof}
The condition $|g_m^{(1)}|>|g_m|-\frac{1}{m}$ in the definition of a tight sequence and Proposition~\ref{prop:basic} immediately imply~(1) and~(2). The estimate~(3) follows from Lemma~\ref{lem:lengthest} and the estimate $k_m \leq K'$. The condition $|g_m| > m$ and the uniform bound on $L_m$ from the second above inequality (2) yield the last conclusion. 
\end{proof}

For a tight sequence $(g_m)$, a picture thus emerges of the terms $g_m$ as $m\to\infty$.  In the complement of the cusp neighborhoods, the thick parts $\ell_{i,m}$ range over finitely many possible free boundary homotopy classes (in which the endpoints are allowed to slide along the boundaries of the cusp neighborhoods). Additionally, there must also be some roundabouts $r_{i,m}$ that spend larger and larger amounts of time (length) in some fixed cusp neighborhood.  An index $i\in \{1, \ldots, K'\}$ for which $\limsup_{m\to\infty} |r_{i,m}|=\infty$ will be called an \emph{unbounded roundabout}. Thus every tight sequence has a nonempty set of unbounded roundabouts. 

The property of being a tight sequence is not \emph{a priori} preserved under lifting under $F$.  However, given a sequence $(g_m^{(n)})_m$ of $\sfX$-rays in the direction of $f$, from the two-dimensionally-indexed array $(g^{(n)}_m)_{n,m\in \N}$, it is always possible to extract a tight sequence --- and one may in fact prescribe a lower bound on the levels of lifts involved. In the statement below, the sequences referred to are sequences of natural numbers.  The precise form of the lower bound on $m(t)$ in the statement is not important here; we include it merely for concreteness. 

\begin{lem}[Robust extraction]\label{lemma:extraction}
Suppose $(g_m)$ is a tight sequence.  Given
\begin{enumerate}
\item an increasing sequence $N(t), t=1, 2, \ldots$  with $N(t)>t$, and
\item an increasing sequence $m(t), t=1, 2, \ldots$  with $m(t)\gg N(t)$ sufficiently large (specifically, $m(t)>M(t)\coloneqq N(t)+t^2(N(t)\xi + 1)+t$),
\end{enumerate}
there exists a sequence $n(t), t=1, 2, \ldots$ with $n(t)\in [N(t), M(t)]$ such that the sequence 
\[ h_t\coloneqq g_{m(t)}^{(n(t))}\in G\]
is tight.
\end{lem}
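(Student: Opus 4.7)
The plan is to reduce the claim to a pigeonhole/telescoping argument on the scalar sequence $a_n \coloneqq |g_{m(t)}^{(n)}|$, where $n$ ranges over $\{0, 1, \ldots, m(t)\}$. Two inequalities on $a_n$ are immediately available: by Proposition~\ref{prop:basic}(1), $a_n \le a_0 + n\xi$, and by tightness of $(g_m)$, $a_n > a_0 - 1/m(t)$ for all $n \le m(t)$. Unpacking the definition of tightness of $h_t \coloneqq g_{m(t)}^{(n(t))}$ yields two conditions: $|h_t| = a_{n(t)} > t$, and $|h_t^{(n)}| = a_{n(t)+n} > a_{n(t)} - 1/t$ for all $n \in \{0, \ldots, t\}$. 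The first is automatic for any $n(t) \in [N(t), M(t)]$, since $a_{n(t)} > a_0 - 1/m(t) > m(t) - 1 > t$. The whole task therefore reduces to finding $n(t) \in [N(t), M(t) - t]$ (which lies in $[N(t), M(t)]$ and leaves room for $t$ further lifts inside $[0, m(t)]$) such that $a$ drops by less than $1/t$ across the window $[n(t), n(t) + t]$.

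To produce such $n(t)$, I would argue by contradiction. Suppose no such $n(t)$ exists. Then for every $n \in [N(t), M(t) - t]$ there is some $n' \in (n, n + t]$ with $a_{n'} \le a_n - 1/t$. Starting from $n_0 \coloneqq N(t)$, one builds inductively a sequence $n_0 < n_1 < n_2 < \cdots$ with $n_{j+1} - n_j \le t$ and $a_{n_{j+1}} \le a_{n_j} - 1/t$, continuing so long as $n_j \le M(t) - t$. A telescoping sum combined with the upper bound $a_{n_0} \le a_0 + N(t)\xi$ then gives
\[ a_{n_j} \le a_0 + N(t)\xi - j/t. \]

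The lower bound $a_{n_j} > a_0 - 1/m(t)$ forces $j/t < N(t)\xi + 1/m(t)$, and in particular $j \le tN(t)\xi$. On the other hand the chain may be extended as long as $N(t) + jt \le M(t) - t$, i.e., up to $j = t(N(t)\xi + 1)$, which is strictly larger than $tN(t)\xi$. Hence the chain would force $a_{n_j}$ to violate the tightness lower bound, the desired contradiction. The specific value $M(t) = N(t) + t^2(N(t)\xi + 1) + t$ in the hypothesis is precisely engineered to make exactly this much room available.

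Conceptually the argument is elementary: a real sequence pinned inside a narrow window of width $O(N(t)\xi)$ cannot drop persistently by $1/t$ on every window of length $t$ inside an interval of length $\Omega(t^2 N(t)\xi)$. The only step demanding real care is the quantitative bookkeeping that simultaneously balances the arithmetic growth bound $a_n \le a_0 + n\xi$, the tightness lower bound $a_n > a_0 - 1/m(t)$, and the length of the allowed interval $[N(t), M(t)]$; I anticipate no deeper obstacle beyond this accounting of constants.
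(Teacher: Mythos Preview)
Your proposal is correct and takes essentially the same approach as the paper. Both arguments build a chain of successive drops of size $1/t$ starting at level $N(t)$, bound its length from above via the tightness inequality $a_n>a_0-1/m(t)$ combined with $a_{N(t)}\le a_0+N(t)\xi$, and observe that the interval $[N(t),M(t)]$ is long enough to force a longer chain, yielding the contradiction (the paper phrases this constructively rather than by contradiction, but the combinatorics is identical). Your scalar reformulation via $a_n=|g_{m(t)}^{(n)}|$ is in fact somewhat cleaner than the paper's presentation.
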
 

\begin{proof}
  We first verify that the lengths of the $h_t$'s grow as required. Since the sequence $(g_m)$ is tight, applying both conditions of Definition~\ref{defn:tight} we find
\[ |h_t|=|g_{m(t)}^{(n(t))}| > |g_{m(t)}|-\frac{1}{m(t)} > m(t)-\frac{1}{m(t)}>N(t)>t.\]

We next verify that the lengths of the lifts of the $h_t$ satisfy the required lower bound. Fix $t\in \N$.  To ease notation, put $h\coloneqq h_{m(t)}^{(N(t))}$. Consider the orbit segment of length $t$ under lifting given by 
\[ h, h^{(1)}, h^{(2)}, \ldots, h^{(t)}.\]
If for each $j\in \{1, \ldots, t\}$ we have 
\[ |h^{(j)}|>|h|-\frac{1}{t},\]
then we may set $n(t)\coloneqq N(t)$. Otherwise, there exists $j_1\in \{1, \ldots, t\}$ with 
\[ |h^{(j_1)}|\leq |h|-\frac{1}{t}.\]
We now repeat this argument, starting with $h^{(j_1)}$.  That is, we consider the orbit segment of length $t$ under lifting given by 
\[ h^{(j_1)}, h^{(j_1+1)}, h^{(j_1+2)}, \ldots, h^{(j_1+t)}.\]
If for each $j\in \{1, \ldots, t\}$ we have 
\[ |h^{(j_1+j)}|>|h|-\frac{1}{t},\]
then we may set $n(t)\coloneqq N(t)+j_1$. Otherwise, we find an integer $j_2\in \{1, \ldots, t\}$ for which 
\[ |h^{(j_1+j_2)}|\leq |h^{(j_1)}|-\frac{1}{t} \leq |h|-\frac{2}{t}.\]
Continuing in this manner, if we fail to find the desired bounds some number of times, say $s$, then we have the upper bound on lengths
\[ |h^{(j_1+j_2+\ldots+j_s)} \leq |h|-\frac{s}{t}.\]

We now proceed to bound $s$. Unwinding the definition of $h$, the previous inequality yields
\[ \left|g_{m(t)}^{(N(t)+j_1+\ldots+j_s)}\right| \leq \left|g_{m(t)}^{(N(t))}\right|-\frac{s}{t}.\]
The basic length upper bound (Proposition~\ref{prop:basic}) and the fact that $(g_m)_m$ is a tight sequence then imply 
\[ |g_{m(t)}|-\frac{1}{m(t)} < \left|g_{m(t)}^{(N(t)+j_1+\ldots+j_s)}\right| \leq |g_{m(t)}|+N(t)\xi - \frac{s}{t}.\]
This implies
\[ s \leq t(N(t)\xi + 1).\]
This bound on $s$ yields that among the sequence of lifts 
\[ h^{(1)}, h^{(2)}, \ldots, h^{(t^2(N(t)\xi+1)+t)}\]
there must be an index $i=i(t)\in \{1, 2, \ldots, t^2(N(t)\xi+1)+t\}$ such that the subsequence of consecutive lifts 
\[ h^{(i)}, h^{(i+1)}, h^{(i+2)}, \ldots, h^{(i+t)}\]
has the property that the desired lower bounds
\[ |h^{(i+j)}|>|h^{(i)}|-\frac{1}{t}, j=1\ldots t\]
hold. Putting $n(t)\coloneqq N(t)+i(t)$ and $h_t\coloneqq g_{m(t)}^{(n(t))}$, the Lemma is proven.
\end{proof}

\subsection{$m$-roundabouts}
For a tight sequence $(g_m)$, Proposition~\ref{prop:tightfacts} implies that as $m\to\infty$, the number of roundabouts is universally bounded independently of the chosen sequence by the constant $K'$, and that the lengths of some---but perhaps not all---of these roundabouts must become unbounded, and so (by Lemma~\ref{lem:lengthest}) spiral more and more around some cusps. We wish our decompositions to focus only on these cusps, and not on cusps for which the spiraling is bounded. 

Choose and fix a large \emph{roundabout parameter} $\mu\gg\zeta$.

\begin{defn}[$\mu$-roundabout]\label{defn:muroundabout}
  Consider $g\in G$. The \emph{$(\delta,\zeta)$-thick-thin decomposition with roundabout parameter $\mu$} is given by 
\[ g = \ell_0.r_1\ldots r_k.\ell_k\]
where 
\begin{enumerate}
\item each $r_i$ is a roundabout in the $(\delta, \zeta)$-thick-thin decomposition;\label{defn:muroundabout:1}
\item $|r_i| \geq \mu$ for each $1 \leq i \leq k$;\label{defn:muroundabout:2}
\item the $r_i$ are maximal with respect to~\eqref{defn:muroundabout:1} and~\eqref{defn:muroundabout:2}.
\end{enumerate}
\end{defn} 
We call the $r_i$ the $\mu$-\emph{roundabouts} or $\mu$-\emph{thin} parts, and the $\ell_i$'s the $\mu$-thick parts. In this notation, we say that $g$ has \emph{$\mu$-roundabouts} $r_1,\dots,r_k$. The estimate used in the proof of Lemma~\ref{lem:lengthest} then implies that if $|r_i| \ge \mu$, then $r_i$ winds at least $\sphericalangle(r_i)\approx \exp(\mu)$ times around the corresponding puncture.

Now suppose $(g_m)$ is a tight sequence and $\mu\gg1$ is a roundabout parameter.   Applying Proposition~\ref{prop:tightfacts}, we find the following. Since every $\mu$-roundabout is an ordinary roundabout, and the number of ordinary roundabouts is uniformly bounded by the constant $K'$ of that Proposition, we have 
\[ \limsup_{m\to\infty}\#\{\text{$\mu$-roundabouts in }g_m\} <\infty.\]
Obviously this limit decreases as $\mu$ increases, and it cannot decrease to zero, since every tight sequence has at least one unbounded roundabout.  Thus 
\begin{equation}
\label{eq:k_g}
k((g_m))\coloneqq\lim_{\mu\to\infty}\limsup_{m\to\infty}\#\{\text{$\mu$-roundabouts in }g_m\}\in [1, K'].
\end{equation}
In other words, $k=k((g_m))$ is the smallest integer such that the $g_m$'s eventually have at most $k$ unbounded roundabouts. From Proposition~\ref{prop:tightfacts}, we know $k \leq K'$. 

\subsection{Very tight sequences}
Proposition~\ref{prop:tightfacts} provides a universal upper bound on the number of unbounded roundabouts in a tight sequence.  To capture the extremal case, we set 
\[ K\coloneqq\sup\{k((g_m)) : (g_m) \text{ is tight}\} \leq K'<\infty.\]
The definition below captures the extremal tight sequences with respect to this bound. 

\begin{defn}[Very tight sequences]\label{defn:verytight}
  A \emph{very tight sequence} is a tight sequence $(g_m)$ in $G$ such that, additionally, the number of $m$-roundabouts in $g_m$ is precisely the stable extremal number, $K$, of unbounded roundabouts whenever $m$ is sufficiently large.
\end{defn}

Note that if $(g_m)_m$ is any tight sequence for which $k((g_m))=K$ realizes the extremal number of unbounded roundabouts, then by passing to subsequences we may assume that in fact the sequence $(g_m)_m$ is very tight. Summarizing: if Theorem~\ref{thm:main} fails, then there is a very tight sequence.  

We will often make claims on terms $g_m$ that hold whenever $m$ is sufficiently large; we abbreviate this as ``something holds eventually for $g_m$''. Thus in a very tight sequence eventually the number of $m$-roundabouts of $g_m$ is precisely $K$ .

\begin{lem}\label{lemma:lambda}
  There is a constant $\lambda>0$ such that, for every very tight sequence $(g_m)_m$, eventually all the thick segments $\ell_{m,i}$, for $i\in\{0,\dots,K\}$, of the $(\delta,\zeta)$-thick-thin decomposition of $g_m$ with roundabout parameter $m$ have length at most $\lambda$.
\end{lem}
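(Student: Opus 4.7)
I plan to prove this by contradiction, leveraging the definition of $K$ as the supremum of $k((g_m))$ over tight sequences. Suppose no such uniform $\lambda$ exists; then for each $j \in \N$ there is a very tight sequence $(g^j_m)_m$ and arbitrarily large indices $m$ with some $m$-thick segment $\ell_{m,i}$ of length exceeding any threshold I care to choose.

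The first step is to analyze what an $m$-thick segment looks like in terms of the underlying $(\delta,\zeta)$-thick-thin decomposition of Proposition~\ref{prop:thickthinloop}. An $m$-thick segment $\ell_{m,i}$ is a maximal concatenation of (i) ordinary thick segments $\ell_a$ and (ii) roundabouts of $g_m$ of length less than $m$, i.e.\ non-$m$-roundabouts. By Proposition~\ref{prop:tightfacts}(1)--(2), each ordinary thick segment has length at most $L'$, and the total number of roundabouts in $g^j_m$ is at most $K'$; since by very-tightness exactly $K$ of these are $m$-roundabouts eventually, at most $K'-K$ non-$m$-roundabouts appear in $g^j_m$. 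Thus $\ell_{m,i}$ decomposes into at most $2(K'-K)+1$ pieces, a uniform bound. It follows by pigeonhole that once $|\ell_{m,i}|$ exceeds a prescribed linear function of $j$ (explicitly, $(K'-K+1)L' + (K'-K)j$), some non-$m$-roundabout inside it must have length $> j$ while remaining $< m$.

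The second step is a diagonal extraction followed by the desired contradiction. For each $j$, pick $m_j \geq j$ large enough that the previous paragraph applies, producing a non-$m_j$-roundabout $r'_j$ in $g^j_{m_j}$ of length $> j$. Define $h_j \coloneqq g^j_{m_j}$. The bounds $|h_j| > m_j \geq j$ and $|h_j^{(n)}| > |h_j| - 1/m_j \geq |h_j| - 1/j$ for $n \in \{0,\dots,j\}$ follow immediately from the tightness of $(g^j_m)_m$ together with $m_j \geq j$, so $(h_j)_j$ is itself a tight sequence. The $(\delta,\zeta)$-decomposition of $h_j$ contains the $K$ original $m_j$-roundabouts, each of length $\geq m_j \geq j$, and in addition the extra roundabout $r'_j$ of length $> j$ — yielding at least $K+1$ roundabouts of length at least $j$. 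For any fixed $\mu$, once $j \geq \mu$ every such roundabout is automatically a $\mu$-roundabout, so $\#\{\mu\text{-roundabouts in }h_j\}\ge K+1$, whence $k((h_j)) \geq K+1$. This contradicts the definition of $K$ as the supremum of $k((g_m))$ over all tight sequences.

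The main obstacle I foresee is the combinatorial bookkeeping in the first step: extracting a \emph{genuine} unbounded roundabout from a long $m$-thick segment relies crucially on the uniform upper bound $K'$ on the total number of roundabouts from Proposition~\ref{prop:tightfacts}. Without that bound, length in $\ell_{m,i}$ could be spread over unboundedly many short roundabouts, and the pigeonhole would fail — so the whole argument rides on first having established the universal bound on $k_m$.
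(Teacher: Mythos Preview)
Your proof is correct and follows essentially the same strategy as the paper's: negate, use Proposition~\ref{prop:tightfacts} to bound the number and length of the ordinary thick pieces inside a long $m$-thick segment, pigeonhole to extract a long hidden roundabout, and diagonalize to produce a tight sequence with $K+1$ unbounded roundabouts, contradicting the extremality of $K$. Your counting is slightly sharper (you use at most $K'-K$ non-$m$-roundabouts rather than the paper's cruder bound of $K'$), and your verification that the diagonal sequence $(h_j)_j$ is tight is made explicit where the paper leaves it implicit, but the architecture of the two arguments is the same.
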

\begin{proof}
  Assume the contrary.  Let $K$ be the stable bound on the number of roundabouts as in Definition~\ref{defn:verytight}. Then for every lower length bound $n\in\N$, there is a very tight sequence $(g_{m,n})_m$ satisfying the condition:  \emph{for all sufficiently large $m$, in the thick-thin decomposition of $g_{m,n}$ with roundabout parameter $m$,
   \[ g_{m,n}=\ell_{m,n,0} . r_{m,n,1} \ldots r_{m,n,i} . \ell_{m,n,i}\ldots \ell_{m,n,K},\]
some $m$-thick segment $\ell_{m,n,i}$ has length at least $n$.}  A word about notation here: $n$ does not denote a level of iteration. 

Fix now $n$ large, and consider  the ordinary $0$-thick decomposition of Proposition~\ref{prop:thickthinloop}, applied to $g_{m,n}$. We find that the long subsegment $\ell_{m,n,i}$ must be further decomposed, say as 
\[ \ell_{m,n,i}=L_0 . R_0\ldots L_q .R_q.\]
By Proposition~\ref{prop:tightfacts}, we have $q\leq K'$, a universal constant, and $|L_j| \leq L'$ for each $j=0, \ldots, q$. Since $|\ell_{m,n,i}|>n$ we have then that $\sum_{j=0}^q |R_j| > n-K'L'$ and so some $R_j$ satisfies $|R_j| > \frac{n-K'L'}{K'}\to\infty$ as $n\to\infty$.  

Now we extract a diagonal subsequence $(g_p)_p, p\in \N$ from the $(g_{m,n})$. Given $p\in \N$, choose first $n=n(p)$ so large that $\frac{n-K'L'}{K'} > p$, and then from the very tight sequence $(g_{m,n})_m$, choose $m=m(n)$ so large that in the $p$-thick-thin decomposition of the $m$th term $g_{m,n}$, we have some $p$-thick-subsegment $\ell_{m,n,i}$ (as in the previous paragraph) with length at least $p$. But then $(g_p)_p$ is a tight sequence with at least $K+1$ unbounded roundabouts: there are $K$ unbounded roundabouts in the original decomposition, and one more developing in the $\ell_{m(p), n(p), i}$'s. This contradicts the assumption that the sequence $(g_m)$ was assumed to be very tight, thus realizing the maximal number of unbounded roundabouts. 
\end{proof}

\noindent Summarizing:
\begin{lem}[Lengths in decomposition of very tight sequences]\label{lemma:ldvt}
  There are constants $\lambda>0$ and $K\in \{1, 2, 3, \ldots\}$ such that the following hold. Suppose $(g_m)$ is very tight, and 
\[ g_m = \ell_{0,m}.r_{1,m}.\ell_{1,m}\ldots r_{K,m}.\ell_{K,m}\]
is the $(\delta, \zeta)$-thick-thin decomposition with roundabout parameter $m$ of its $m$th term. Then for each $m$, 
\[ |\ell_{i,m}| < \lambda\text{ for all } i=0,\ldots,K,\]
and since $(g_m)$ is very tight,
\[ |r_{i,m}| >m\text{ for all } i=1,\ldots, K.\]
\end{lem}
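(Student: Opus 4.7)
The plan is to simply assemble the statement from already-established ingredients, since the lemma is essentially a bookkeeping summary of the previous work in the subsection. First, take $K$ to be the universal constant defined just before the definition of very tight sequence, namely the supremum over all tight sequences $(g_m)$ of the stable extremal number of unbounded roundabouts $k((g_m))$; by Proposition~\ref{prop:tightfacts}, this satisfies $K\le K'<\infty$. Take $\lambda$ to be the universal constant produced by Lemma~\ref{lemma:lambda}.

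Next, unpack the definition of very tight. Since $(g_m)_m$ is very tight, for all sufficiently large $m$ the $(\delta,\zeta)$-thick-thin decomposition of $g_m$ with roundabout parameter $\mu=m$ has exactly $K$ of the $\mu$-roundabouts. This gives the claimed form
\[ g_m=\ell_{0,m}.r_{1,m}.\ell_{1,m}.\ldots.r_{K,m}.\ell_{K,m} \]
for all $m$ large enough (one may discard the initial finite bad portion of the sequence without affecting the conclusion).

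The length bound on the thick pieces is immediate from Lemma~\ref{lemma:lambda}: that lemma provides $\lambda$ so that eventually $|\ell_{i,m}|\le\lambda$ for each $i\in\{0,\dots,K\}$. The length bound on the roundabouts is immediate from Definition~\ref{defn:muroundabout} with $\mu=m$: each $r_{i,m}$ is an $m$-roundabout, hence by definition $|r_{i,m}|\ge m$, which certainly tends to infinity as $m\to\infty$.

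There is no genuine obstacle here; the only point one has to be careful about is that the two constants $\lambda$ and $K$ are universal---depending only on the admissible correspondence, the thick-thin parameters $\delta,\zeta$, the collection $\sfX$, and the contraction factor $c$, but not on the particular very tight sequence chosen. This is built into the statements of Lemma~\ref{lemma:lambda} and Proposition~\ref{prop:tightfacts} (the latter providing the universal bound $K'$ behind the definition of $K$), so both $\lambda$ and $K$ can be fixed once and for all before any very tight sequence is introduced.
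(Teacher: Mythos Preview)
Your proposal is correct and matches the paper's treatment exactly: the paper presents this lemma with the word ``Summarizing:'' and gives no separate proof, since the constant $K$ comes from the definition of very tight, the bound $|\ell_{i,m}|<\lambda$ is Lemma~\ref{lemma:lambda}, and $|r_{i,m}|\ge m$ is built into Definition~\ref{defn:muroundabout} with $\mu=m$. Your care in noting that $\lambda$ and $K$ are universal (independent of the particular very tight sequence) is exactly the point.
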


That is, in a very tight sequence, eventually the terms consist of $K+1$ ``short'' segments in the thick part (each of length at most $\lambda$), separated by precisely $K$ roundabouts that are becoming unbounded at rate at least $m$. We next show that, to some extent, this structure persists along the terms $g_m^{(n)}$ of the $\sfX$-ray above each $g_m$, at least when $m\gg n$.

\subsection{Roundabouts lift to roundabouts}\label{subsecn:liftingunder}
Let $(g_m)$ be a very tight sequence; it has precisely $K$ unbounded roundabouts, by definition.  In this subsection, we show that for each iterate $n=1, 2, 3, \ldots$, the sequence $(g_m^{(n)})_m$ also has precisely $K$ unbounded roundabouts.

Fix $n\in \N$. Let $\phi^{(n)}, \rho^{(n)}\colon \cT^{(n)} \rightrightarrows\cS$ be the $n$th iterate of the correspondence.  Consider $m\gg n$.  In the remainder of this paragraph, to ease notation, when denoting paths, we temporarily drop sub- and super-scripts indicating the dependence on $m$ and $n$.  Suppose $g\coloneqq g_m$ has $(\delta, \zeta)$-thick-thin-decomposition with roundabout parameter $m$ given by a concatenation of geodesic segments 
 \[ g=\ell_0.r_1\ldots r_K . \ell_K.\]
 When we lift $g$ as a path under $\phi^{(n)}$, since  $\phi^{(n)}$ is a cover, it is a local isometry, so we get a lifted path which is also given by a concatenation of geodesic segments
 \[ \tilde{g} =\tilde{\ell}_0 . \tilde{r}_1\ldots \tilde{r}_K . \tilde{\ell}_K.\]
 Since $\phi^{(n)}$ is a local isometry, and $(g_m)$ is very tight, we have 
 \[ \lambda \geq |\ell_i|=|\tilde{\ell}_i|\text{ for all } i=0\ldots K\]
 and
 \[ m \leq |r_i|=|\tilde{r}_i|\text{ for all } i=1\ldots K\]
 where $\lambda$ is the constant of Proposition~\ref{lemma:lambda} and the indicated inequalities hold for all $m\geq n$. The above decomposition of the path $\tilde{g}$ will typically  not be a $(\delta, \zeta)$-thick-thin decomposition in $\cT^{(n)}$ --- indeed, $\tilde{g}$ need not be a closed path.  We delay until the next subsection the analysis of these effects, in favor of giving first some preliminary, simple observations.  
 
 For this paragraph, we re-introduce the notation giving the dependence on $m$.  Recalling that we have fixed the number of iterates $n$, we use super- and sub-tilde's to indicate the dependence on this fixed level. When we map $\tilde{g}_m \subset \cT^{(n)}$ to $\cS$ under $\rho^{(n)}$, we get an expression as a concatenation of segments --- now not necessarily geodesics:
 \[ \rho^{(n)}(\tilde{g}_m)\eqqcolon\undertilde{g}_m=\undertilde{\ell}_{m,0} . \undertilde{r}_{m,1} . \cdots . \undertilde{r}_{m,K} . \undertilde{\ell}_{m,K}\] 
In the next subsection, we will explore the difference. By definition, the element $g_m^{(n)}\in G$ in the definition of $\sfX$-ray is represented by the loop $\undertilde{g}_m.\overline{\undertilde{\sfx}_m}$, where $\undertilde{\sfx}_m$ is a concatenation of lifts of elements of $\sfX$ under iterates $F^{j}$ for $j=1,\ldots, n$.  Since the maps $\rho^{(j)}\colon \cT^{(j)}\to\cS$ are contractions for all $j=1,\ldots, n$,  the length of the path $\undertilde{\sfx}_m$ is at most $n\xi$.  Since $(g_m)$ is very tight, it is tight, so we conclude 
\begin{equation}\label{eq:nth}
m-\frac{1}{m} \leq |g_m|-\frac{1}{m} \leq |g_m^{(n)}| \leq |\undertilde{g}_m.\overline{\undertilde{\sfx}_m}| \leq |g_m|+n\xi.
\end{equation}
Summarizing: for fixed $n$, as $m\to\infty$, the loops $\undertilde{g}_m.\undertilde{\sfx}_m\subset \cS$ join $\star$ to itself, are getting longer and longer, and when pulled tight to a geodesic joining $\star$ to itself, yield a curve that is at most an additive constant (say $C\coloneqq n\xi + 1)$, independent of $m$, shorter. Proposition~\ref{prop:slippage} then implies that when both $\undertilde{g}_m.\undertilde{\sfx}_m$ and its geodesic representative $g^{(1)}_m$ are parameterized by arc length in $\cS$, the image points under these parametrizations are at most distance $C$ apart, and in fact most of the time they are actually much closer. 

In this paragraph, we set up some notation that we will need later. Let $x_1,\ldots, x_K$ be the sequence of cusps surrounded by the $m$-roundabouts $r_1, \ldots, r_K$ of $g_m$, in the order in which they occur.  Note that some cusps may be visited more than once. Lifting under $\phi^{(n)}$, we obtain a sequence of punctures $\tilde{x}_1, \ldots, \tilde{x}_K$ in $\cusps{\cT^{(n)}}$.  Mapping under $\rho^{(n)}$, we obtain a sequence of points $\undertilde{x}_1, \ldots, \undertilde{x}_K$ in $\overline{\cS}$.  

Our next goal is to show that each point $\undertilde{x}_i$ is in fact a cusp of $\cS$.  Along the way, we introduce further notation.

Write $d^{(n)}_i=\deg(\phi^{(n)}, \tilde{x}_i)$, $c^{(n)}_i=\deg(\rho^{(n)}, \tilde{x}_i)$, and put $t^{(n)}_i\coloneqq c^{(n)}_i/d^{(n)}_i$.  Let $B_i\coloneqq B(x_i,\delta) \subset \cS$ be the cusp neighborhood of $x_i$;  by the definition of $(\delta, \zeta)$-thick-thin decomposition, its bounding horocycle curve has length $\delta$.  
Let $\tilde{B}_i \subset \cT^{(n)}$ be the component of the preimage of $B_i$ under $\phi^{(n)}$ surrounding $\tilde{x}_i$, and $\undertilde{B}_i \subset \cS$ its image under $\rho^{(n)}$.  We now claim that in fact each point $\undertilde{x}_i$ is a cusp of $\cS$.  For suppose for some $i$ that $\undertilde{x}_i$ were a regular point of $\cS$, instead of a cusp. The we can factor $\rho^{(n)}$ as a composition of maps of hyperbolic Riemann surfaces:
\[ \rho^{(n)}\colon \cT^{(n)} \hookrightarrow \cT^{(n)}\cup\{\tilde{x}_i\}\to\cS.\]
The second map is a   contraction; let us show that near $\tilde{x}_i$, the inclusion is very strongly contracting on $\tilde{B}_i$. It is a general fact that when including one Riemann surface into another, the amount of contraction incurred at a point $z$ is a continuous function bounded above away from $1$ by the hyperbolic distance from $z$ to a point in the complement (compare~\cite{ctm:renorm}*{Theorem~2.25}). In the surface $\cT^{(n)} \cup \{\tilde{x}_i\}$, the Jordan region $\tilde{B}_i \cup\{\tilde{x}\}$ has boundary of length at most $d_i^{(n)}\delta$, and hence it has diameter at most $d^{(n)}_i\delta$ as well. It follows that there is a constant $0<u<1$ such that $|\undertilde{r}_i| < u|\tilde{r}_i|=u|r_i|$.  Equivalently, $|\undertilde{r}_i| < |r_i| - (1-u)|r_i|$. Summing over indices $i$, we have by~\ref{eq:nth}  that up to additive constants depending on $n$ but not $m$, we have the estimate 
\[ |g_m|=|r_1|+\ldots+|r_k| + \mathcal O_n(1)  = |g_m^{(n)}|=|\undertilde{r}_1| + \ldots + |\undertilde{r}_k| + \mathcal O_n(1)\]
as $m\to\infty$. 
But this is impossible if for some $i$ we have $|\undertilde{r}_i| < |r_i|-(1-u)|r_i|$, since very tight implies $|r_i|\to\infty$ as $m\to\infty$. 

The fact that the $\undertilde{x}_i$'s are cusps leaves open the possibility that the lifts 
$\undertilde{x}_i$, $\undertilde{x}_{i+1}$ of two consecutive cusps might be the same; that is, the lift $\undertilde{\ell}_{m, i+1}$ might be contained in a cusp neighborhood of $\undertilde{x}_i$. We now rule this out. Were this to happen, we would have  
  \[ \log(\sphericalangle |r_{i,m}^{(n)}| + \sphericalangle |r^{(n)}_{i+1,m}|) \ll \log(\sphericalangle |r_{i,m}^{(n)}| )+ \log(\sphericalangle |r^{(n)}_{i+1,m}|) \]
again implying substantial drop in length, which is impossible. 

In the next section, we analyze the lengths of the thick and thin parts in $g_m^{(n)}$.  

\subsection{How thick and thin lengths change under lifting}\label{subsecn:change}
We continue the discussion and notation of the previous subsection.
The estimate~\eqref{eq:nth} implies that $|g_m^{(n)}| = |\undertilde{g}_m.\overline{\undertilde{\sfx}_m}|+\mathcal O_n(1)$ as $m\to\infty$, where the implicit constant depends on $n$ but not $m$.  Proposition~\ref{prop:slippage} then implies that the loop $\undertilde{g}_m.\overline{\undertilde{\sfx}_m}$ is uniformly $C_n$-close, independently of $m$ (but depending on $n$) to the geodesic $g_m^{(n)}$.


\begin{lem}[Lifting very tight sequences]\label{lem:tightlift}
  Suppose $(g_m)$ is a very tight sequence, and fix an iterate $n\in \N$.  Suppose that the $(\delta, \zeta)$-thin-thick decomposition with roundabout parameter $m$ of the geodesic loop $g_m$ is
  \[ g_m=\ell_{0,m}.r_{1,m}\ldots r_{K,m}.\ell_{K,m}\]
  and that the one of the geodesic loop $g_m^{(n)}$ is
    \[g_m^{(n)}=\ell_{0,m}^{(n)}.r_{1,m}^{(n)}\ldots r_{K,m}^{(n)}.\ell_{K,m}^{(n)}.\]
Then
\begin{enumerate}
\item the lengths of the roundabouts $r_{i,m}^{(n)}$ at level $n$ satisfy 
\[ |r_{i,m}^{(n)}| = |r_{i,m}| + 2\log t^{(n)}_{i,m} + o_n(1)\]
as $m\to\infty$, with implicit constant tending to zero depending on $n$;
\item after passing to subsequences (specifically, by choosing any increasing sequence $m(t)$ satisfying  $m(t)>t+C_n$ where $C_n=\max_i 2\log t^{(n)}_{i,m}$), for each $i$ we have
\[ |r_{i,m}^{(n)}| > m;\]
\item the lengths of the thick parts $\ell_{i,m}^{(n)}$ at level $n$ satisfy
\[ |\ell_{i,m}^{(n)}| < |\ell_{i,m}|+\nu_n+\mathcal O_n(1)<\lambda+\nu_n+\mathcal O_n(1),\]
as $m\to\infty$, with $\nu_n\coloneqq2\log(\delta/\delta')+n\xi$.
\end{enumerate}
\end{lem}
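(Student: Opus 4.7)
The plan is to use Proposition~\ref{prop:slippage} to read off the $(\delta,\zeta)$-thick-thin decomposition of the geodesic $g_m^{(n)}$ from that of the comparison loop $\undertilde{g}_m.\undertilde{\sfx}_m$, which represents the same class in $\pi_1(\cS,*)$ and, by~\eqref{eq:nth}, differs in length by at most a constant $C_n$ depending only on $n$. Slippage then yields a $(1,O_n(1))$-quasi-isometric alignment of the two arc-length parametrizations. The decomposition $\undertilde{g}_m.\undertilde{\sfx}_m=\undertilde{\ell}_0.\undertilde{r}_1.\cdots.\undertilde{r}_K.\undertilde{\ell}_K.\undertilde{\sfx}_m$ winds around the cusps $\undertilde{x}_i\in\cusps{\cS}$ established in \S\ref{subsecn:liftingunder}. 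Working in the local model~\eqref{eqn:dwind} and tracking winding through $w=z^d$ (for $\phi^{(n)}$) and $w=z^c$ (for $\rho^{(n)}$) gives $\sphericalangle(\undertilde{r}_i)=t_i^{(n)}\sphericalangle(r_i)+O(1)$; since $\sphericalangle(r_i)\to\infty$ with $m$, these windings diverge. Combined with the uniform bounds $|\undertilde{\ell}_i|\le|\ell_i|\le\lambda$ (weak contraction of $\rho^{(n)}$ and Lemma~\ref{lemma:ldvt}) and $|\undertilde{\sfx}_m|\le n\xi$, this forces, for $m$ large, the $(\delta,\zeta)$-decomposition of $g_m^{(n)}$ to contain exactly $K$ roundabouts $r_i^{(n)}$ encircling $\undertilde{x}_i$, with $\sphericalangle(r_i^{(n)})=t_i^{(n)}\sphericalangle(r_i)+O_n(1)$.

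For part~(1), I sharpen Lemma~\ref{lem:lengthest} via the asymptotic $\arcsinh(x)=\log(2x)+o(1)$ as $x\to\infty$, obtaining $|r|=2\log\sphericalangle(r)+2\log\delta+o(1)$ whenever $\sphericalangle(r)\to\infty$. Applied to $|r_i|$ and $|r_i^{(n)}|$ and subtracted,
\[
|r_i^{(n)}|-|r_i|=2\log\!\left(t_i^{(n)}+\frac{O_n(1)}{\sphericalangle(r_i)}\right)+o(1)=2\log t_i^{(n)}+o(1),
\]
since $\sphericalangle(r_i)\to\infty$ absorbs the interior error into the $o(1)$. Part~(2) is then immediate: with a constant $C_n$ controlling the error in~(1), choosing $m(t)>t+C_n$ and invoking $|r_{i,m(t)}|\ge m(t)$ from Lemma~\ref{lemma:ldvt} yields $|r_{i,m(t)}^{(n)}|>t$, giving $|r_{i,m}^{(n)}|>m$ after relabeling the subsequence.

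For part~(3), a thick segment $\ell_i^{(n)}$ of $g_m^{(n)}$ runs in $\cS\setminus\bigcup_j B(\undertilde{x}_j,\delta)$, and its matched portion of $\undertilde{g}_m.\undertilde{\sfx}_m$ is made up of: the image $\undertilde{\ell}_i$ of the corresponding thick segment of $g_m$, with $|\undertilde{\ell}_i|\le|\ell_i|\le\lambda$ by weak contraction of $\rho^{(n)}$ and Lemma~\ref{lemma:ldvt}; at most the whole of $\undertilde{\sfx}_m$, of length $\le n\xi$; and two short boundary-adjustment arcs bridging $\rho^{(n)}(\partial B(\tilde{x}_i,d_i^{(n)}\delta))\subset B(\undertilde{x}_i,\delta')$ (the inclusion coming from \S\ref{subsecn:behavior}) out to $\partial B(\undertilde{x}_i,\delta)$, each of hyperbolic length at most $\log(\delta/\delta')$ by horocycle geometry. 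Adding the slippage error yields $|\ell_i^{(n)}|\le|\ell_i|+2\log(\delta/\delta')+n\xi+o(1)=|\ell_i|+\nu_n+o(1)\le\lambda+\nu_n+o(1)$.

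The hardest step is the matching in Step~1: Proposition~\ref{prop:slippage} only provides an $O_n(1)$-alignment, whereas part~(1) demands $o(1)$-precision. The gap is bridged because winding numbers are topologically stable invariants that diverge with $m$, making misalignment or extraneous roundabouts geometrically impossible once $m$ is large enough. This separation of scales---bounded slippage against diverging windings---is what converts the coarse slippage bound into the sharp $o(1)$ comparison.
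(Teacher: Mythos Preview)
Your overall strategy---Proposition~\ref{prop:slippage} plus winding-number tracking in the local cusp model, then the sharpened Lemma~\ref{lem:lengthest}---matches the paper's. The gap is that you apply the local model~\eqref{eqn:dwind} directly to the full roundabout $r_i$, but that model is only valid on the smaller disk $|z|<(s')^{1/d}$ corresponding to the $\delta'$-cusp neighborhood (see \S\ref{subsecn:behavior}); the segment $r_i$ lives in the larger $B(x_i,\delta)$. The same oversight produces the backwards inclusion in your part~(3): from \S\ref{subsecn:behavior} one only knows $\rho^{(n)}\big(B(\tilde x_i,d_i^{(n)}\delta')\big)\subset B(\undertilde x_i,\delta)$, not that $\rho^{(n)}\big(\partial B(\tilde x_i,d_i^{(n)}\delta)\big)\subset B(\undertilde x_i,\delta')$. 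In fact the image of the larger horoball boundary may spill outside $B(\undertilde x_i,\delta)$ altogether, so your ``boundary-adjustment arcs'' are not guaranteed to stay in the thin part, and the bound $\log(\delta/\delta')$ for them is unjustified as stated.

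The paper repairs both issues with a single device: it further decomposes each roundabout as $r_i=r_i'.r_i''.r_i'''$, with $r_i''$ the portion inside $B(x_i,\delta')$ and $r_i',r_i'''$ the nearly-radial segments in the annulus $B(x_i,\delta)\setminus\overline{B(x_i,\delta')}$, each of length $\log(\delta/\delta')+o(1)$. The winding estimate is then carried out on $r_i''$ (where the model legitimately applies, giving $\sphericalangle(\undertilde r_i'')=t_i^{(n)}\sphericalangle(r_i'')+O(1)$ with a universal constant), and the images $\undertilde r_i''',\undertilde r_{i+1}'$---each of length at most $\log(\delta/\delta')$ by weak contraction---become the honest boundary-adjustment arcs you were reaching for: $\ell_i^{(n)}$ lies in a $C$-neighborhood of $\undertilde r_i'''.\undertilde\ell_i.\undertilde r_{i+1}'$. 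With this refinement your argument goes through, and the term $2\log(\delta/\delta')$ in $\nu_n$ acquires its correct provenance.
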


\begin{proof} We begin by proving (1).  We first prove a distortion estimate for classical winding numbers under application of an $h$ as in~\eqref{eqn:dwind}.  Suppose $r$ is a rectifiable path in $\{|z|<s'\}$ joining two points $A$ and $B$. We have 
\[ 2\pi \sphericalangle(h\circ r) =\int_{h\circ r}d(\arg(z)) =\Im\int_r d(\log(h(z))=\Im\left(\int_r dz/z + d\log(1+zg(z))\right).\]
The second term of the last integral is exact, so we obtain 
\begin{equation}\label{eq:univwind}
2\pi\sphericalangle(h\circ r)=2\pi\sphericalangle(r)+\Im\log\left(\frac{1+Bg(B)}{1+Ag(A)}\right)=2\pi\sphericalangle(r)+\mathcal O(1),
\end{equation}
where the implicit constant may be taken to be at most $\log(3)$ in magnitude and is thus universal.  

Fix an index $m \in \N$ for a term in a tight sequence. In this paragraph, we introduce a refined decomposition of the geodesic $g_m$, focusing on the behavior near the cusps.   
We suppress dependence on $m$, to ease notation.  Let $x_1, \ldots, x_K$ be the cusps visited by $g_m$. 
Consider the $(\delta,\zeta)$-thick-thin decomposition of the geodesic $g_m$ with roundabout parameter $m$ given by 
\[ g_m=\ell_{0}.r_{1}\ldots r_{K} . \ell_{K}.\]
Let $\delta'<\delta$ be the constant defined in~\S\ref{subsecn:behavior}; it depends on $n$.
For each $i\in \{1, \ldots, K\}$, we further decompose 
\[ r_{i} = r_{i'} . r_{i''} . r_{i'''}\]
where $r_{i''}$ is the intersection of $r_{i}$ with the cusp neighborhood $B(x_i,\delta')$ and the $r_i', r_i'''$ are the two complementary segments.  For large $m$, the segments $r_i', r_i'''$ in the annular cusp region $B(x_i, \delta) \setminus \overline{B}(x_i,\delta')$, viewed in the natural coordinates in the disk $\ID$ given by $\beta_{x_i}$, are nearly radial; in the upper-half-plane model, these subsegments are nearly vertical. Thus as $m\to\infty$ we have 
\begin{equation}\label{eqn:rprimes}
 |r_i'|=|r_i'''|=\log(\delta/\delta')+o(1)\; \text{and} \; |r_i''|=|r_i|-2\log(\delta/\delta')+o(1).
 \end{equation}
 
 We now lift this decomposition by $F^{(n)}$.  Applying the definition of $\sfX$-ray, and suppressing the superscript $n$, we get  
\[g_m^{(n)} \simeq\undertilde{g}_m.\undertilde{\sfx}_m = (\undertilde{\ell}_{0} .\undertilde{r}_1').\undertilde{r}_{1}''.(\undertilde{r}_1'''.\undertilde{\ell}_1.\undertilde{r}_2') \ldots \undertilde{r}_{K}'' . (\undertilde{r}_K'''.\undertilde{\ell}_{K}.\undertilde{\sfx}).\]

In the next paragraph, we estimate the winding numbers of the $r_i^{(n)}$ in terms of the winding numbers of the $\undertilde{r}_i''$. See Figure~\ref{fig:pencil}.

\begin{figure}
\begin{center}
\begin{tikzpicture}[>=stealth']
    \foreach\y in {0,2.5,4,5.5} { \draw (-0.5,\y) -- ++(3,0) +(-0.15,-0.15) -- +(0.15,0.15) +(0.85,-0.15) -- +(1.15,0.15) +(1,0) -- +(4,0); }
    \node at (6.8,0) {\LARGE $\R$};
    \node at (-1,7) {\LARGE $\IH$};
    \foreach\x in {0,6} {\draw[dashed] (\x,-0.3) -- (\x,2.5); \draw (\x,2.5) -- (\x,5.5); 
    \draw[dashed] (\x,5.5) -- (\x,6.3);
    \draw (\x,0) -- +(-2,6) (\x,0) -- +(2,6);
    }
    \draw[dashed] (1,1.5) -- ++(0,2.2);
    \draw[dashed] (5,1.5) -- ++(0,2.2);
    \draw[decorate,decoration={zigzag,amplitude=0.5mm}] (1,3.7) -- +(0.5,2.2) (5,3.7) -- +(-0.5,2.2);
    \node (A) at (3,0.75) {$\sphericalangle r_i^{(n)}$};
    \draw[->] (A) -- +(-3,0); \draw[->] (A) -- +(3,0);
    \node (B) at (3,1.5) {$\sphericalangle {\undertilde r''_i}^{(n)}$};
    \draw[->] (B) -- +(-2,0); \draw[->] (B) -- +(2,0);
    \node (C) at (3,6.5) {${\undertilde r''_i}^{(n)}$};
    \node (D) at (3,7.5) {$r_i^{(n)}$};
    \draw[->] (C) .. controls +(-0.7,0) and +(0,0.7) .. (1.5,5.8);
    \draw[->] (C) .. controls +(0.7,0) and +(0,0.7) .. (4.5,5.8);
    \draw[->] (D) .. controls +(-1,0) and +(0,1) .. (0,6.3);
    \draw[->] (D) .. controls +(1,0) and +(0,1) .. (6,6.3);
    \foreach\y/\t in {2.5/,4/',5.5/''} {\node at (6.5,\y+0.3) {$1/\delta\t$}; }
\end{tikzpicture}
\end{center}
\caption{} 
\label{fig:pencil}
\end{figure}

Fix $i\in \{1, \ldots, K\}$.  Set $x\coloneqq x_i$ and $y\coloneqq\undertilde{x}_i^{(n)}$.  The implicit constants below hold as $m\to\infty$, though they do depend on $n$. Recall  that for large $m$, the segments $r_i', r_i'''$ in the annular cusp region $B(x, \delta) \setminus \overline{B}(x,\delta')$, viewed in the natural coordinates in the disk $\ID$ given by $\beta_x$, are nearly radial.  Thus as $m\to\infty$ their classical winding numbers, which coincide with those as defined in~\S\ref{subsecn:windingnumbers}, satisfy 
\[ \sphericalangle r_i=\sphericalangle r_i'' + o(1)_{n\to\infty}\]
as $m\to\infty$.   The distortion estimate~\eqref{eq:univwind} implies that the classical winding number of the lift $ \undertilde{r}_i''$ satisfies 
\[ \sphericalangle \undertilde{r}_i''=t^{(n)}_i\sphericalangle r_i''+\mathcal O_n(1)\]
as $m\to\infty$, with a universal implicit constant (at most $\log(3)$); here $t^{(n)}_i$ is the exponent of the leading term in the series expansion of the branch of $F^{(-n)}$ sending $x$ to $y$.   
  By Proposition~\ref{prop:slippage}, the curve $\undertilde{r}_i''$ lies within a $C$-neighborhood of  a subsegment of the geodesic $g_m^{(n)}$, for a constant $C_n$ depending on $n$ but not $m$.  Also, the endpoints of $\undertilde{r}_i''$ lie within the annular region between cusp neighborhoods $B(y, \delta) \setminus \overline{B}(y,\delta'')$.  When $m$ is large, the portion of $r_i^{(n)}\subset g_m^{(n)}$  that lies within the annular cusp region $B(y, \delta) \setminus \overline{B}(y,\delta'')$ consists of two  subsegments that, in the natural coordinates in the disk $\ID$ given by $\beta_y$, are nearly radial; so that again, in the upper-half-plane model, the subsegments are nearly vertical.  It follows  that the winding number $\sphericalangle r_i^{(n)}$, both classically and as defined in~\S\ref{subsecn:windingnumbers}, satisfies 
\[ \sphericalangle r_i^{(n)}=t^{(n)}_i \sphericalangle r_i + \mathcal O_n(1)\]
with an implicit constant depending on $n$ but not $m$. 
Since the winding numbers tend to infinity as $m$ tends to infinity, we conclude upon applying the estimate in Lemma~\ref{lem:lengthest}  that 
\[ |r_i^{(n)}| = |r_i| + 2\log t^{(n)}_i + o(1)_{n\to\infty}.\]

Conclusion (2) is straightforward to verify.

We now prove Conclusion (3). For each $i\in \{1, \ldots, K-1\}$, the geodesic $\ell_i^{(n)}$ lies in a $C$-neighborhood of the curve $\undertilde{r}_{i}'''.\undertilde{\ell}_i.\undertilde{r}_{i+1}'$, which has length at most $\lambda+2\log(\delta/\delta')+o(1)_{n\to\infty}$.  Thus $|\ell_i^{(n)}| < \lambda+2\log(\delta/\delta')+C+o(1)_{n\to\infty}$.  When $i=0$, the curve $\ell_0^{(n)}$ lies in the $C$-neighborhood of $\undertilde{\ell}_0.\undertilde{r}_1'$, so has length at most $\lambda+\log(\delta/\delta')+C+o(1)_{n\to\infty}$.  When $i=K$, the curve $\ell_K^{(n)}$ lies in the $C$-neighborhood of the curve $\undertilde{r}_K'''.\undertilde{\sfx}$, and so has length at most $\log(\delta/\delta')+C+n\xi+o(1)_{n\to\infty}$. 
\end{proof}

\begin{cor}[Modelling lengths of lifts along a subsequence]\label{cor:m_n}
  For every very tight sequence $(g_m)$ there are unbounded sequences $(m(s)),(n(s))$ such that $(g_{m(s)})^{(n(s))}$ is very tight, and such that the roundabouts of $g_{m(s)}^{(n(s))}$ are the lifts of roundabouts of $g_{m(s)}$.

  In particular, $|\ell_{0,m(s)}^{(n(s))}|+\cdots+|\ell_{K,m(s)}^{(n(s))}|$ is bounded, and \eqref{eq:extraction} holds:
  \[|g_{m(s)}^{(n(s))}|=|g_{m(s)}|+2\log\Big(t_{1,m(s)}^{(n(s))}\cdots t_{K,m(s)}^{(n(s))}\Big)+\mathcal O(1).\]
\end{cor}
\begin{proof}
  The first claim follows from Lemma~\ref{lemma:extraction}, upon choosing $m(s)$ sufficiently sparse. Namely, we first select any $N(s)$ and $M(s)$ satisfying the hypotheses of Lemma~\ref{lemma:extraction}, and then a sufficiently large $m(s)$ so that lifts of roundabouts of the $g_{m(s)}$ under $n(s)\in[N(s),M(s)]$ iterations are long roundabouts by Lemma~\ref{lem:tightlift}(1). This guarantees that $g_{m(s)}^{(n(s))}$ still has $K$ roundabouts that are lifts of $g_{m(s)}$'s roundabouts, and therefore no other, new roundabouts. The second claim follows from Lemma~\ref{lemma:ldvt}, and the third one follows from Lemma~\ref{lem:tightlift}.
\end{proof}

%
%

We are still considering a very tight sequence $(g_m)$.  Given a fixed iterate $n$, we now know that eventually the thick-thin decompositions of $g_m$ and $g_m^{(n)}$ approximately match: that is, they are both made of $K+1$ ``short'' (now, however, with an upper bound depending on $n$) paths in the thick part, separated by $K$ very long roundabouts. Moreover, we have bounds relating the length of each thick and thin part of $g_m$ to the length of the corresponding thick and thin part of $g_m^{(n)}$. In the remainder of this section, we show that as the number of iterates $n$ increases, contraction on the thick parts must be offset by expansion on the thin part, yielding bounds on the exponents $t_i^{(n)}$ as $n\to\infty$. 

For $m\gg0$ let $x_{1,m},\dots,x_{K,m}$ denote the cusps that the roundabouts in $g_m$ surround, and for $m\gg n$ let $x_{1,m}^{(n)},\dots,x_{K,m}^{(n)}$ denote the corresponding cusps surrounded by the roundabouts in $g_m^{(n)}$. By our assumptions, there are branches of $F^{(-n)}$ that map $x_{m,i}$ to $x_{m,i}^{(n)}$, and have local behaviour, in natural cusp coordinates, modeled by a Puiseux series $z\mapsto a^{(n)}_{i,m}z^{t^{(n)}_{i,m}}+\ldots$ for some $a^{(n)}_{i,m}\in\C\setminus \{0\}$ and $t^{(n)}_{i,m}\in\Q$.

\begin{lem}\label{lem:tdown}
  For any very tight sequence $(g_m)$, there is $\eta > 0$ and a sequence of exponent bounds $d^{(n)}=\mathcal O(\exp(-\eta n))\to 0$ as $n\to\infty$, such that for any sufficiently large iterate $n\in\N$ and index $m\in \N$, we eventually  have
\begin{equation}
\label{eqn:exps} 
t^{(n)}_{1,m}\le d^{(n)}\text{ and }t^{(n)}_{i,m}t^{(n)}_{i+1,m}\le d^{(n)}\text{ for all }i\in\{1,\dots,K-1\}.
\end{equation}
In particular, 
\begin{equation}\label{eq:lem:tdown}
t^{(n)}_{1,m}t^{(n)}_{2,m}\ \dots\  t^{(n)}_{K,m} \le d^{(n)}\to 0
\end{equation}
as $n\to\infty$.
\end{lem}

\begin{proof} Fix an iterate $n\in \N$.  Let $\delta'<\delta$ be as in~\S\ref{subsecn:change}.  By Setup~\ref{setup}\eqref{setup:deltahat} we may choose $\hat{\delta}\ll \delta'$, so that for each branch of $F^{(-n)}$ sending a cusp $x\in \cusps{\cS}$ to a cusp $y\in \cusps{\cS}$ given in local natural cusp coordinates by $z \mapsto az^t+\cdots$, the image of the cusp neighborhood $B(x, \hat{\delta})$  is approximately the cusp neighborhood $B(y,  \hat{\delta}/t)$, with an additive error in the cusp circumference parameter that tends to zero as $\hat{\delta}\to 0$. In the analysis below, we will obtain estimates in which the dependence on $\hat{\delta}$ cancels out. 

Suppose now that $(g_m)_m$ is a tight sequence, and fix $i\in \{1, \ldots,K-1\}$.

Given a thick part $\ell_{i,m}$ of the geodesic $g_m$ connecting a point $Q_{i,m}\in \partial B(x_{i,m}, \delta)$ with a point $Q_{i+1}\in \partial B(x_{i+1},\delta)$, consider the unique bi-infinite geodesic in $\cS$ that is homotopic to the curve $[x_{i,m}Q_{i,m}].\ell_{i,m}.[Q_{i+1,m}x_{i+1,m}]$, where the subcurves in brackets are segments joining the indicated points that are radial in the natural cusp coordinates.  (Formally, one lifts to the universal cover $\IH$.   The endpoints of the lift to $\IH$ of the curve $[x_{i,m}Q_{i,m}].\ell_{i,m}.[Q_{i+1,m}x_{i+1,m}]$ must join two cusps corresponding to $x_{i,m}$ and $x_{i+1,m}$.  The geodesic we seek is the image of the geodesic in $\IH$ joining these two cusps.) This geodesic is a concatenation of an infinite-length segment in $B(x_{i,m},\hat{\delta})$ that is radial in natural cusp coordinates, a finite-length segment $p$ lying outside the union of each of the $\hat{\delta}$-cusp neighborhoods of $\cS$ (i.e., in the $\hat{\delta}$-thick-part), followed by another radial infinite-length segment in $B(x_{i+1,m},\hat{\delta})$.  The length of $p$ is thus at most $\lambda+2\log(\delta/\hat{\delta})$, since, firstly, the length of the $\delta'$-thick part of $p$ is at most the length of $\ell_{i,m}$ which is bounded by $\lambda$ and, secondly, the distance between the boundaries of the $\delta$-horoball and the $\hat{\delta}$-horoball is $\log(\delta/\hat{\delta})$; here, we have used the fact that $p$ is a subsegment of a geodesic joining two cusps. 
 
We now look at a distinguished lift $\undertilde{p}$ of $p$ under $F^{(-n)}$.  See Figure~\ref{fig:homotopy}.
 
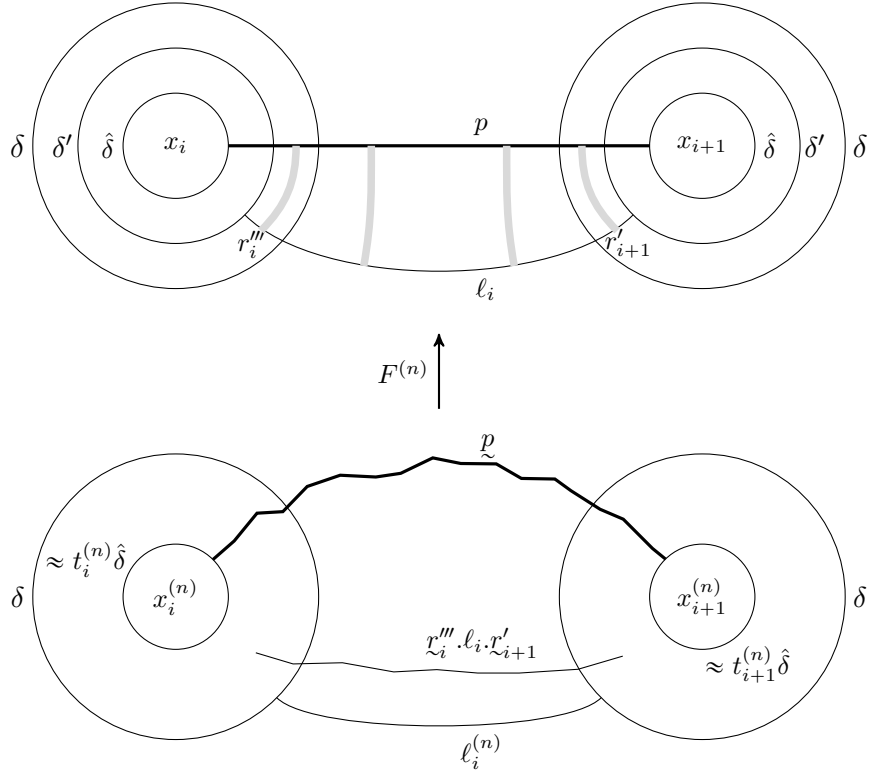
\begin{figure}
\begin{center}
  \begin{tikzpicture}[>=stealth',decoration={markings,mark=at position 0.5 with {\arrow{>}}}]
    \node (I) at (-3.5,0) {$x_i$};
     \draw (I) circle (7mm) circle (13mm) circle (19mm) +(-0.9,0) node {$\hat\delta$} +(-1.5,0) node {\large $\delta'$} +(-2.1,0) node {\Large $\delta$};
     \node (J) at (3.5,0) {$x_{i+1}$};
     \draw (J) circle (7mm) circle (13mm) circle (19mm) +(0.9,0) node {$\hat\delta$} +(1.5,0) node {\large $\delta'$} +(2.1,0) node {\Large $\delta$};
     \draw [very thick,postaction={decorate}] (I.center) +(0.7,0) -- node[pos=0.6,above] {$p$} ($(J.center)+(-0.7,0)$);
     \draw[postaction={decorate}] (I.center) +(-45:1.3) .. controls +(1,-1) and +(-1,-1) .. node [pos=0.03,below] {$r_i'''$} node [pos=0.6,below] {$\ell_i$} node[pos=0.98,below] {$r_{i+1}'$} ($(J.center)+(225:1.3)$);
     \draw[line width=1mm,gray!30] (I.center)+(-43.4:1.65) to[out=58,in=-90] ($(I.center)+(1.6,0)$)
     (-1,-1.58) to[out=80,in=-90] (-0.9,0)
     (1,-1.58) to[out=100,in=-90] (0.9,0)
     (J.center)+(223.4:1.65) to[out=123,in=-90] ($(J.center)+(-1.6,0)$);
     
     \draw[thick,<-] (0,-3.5) -- node [left] {$F^{-n}$} (0,-2.5);
     
     \node (K) at (-3.5,-6) {$x_i^{(n)}$};
     \draw (K) circle (7mm) circle (19mm) +(1.1,-0.5) node[] {$\approx \hat\delta/t_i^{(n)}$} +(-2.1,0) node {\large $\delta$};
     \node (L) at (3.5,-6) {$x_{i+1}^{(n)}$};
     \draw (L) circle (7mm) circle (19mm) +(1.05,-0.6) node[] {$\approx \hat\delta/t_{i+1}^{(n)}$} +(2.1,0) node {\large $\delta$};
     \draw [very thick,decorate,decoration={random steps,segment length=4mm,amplitude=1mm}] (K.center) +(45:0.7) to[out=45,in=135] node[pos=0.6,above] {$\undertilde p$} ($(L.center)+(135:0.7)$);
     \draw [decorate,decoration={random steps,segment length=6mm,amplitude=1mm}] (K.center) +(-35:1.3) .. controls +(1,-0.3) and +(-1,-0.3) .. node[pos=0.6,above] {${\undertilde r}_i'''.\undertilde{\ell}_i.{\undertilde r}'_{i+1}$} ($(L.center)+(215:1.3)$);
     \draw (K.center) +(-45:1.9) .. controls +(0.5,-0.5) and +(-0.5,-0.5) .. node[below,pos=0.6] {$\undertilde{\ell}_i^{(n)}$} ($(L.center)+(225:1.9)$);
 \end{tikzpicture}
 \end{center}
 \caption{Top: dependence on index $m$ suppressed.  The geodesic $p$ joins the cusps $x_i$ and $x_{i+1}$.  The symbols involving $\delta$ are circumferences of cusp neighborhoods.  Some traces of a homotopy between a subsegment of $p$ and $\ell_i$ are indicated in thick gray.  Bottom: the lifts under $F^{-n}$ are indicated with under-tilde's.}
 \label{fig:homotopy}
\end{figure}
 
By construction, $p$ contains a subsegment homotopic to the curve $r_{i}''' . \ell_{i,m}.r_{i+1,m}'$ relative to the union of the $\delta'$-cusp neighborhoods.  Lifting $r_{i}''' . \ell_{i,m}.r_{i+1,m}'$, we get $\undertilde{r}_{i}''' . \undertilde{\ell}_{i,m}.\undertilde{r}_{i+1,m}'$. The curve $\undertilde{r}_{i}''' . \undertilde{\ell}_{i,m}.\undertilde{r}_{i+1,m}'$ joins $B(x_{i}^{(n)},\delta)$ to $B(x_{i+1}^{(n)},\delta)$.   By lifting this homotopy, we obtain a unique lift $\undertilde{p}$ of $p$ that also joins $B(x^{(n)}_{i},\delta)$ to $B(x_{i+1}^{(n)},\delta)$.  From this observation it follows that the endpoints of $p$ are, up to an additive error tending to zero in $\hat{\delta}$, actually in the smaller neighborhoods $B(x^{(n)}_{i},\hat{\delta}/t_{i, n, m})$ and $B(x_{i+1}^{(n)},\hat{\delta}/t^{(n)}_{i+1, m})$. The lift $\undertilde{r}_{i}''' . \undertilde{\ell}_{i,m}.\undertilde{r}_{i+1,m}'$ contains a subcurve homotopic to $\ell_{i,m}^{(n)}$ relative to the $\delta$-cusp neighborhoods, and so the same is true of the lift $\undertilde{p}$.  

We will now estimate the length of $\undertilde{p}$ in two different ways. 

Focusing on the long parts of $\undertilde{p}$ in the thin parts near the cusps, we note that by Lemma~\ref{lem:t}, the first end has a portion joining a horocycle of circumference $\delta$ to one of circumference approximately $\hat{\delta}/t^{(n)}_{i,m}$ and similarly the second end has a portion joining a horocycle of width $\delta$ to one of circumference approximately $\hat{\delta}/t^{(n)}_{i+1,m}$ (these estimates hold up to additive error tending to zero in $\hat{\delta}$). Hence 
  \[|\undertilde{p}|\ge\log(t^{(n)}_{i,m}\delta/\hat{\delta})+\log(t^{(n)}_{i+1,m}\delta/\hat{\delta}).\]
  
Recall that the constant $0<c<1$ denotes an upper bound on the contraction factor of $\rho$ on the thick part of $\cT$ (defined at the start of~\S\ref{secn:proof}), and that the constant $\lambda > 0$ is an upper bound on the length of the thick parts of the $g_m$, from Lemma~\ref{lemma:lambda}.  Focusing now on the thick part of $\undertilde{p}$, we note that $p$ and each of its $F^{(j)}$-lifts, $j=1, \ldots, n-1$, contains a subcurve of length at least $\zeta$ in the thick part of $\cS$, namely one which is homotopic to $\ell_{i,m}^{(j)}$.  Each time we lift by a single iterate $F^{-1}$, the length of such a subcurve is shortened by at least the additive constant $\eta\coloneqq(1-c)\lambda$. Therefore,
  \[|\undertilde{p}|\le|p|-\eta n \le \lambda+2\log(\delta/\hat{\delta})-\eta n.\]
Combining these upper and lower estimates, the dependence on $\hat{\delta}$ vanishes by cancellation, and we conclude $\lambda-\eta n\ge\log(t^{(n)}_{i,m}t^{(n)}_{i+1,m})$ and so we obtain the sharper estimate
  \[t^{(n)}_{i,m}\cdot t^{(n)}_{i+1,m}\le\exp(\lambda-\eta n)\eqqcolon d^{(n)}\to 0\]
  as $n\to\infty$. The same argument gives the estimate $t^{(n)}_{0,m} \leq d^{(n)}\to 0$, by considering the geodesic connecting the basepoint $\star$ to the first cusp $x_{1,m}$.  Looking now at the product of exponents and focusing on their indices $1, \ldots, K$, if $K$ odd, we group the terms as $(1)(2\cdot 3)\cdots ((K-1)\cdot K)$; if $K$ is even, we group them as $(1\cdot 2)\cdots ((K-1)\cdot K)$. We conclude the product of exponents satisfies $t^{(n)}_{1,m}\cdots t^{(n)}_{K,m} < d^{(n)}\to 0$ exponentially fast in $n \gg 1$.
\end{proof}

\subsection{Conclusion of proof of Theorem~\ref{thm:main}}
\label{subsecn:conclusion}

\begin{proof}[Proof of Theorem~\ref{thm:main}]
Consider a very tight sequence $(g_m)$, a corresponding sequence  $(g_m^{(n)})$ of $\sfX$-rays, and for each term $g_m^{(n)}$ its $m$-thick-thin decomposition.  Out of this doubly-indexed sequence $(g_m^{(n)}), m, n\in \N$, we will extract another very tight sequence $h_s\coloneqq g_{m(s)}^{(n(s))}$ in which the lengths of thick parts become unbounded.  This will contradict Lemma~\ref{lemma:ldvt}, completing the proof.

From the asymptotics as $m \to \infty$ for the lengths of $g_m$ and $g_m^{(n)}$, we obtain
\begin{align*}
  |g_m^{(n)}| &= |r_{1,m}^{(n)}|+\cdots+|r_{K,m}^{(n)}| + |\ell_{0,m}^{(n)}|+\cdots+|\ell_{K,m}^{(n)}|, \text{by definition of decomposition}\\
        &= \begin{aligned}[t]
             |r_{1,m}|+\cdots+|r_{K,m}|&+2\log(t^{(n)}_{1,m}\cdot t^{(n)}_{2,m}\cdots t^{(n)}_{K,m})\\
             &+ |\ell_{0,m}^{(n)}|+\cdots+|\ell_{K,m}^{(n)}| + \mathcal O(1)\text{ by~Lemma~\ref{lem:tightlift}(1)}
           \end{aligned}\\
              &= |r_{1,m}|+\cdots+|r_{K,m}|-2\eta n+|\ell_{0,m}^{(n)}|+\cdots+|\ell_{K,m}^{(n)}| + \mathcal O(1)\text{ by Lemma~\ref{lem:tdown}\eqref{eq:lem:tdown}}\\
              &= |g_m|-2\eta n+|\ell_{0,m}^{(n)}|+\cdots+|\ell_{K,m}^{(n)}| + \mathcal O(1)\text{ by Proposition~\ref{prop:tightfacts}}.
\end{align*}
Write $L_m^{(n)}=|\ell_{0,m}^{(n)}|+\cdots+|\ell_{K,m}^{(n)}|$. Since $(g_m)$ is tight, $|g_m^{(n)}|\ge|g_m|-\mathcal O(1)$ for all $n\le m$, so $L_m^{(n)}-2\eta n$ is bounded. On the other hand, Corollary~\ref{cor:m_n} implies the existence of subsequences $(m(s)),(n(s))$ such that $L_{m(s)}^{(n(s))}$ is bounded. This contradiction concludes the proof.
\end{proof}

\section{Modeling graph pullbacks via $\sfX$-rays}\label{ss:gga}
We consider in this section a Thurston map $f\colon(S^2,P)\to (S^2, P)$, which we assume is \emph{nonexceptional}: it has degree $\ge2$, and $(S^2,P)$ is not double-covered by a torus on which $f$ lifts to an affine map.  We denote by $G\coloneqq\Mod(S^2,P)$ the pure mapping class group and by $B\coloneqq B^{\text{maps}}$ its Hurwitz biset of Thurston maps; recall from Proposition~\ref{prop:identify} that it is the biset of the correspondence on moduli space associated with $f$.

\subsection{Planar graphs} 
\begin{defn}\label{def:essentialgraph}
  A \emph{planar graph} is a finite graph $\Gamma=(V,E)$ embedded in $S^2$, with $V$ nonempty, such that $\Gamma \cap P \subset V$. Its underlying subset of $S^2$ is denoted $|\Gamma|$.  Its \emph{complexity} $\#\Gamma$ is defined as its total number of vertices and edges: $\#\Gamma\coloneqq\#V+\#E$.   Two planar graphs are \emph{equivalent} if there is an isotopy of $S^2$ fixing $P$ sending one to the other and inducing a bijection between their vertices and between their edges. 
\end{defn}
The definition of equivalence does not allow for any of the following: a collapse of inessential loops, coalescence of two isotopic components into a single component, or addition or deletion of vertices of valence two. We put no other restrictions on planar graphs.  

Here is an example. Suppose $C$ is a simple closed curve in $S^2\setminus P$.  Choose any point of $C$, and decree it to be a vertex.  This gives a planar graph $\Gamma$ with one vertex and one edge, which is a loop. 

We denote by $\graphs(P,M)$ the set of equivalence classes of graphs, of complexity at most $M$. Note that this is typically an infinite set. 

\begin{prop}\label{prop:gorb}
For each $M\in \N$, the group $G$ acts on the set $\graphs(P,M)$  with finitely many orbits. 
\end{prop}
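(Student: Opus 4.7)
The plan is to parametrize the elements of $\graphs(P,M)$ modulo the $G$-action by a finite amount of combinatorial data, and then verify that this data is a complete invariant. To each planar graph $\Gamma=(V,E)$ of complexity at most $M$, I would associate (i) the isomorphism class of the underlying abstract graph $(V,E)$; (ii) the partial injection $\mu_V\colon P \hookrightarrow V$ telling which vertices are points of $P$; (iii) the rotation system induced by the orientation of $S^2$, namely, the cyclic ordering of half-edges around each vertex; (iv) the nesting data recording, for each connected component of $\Gamma$, which face of the other components contains it; and (v) the face assignment $\mu_F\colon P\setminus V \to \pi_0(S^2\setminus|\Gamma|)$ telling which complementary region contains each postcritical point not on $\Gamma$. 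Since $\#\Gamma\le M$ uniformly bounds the cardinalities of $V$, $E$, the face set, and the degree of each vertex, each of (i)--(v) ranges over a finite set, so only finitely many combinatorial types arise.

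Next I would show that two planar graphs $\Gamma_1,\Gamma_2$ of complexity at most $M$ with the same combinatorial data lie in a common $G$-orbit. The classical classification of graph embeddings on oriented surfaces produces an orientation-preserving homeomorphism $h\colon S^2\to S^2$ with $h(\Gamma_1)=\Gamma_2$ realizing the given abstract graph, rotation system, and nesting; such an $h$ is built up cell-by-cell, matching vertices, then edges, then face interiors. Using $\mu_V$ one can arrange $h$ to send every $p$-labeled vertex of $\Gamma_1$ to the $p$-labeled vertex of $\Gamma_2$ (which is the same point of $S^2$, namely $p$), and a further isotopy localized in small disks around $P\cap V$ makes $h$ equal to the identity on $P\cap V$.

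Finally, by $\mu_F$, the homeomorphism $h$ carries, for each $p\in P\setminus V$, the face of $\Gamma_1$ containing $p$ to the face of $\Gamma_2$ containing the same point $p$. Each face is a compact connected surface with nonempty boundary, and the group of orientation-preserving homeomorphisms of such a surface fixing the boundary pointwise acts transitively on ordered tuples of distinct interior points (by moving points one at a time along disjoint arcs). An isotopy of $h$ supported inside each face therefore makes $h$ fix $P$ pointwise, without disturbing $\Gamma_2 = h(\Gamma_1)$. The resulting orientation-preserving homeomorphism $h$ represents an element of $G=\text{PMod}(S^2,P)$ carrying $\Gamma_1$ to $\Gamma_2$, as desired.

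The main obstacle I anticipate is a careful formulation of the nesting data in item (iv) for disconnected graphs. For a connected planar graph, the rotation system alone determines the embedding into $S^2$ up to orientation-preserving homeomorphism, but for disconnected $\Gamma$ one must additionally track how each component sits inside the face structure of the others — equivalently, a rooted forest whose nodes are the components of $\Gamma$ and whose edges record component-into-face containment. This remains a finite piece of data and extends the classical rotation-system invariant cleanly. Once this bookkeeping is in place, all remaining steps are standard applications of surface topology.
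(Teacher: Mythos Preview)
Your argument is correct and complete in outline; the combinatorial invariants you list (abstract graph, $P$-labeling of vertices, rotation system, nesting of components, face assignment for the remaining points of $P$) do determine a planar graph up to orientation-preserving homeomorphism fixing $P$, and since each invariant takes only finitely many values under the complexity bound, finiteness of the orbit set follows. The one place that demands care, as you note, is the nesting data for disconnected graphs, but your rooted-forest description handles it.

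The paper, however, takes a different route: it argues by induction on the complexity $\#\Gamma$, reducing to the case of adding a single vertex or a single edge to a graph of lower complexity, and invokes the Alexander Method from \cite{farb:margalit:mcg} to control the finitely many ways an additional edge can be inserted between two existing vertices, up to the $G$-action. This approach is quicker to write down because it offloads the geometric work to a standard reference, whereas your argument is more self-contained and constructive---you actually build the homeomorphism witnessing the $G$-equivalence. Your method also makes the finite orbit transversal more explicit (it is parametrized by the combinatorial types), which could be useful for computation; the inductive approach gives less direct access to the transversal but fits more smoothly into the mapping-class-group toolkit used elsewhere in the paper.
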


\begin{proof} We use induction on the complexity.  The conclusion is clear for the base case of a graph consisting of a single vertex. The inductive step breaks down into cases according to whether we add a vertex or an edge.  The only case of substance is that of adding an edge between two existing vertices. A standard application of the Alexander method~\cite{farb:margalit:mcg}*{\S2.3} yields the result.
\end{proof}

\begin{defn}\label{subgraphs}
A \emph{subgraph} of a graph $\Gamma=(V,E)$ is a graph $\Gamma' = (V', E')$ for which $|\Gamma'| \subset |\Gamma|$.  We denote this by $\Gamma' \subset \Gamma$.
\end{defn}

The full $f$-preimage $f^{-1}(\Gamma)$ of a planar graph $\Gamma$ is again a planar graph, and  its complexity satisfies $\#f^{-1}(\Gamma) \leq \deg(f)\cdot \#\Gamma$. The isotopy class of $f^{-1}(\Gamma)$ depends only on that of $\Gamma$. A \emph{sublift} of the planar graph $f^{-1}(\Gamma)$ is a subgraph $\Delta \subset f^{-1}(\Gamma)$. If $\Delta$ is any sublift of $\Gamma$ then its complexity satisfies $\#\Delta \leq \deg(f) \cdot \#\Gamma$. 

\subsection{Bounded orbits under pullback}
Fix an integer $M \geq 1$.  An \emph{$M$-bounded $f$-orbit of graphs} is a sequence $(\Gamma_0,\Gamma_1,\dots)$ of planar graphs such that for all $n\in\N$, $\Gamma_n\in \graphs(P,M)$, and $\Gamma_{n+1}$ is isotopic to a subgraph of $f^{-1}(\Gamma_n)$. In our terminology, $\Gamma_{n+1}$ is a \emph{sublift} of $\Gamma_n$.

For example, if $C_0, C_1,\ldots $ is a sequence of iterated preimages of the curve $C_0$ under pullback by $f$, then by decorating each $C_i$ with a single vertex, we obtain an associated $2$-bounded-orbit of graphs $\Gamma_0, \Gamma_1, \ldots$.

We next relate $M$-bounded $f$-orbits of graphs to $\sfX$-rays in the direction of $f$. 

Let $F=\phi, \rho\colon \cT \rightrightarrows\cS$ be the covering on moduli space determined by $f$; we denote also by $f$ the path in moduli space determined by $f$; it joins the basepoint $\star=[P \hookrightarrow \Pone]$ to the element of $F^{-1}(\star)$ determined by $f$.

\begin{prop}\label{prop:getxray}
Given a complexity bound $M$, let $\graphs(P,M)_0 \subset \graphs(P,M)$ be a finite set of graphs comprising a right orbit transversal for the action of $G$ on $\graphs(P,M)$.  Then given a Thurston map $f$, there is a finite set $\sfX$ of Thurston maps with the following property.

For any $M$-bounded $f$-orbit of graphs $\Gamma_0, \Gamma_1, \ldots$, there exists $g=g^{(0)}\in G$ with $\Gamma^{(0)}\coloneqq g(\Gamma_0)\in \graphs(P,M)/G$, and an $\sfX$-ray $(g^{(n)})$ in the direction of $f$ above $g$, such that for each $n\in \N$, we have $\Gamma^{(n)}\coloneqq g^{(n)}(\Gamma_n)\in \graphs(P,M)/G$.
\end{prop}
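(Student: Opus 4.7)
The plan is to enumerate the finitely many combinatorial sublift configurations arising from preimages $f^{-1}(\Gamma^{\star})$, encode each by a biset element, and then show that the resulting finite set $\sfX$ supports an inductive $\sfX$-ray construction over any $M$-bounded pullback orbit.  Denote the finite transversal $\graphs(P,M)/G$ by $\cM$ throughout, and fix a representative Thurston map $f$ in its isotopy class rel $P$.

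\smallskip
\noindent\textbf{Construction of $\sfX$.}  For each $\Gamma^{\star}\in\cM$, the preimage $f^{-1}(\Gamma^{\star})$ is a planar graph of complexity at most $\deg(f)\cdot M$, so it admits only finitely many subgraphs $\Delta$ with $\#\Delta\le M$ up to isotopy rel $P$.  For each such $\Delta$ choose, once and for all, an element $k_{\Gamma^{\star},\Delta}\in G$ with $k_{\Gamma^{\star},\Delta}(\Delta)\in\cM$, and define $\sfx_{\Gamma^{\star},\Delta}\in B$ to be the biset class of the Thurston map $f\circ k_{\Gamma^{\star},\Delta}^{-1}$; under Proposition~\ref{prop:identify}, this corresponds to $k_{\Gamma^{\star},\Delta}^{-1}\cdot f\in B^{\mathrm{paths}}$.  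Let $\sfX_{0}$ be the resulting finite collection, and set $\sfX\coloneqq\sfX_{0}\cup\sfX_{1}$, where $\sfX_{1}$ is a finite supplement providing representatives for any points of $F^{-1}(*)$ not already hit by $\sfX_{0}$.  Then $\sfX$ is finite and satisfies the covering hypothesis of Definition~\ref{def:xray}.

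\smallskip
\noindent\textbf{Inductive construction of the $\sfX$-ray.}  Given an $M$-bounded $f$-orbit $(\Gamma_{n})$, begin with any $g^{(0)}\in G$ with $\Gamma^{(0)}\coloneqq g^{(0)}(\Gamma_{0})\in\cM$.  Suppose $g^{(n)}$ has been chosen with $\Gamma^{(n)}=g^{(n)}(\Gamma_{n})\in\cM$.  The sublift relation $\Gamma_{n+1}\subset f^{-1}(\Gamma_{n})$, transported through $g^{(n)}$ and reinterpreted via the biset element $b_{n}\coloneqq f\cdot g^{(n)}\in B$, says that any decomposition $b_{n}=g\cdot\sfx$ with $g\in G,\ \sfx\in B$ yields $g(\Gamma_{n+1})\subset\sfx^{-1}(\Gamma^{(n)})$, where the preimage is taken with respect to any Thurston map representative of $\sfx$.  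Read off the isotopy class of this candidate image as some enumerated $\Delta_{n}\subset f^{-1}(\Gamma^{(n)})$, pick $\sfx_{n+1}\coloneqq\sfx_{\Gamma^{(n)},\Delta_{n}}\in\sfX_{0}$, and let $g^{(n+1)}\in G$ be the unique solution of $b_{n}=g^{(n+1)}\cdot\sfx_{n+1}$ provided by freeness of the left $G$-action on $B$.  By construction $g^{(n+1)}(\Gamma_{n+1})=k_{\Gamma^{(n)},\Delta_{n}}(\Delta_{n})\in\cM$, closing the induction.

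\smallskip
\noindent\textbf{Main difficulty.}  The delicate point is the consistency check in the inductive step: that the subgraph $\Delta_{n}$ read off from the biset decomposition $b_{n}=g^{(n+1)}\cdot\sfx_{n+1}$ coincides, up to isotopy rel $P$, with the enumerated $\Delta$ used to define $\sfx_{n+1}$ in the setup.  This rests on Proposition~\ref{prop:identify}: the identification $B^{\mathrm{maps}}=B^{\mathrm{paths}}$ intertwines the combinatorial operation of taking a sublift of a graph with the algebraic operation of decomposing a biset element, so the preselected normalizers $k_{\Gamma^{\star},\Delta}$ absorb precisely the stabilizer ambiguity inherent in the biset equation.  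Once this bookkeeping is carried out, the finiteness of $\sfX$ is immediate from the finiteness of the subgraph enumeration, and its covering hypothesis holds by construction of $\sfX_{1}$.
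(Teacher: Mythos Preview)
There is a genuine gap in your construction of $\sfX_0$. Every element you put into $\sfX_0$ has the form $k_{\Gamma^\star,\Delta}^{-1}\cdot f$, and since the left $G$-action on $B$ is free, all of these lie in the \emph{single} left $G$-orbit of $f$. But the biset equation $b_n=f\cdot g^{(n)}=g^{(n+1)}\cdot\sfx_{n+1}$ forces $\sfx_{n+1}$ to lie in the left orbit of $b_n$, and this orbit is determined by the right coset $H_f g^{(n)}$, where $H_f=\{h\in G:\exists\,\tilde h\in G,\ h\circ f\simeq f\circ\tilde h\}$ is the finite-index subgroup of liftable elements. Whenever $g^{(n)}\notin H_f$, the element $b_n$ is not in the left orbit of $f$, so no $\sfx_{n+1}\in\sfX_0$ can satisfy the equation. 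Your supplement $\sfX_1$ does supply representatives for the remaining orbits, but those elements carry no relation to the graph normalizers $k_{\Gamma^\star,\Delta}$, so the conclusion $g^{(n+1)}(\Gamma_{n+1})\in\cM$ no longer follows. Concretely, your sentence ``Read off the isotopy class of this candidate image as some enumerated $\Delta_n\subset f^{-1}(\Gamma^{(n)})$'' fails: for a general $\sfx$ in the decomposition, $\sfx^{-1}(\Gamma^{(n)})$ is not isotopic rel $P$ to $f^{-1}(\Gamma^{(n)})$, so $g(\Gamma_{n+1})$ need not match any of your enumerated subgraphs.

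The paper repairs exactly this point by introducing a right transversal $T$ to $H_f$ in $G$ and enumerating, for each pair $(t,\Gamma^\star)\in T\times\cM$, the sublifts $\Delta\subset f^{-1}(t^{-1}\Gamma^\star)$ of complexity at most $M$. The resulting set $\sfX=\{\,s^{-1}\cdot f\cdot t:t\in T,\ s\in S\,\}$ meets every left $G$-orbit of $B$ (via the factor $t$), and in the inductive step one first factors $g^{(n)}=t_n\circ h_n$ with $h_n\in H_f$, $t_n\in T$, then uses the lift $\tilde h_n$ to produce $\Delta=\tilde h_n(\Gamma_{n+1})\subset f^{-1}(t_n^{-1}\Gamma^{(n)})$, which \emph{is} one of the enumerated graphs. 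Your argument becomes correct once you enlarge the enumeration in this way; the appeal to Proposition~\ref{prop:identify} in your ``main difficulty'' paragraph does not by itself supply the missing transversal.
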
    

\begin{proof}
  Recall from~\eqref{eq:H_f} the subgroup $H_f\le G$ of ``liftable elements'' under $f$, namely with the property that for every $h\in H_f$ there exists a $g\in G$ with $g \circ f=f \circ h$. Furthermore, this $g$ is unique, and is denoted by $\widetilde h$. Note that in the biset $B$ this equation becomes $f \cdot g = \widetilde{h} \cdot f$.  Choose a right transversal $T$ to $H_f$ in $G$.
  
By Proposition~\ref{prop:gorb}, there exists a finite orbit transversal $\graphs(P,M)/G$ to the action of $G$ on graphs of complexity at most $M$.  For each pair $(t, \Gamma)\in T \times \graphs(P,M)/G$, consider the graph $t^{-1}\Gamma$.  Suppose $\Delta$ is a sublift of $t^{-1}\Gamma$ under $f$ whose complexity satisfies $\#\Delta \leq M$. Then there exists $s=s(t, \Gamma, \Delta)\in G$ such that $s\Delta\in \graphs(P,M)/G$.  We fix a choice of such an $s$ for each triple $(t, \Gamma, \Delta)$. The set of triples $(t, \Gamma, \Delta)$ is finite, so the set of such $s(t,\Gamma,\Delta)$ is finite as well; we denote it by $S$. We set 
\[ \sfX\coloneqq\{s^{-1}\cdot f \cdot t | t\in T, s\in S\} \subseteq B\]
and we denote its elements as usual by symbols $\sfx$. 
  
 Consider now an $M$-bounded $f$-orbit of graphs $\Gamma_0, \Gamma_1, \ldots$.  We will construct an $\sfX$-ray of loops from this data. Using Proposition~\ref{prop:gorb} we first choose $g^{(0)}\in G$ with $\Gamma^{(0)}\coloneqq g^{(0)}\Gamma_0\in \graphs(P,M)/G$.  Once this choice is made, we will show that the rest of the sequence $g^{(n)}, n \geq 1$ is determined by the sequence of graphs $\Gamma_1, \Gamma_2, \ldots$ comprising the $M$-bounded $f$-orbit of graphs that we are given and the choices of $T$ and $S$.   
  
 For the base case, we factor $g^{(0)}=t_0\circ h_0$ where $h_0\in H_f$ is liftable under $f$ and $t_0\in T$.  We then lift inductively by iterates of $f$. At the $n$th stage, we  have the following commutative diagram:

  \[\begin{tikzcd}
(S^2,\Gamma_{n+1}) \arrow[d,"f"] \arrow[r,"\tilde{h}_n" below] \arrow[rr,bend left=15,"g^{(n+1)}"] &(S^2,\Delta) \arrow[d,"f"] \arrow[r,"s_n" below] & (S^2,\Gamma^{(n+1)}) \arrow[d,"\sfx_n"] \\
(S^2,\Gamma_n) \arrow[r,"h_n"] \arrow[rr,bend right=15,"g^{(n)}" below] &(S^2,t_n^{-1}\Gamma^{(n)}) \arrow[r,"t_n"] & (S^2,\Gamma^{(n)}).
\end{tikzcd}
\]
In the above diagram, we assume $g^{(n)}$ is given, with the property that $\Gamma^{(n)}\coloneqq g^{(n)}\Gamma_n$ belongs to $\graphs(P,M)/G$.  We factor  $g^{(n)}=t_n\circ h_n$ uniquely, where $h_n\in H_r$ is liftable and $t_n\in T$.  Thus the left square commutes, where $\tilde{h}_n\in G$ is the lift of $h_n$ under $f$.  By assumption, $\#\Gamma_{n+1}\leq M$ and so $\tilde{h}_n\Gamma_{n+1}$ is one of the graphs $\Delta$ arising in the definition of the set $S$.  Thus there exists a unique $s_n\in S$ with $s_n\Delta\in \graphs(P,M)/G$.  By definition, the composition $s_n^{-1}\cdot f \cdot t_n$ defines an element of $\sfX$ which we denote by $\sfx_n$.  Thus the the right-hand square commutes by definition. Finally, we put $g^{(n+1)}\coloneqq s_n\circ \tilde{h}_n$.  Then by construction $f\cdot g^{(n)}=g^{(n+1)}\cdot \sfx_n$ as elements of $B$. Inductively, this holds for each $n$, so the sequence $g^{(0)}, g^{(1)}, \ldots$ is an $\sfX$-ray above $g^{(0)}$.  
\end{proof}

\subsection{Finite graph attractor}
We are now ready to state the main result of this section. 

\begin{thm}[Graph attractors] \label{thm:gga}
Suppose $\#P=4$ and $f\colon (\Pone,P)\to (\Pone, P)$ is a nonexceptional post-critically finite rational map whose postcritical set lies in $P$.  Then for every $M\in\N$ there exists a finite collection $\graphs(P,M,f)\subset \graphs(P,M)$ of planar graphs, each of complexity at most $M$, closed under sublifting by $f$,   such that, for every $M$-bounded orbit $(\Gamma_1, \Gamma_2, \ldots)$ of graphs, we have $\Gamma_n\in\graphs(P,M,f)$ for all $n$ large enough.

In particular, $\graphs(P,M,f)$ naturally forms the stateset of a finite automaton, with states given by $G\coloneqq\Mod(\Pone, P)$-orbits, and transitions given by taking all possible sublifts.
\end{thm}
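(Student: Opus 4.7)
The plan is to reduce Theorem~\ref{thm:gga} to Theorem~\ref{thm:main X ray} (and its Corollary~\ref{cor:contraction}) by converting orbits of graphs into $\sfX$-rays via Proposition~\ref{prop:getxray}. Since $\#P=4$ and $f$ is non-Latt\`es, the induced correspondence $F=\phi\circ\rho^{-1}$ on the moduli space $\cS=\rs\setminus\{0,1,\infty\}$ is admissible, and it has a fixed point $*$ coming from the rational structure of $f$. Proposition~\ref{prop:getxray} then furnishes a finite set $\sfX$ of paths so that every $M$-bounded orbit of graphs $(\Gamma_n)$ lifts to an $\sfX$-ray $(g^{(n)})$ in $G:=\pi_1(\cS,*)$ in the direction of $*$, with $\Gamma^{(n)}:=g^{(n)}(\Gamma_n)\in\graphs(P,M)/G$ for every $n$.

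Next I would extract a finite \emph{forward-invariant} attractor $B\subset G$. Let $A_0:=\{g\in G:|g|\le \kappa+N\xi\}$, which is finite by properness of the hyperbolic norm (constants as in Theorem~\ref{thm:main} and Corollary~\ref{cor:contraction}). Define $B$ to be the union of $A_0$ with the forward orbit of $A_0$ under $\sfX$-ray lifting in the direction of $*$. Applying~\eqref{eq:1cor:contraction} to any $g\in A_0$ yields $|g^{(n)}|\le\max\{|g|,\kappa\}+N\xi\le\kappa+2N\xi$, so $B\subset\{g\in G:|g|\le\kappa+2N\xi\}$ is finite; $B$ is forward-invariant under $\sfX$-ray lifting by construction, and~\eqref{eq:2:cor:contraction} ensures every $\sfX$-ray eventually lies in $B$.

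With $B$ in hand, I propose the definition
\[
\graphs(P,M,f) := \{\,h^{-1}(\Gamma^*) : h\in B,\ \Gamma^*\in\graphs(P,M)/G\,\}.
\]
This is a finite collection of planar graphs of complexity at most $M$. For any $M$-bounded orbit $(\Gamma_n)$, the associated $\sfX$-ray $(g^{(n)})$ produced by Proposition~\ref{prop:getxray} satisfies $g^{(n)}\in B$ for all $n$ sufficiently large, and then $\Gamma_n=(g^{(n)})^{-1}(\Gamma^{(n)})\in\graphs(P,M,f)$, which gives the attractor conclusion.

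The delicate point, and the main obstacle, is verifying closure under sublifting, since the $\sfX$-ray produced by Proposition~\ref{prop:getxray} depends on an initial choice and one must be careful that this choice can always be steered into the attractor $B$. Given $\Gamma=h^{-1}(\Gamma^*)\in\graphs(P,M,f)$ with $h\in B$ and any sublift $\Gamma'$ of $\Gamma$ with $\#\Gamma'\le M$, I would apply Proposition~\ref{prop:getxray} to the extended $M$-bounded orbit $\Gamma_0=\Gamma,\Gamma_1=\Gamma',\ldots$, using the \emph{particular} initial choice $g^{(0)}:=h$ (permissible since $h(\Gamma)=\Gamma^*\in\graphs(P,M)/G$). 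Forward-invariance of $B$ then forces $g^{(1)}\in B$, and by construction $\Gamma'=\Gamma_1=(g^{(1)})^{-1}(g^{(1)}\Gamma')\in\graphs(P,M,f)$. The final automaton assertion is then immediate from the definition of sublifting together with finiteness of the $G$-orbit set $\graphs(P,M,f)/G$.
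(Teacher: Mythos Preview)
Your proposal is correct and follows essentially the same route as the paper: reduce $M$-bounded graph orbits to $\sfX$-rays via Proposition~\ref{prop:getxray}, then invoke Corollary~\ref{cor:contraction} to get a finite attractor $A\subset G$, and set $\graphs(P,M,f):=A^{-1}\cdot\graphs(P,M)/G$.

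The one substantive difference is that the paper stops here and does not explicitly verify the clause ``closed under sublifting by $f$'' in the theorem statement, whereas you do. Your observation is well taken: the set $A_0=\{g:|g|\le\kappa+N\xi\}$ from Corollary~\ref{cor:contraction} is a global attractor but need not be forward-invariant under the $\sfX$-ray operator (one only has $|g^{(1)}|\le|g|+\xi$). Your fix---replacing $A_0$ by its forward saturation $B$, which remains finite because~\eqref{eq:1cor:contraction} bounds it inside $\{|g|\le\kappa+2N\xi\}$---is exactly what is needed, and your verification of closure under sublifting (choosing $g^{(0)}=h\in B$ in the construction of Proposition~\ref{prop:getxray}, then using that the resulting $g^{(1)}$ is an $\sfX$-ray lift of $h$, hence lies in $B$) is correct. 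The paper's proof leaves this step implicit; your version makes it rigorous.
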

\begin{proof}
We denote by $F=\phi, \rho\colon \cT \rightrightarrows\cS$ the correspondence on moduli space associated to $f$ and by $\star$ the fixed point corresponding to $f$.  
 By Proposition~\ref{prop:getxray}, there is a finite set $\graphs(P,M)/G$ and a collection of curves (Thurston maps) $\sfX$ such that for any $M$-bounded $f$-orbit of graphs $\Gamma_0, \Gamma_1, \ldots $ there is an $\sfX$-ray $(g^{(n)})$ with $g^{(n)}(\Gamma_n)\in \graphs(P,M)/G$ for each $n$.  
 
 By Corollary~\ref{cor:contraction} , there is a finite set $A \subset G$ independent of $g$ and of $\Gamma_0$ such that $g^{(n)}\in A$ for all $n$ sufficiently large. We put $\graphs(P,M,f)\coloneqq A^{-1}\graphs(P,M)/G$.  The definitions give $\Gamma_n\in \graphs(P,M,f)$ for all $n$ sufficiently large. 
\end{proof}

\subsection{Proof of Corollary~\ref{cor:attractor}}\label{ss:appinvgr}
We recall the statement:
\setcounter{mainthm}{1}
\begin{maincor}
  Assume that $\#P=4$ and $f$ is a non-Latt\`es rational map. Then there is a finite attractor for the pullback iteration on multicurves, on trees, and on spines.
\end{maincor}

For general $\#P$, we will show that in each of the settings of multicurves, trees, and spines, we can associate a $G$-invariant collection of graphs and a notion of pullback such that the graph complexity remains bounded under iteration.  Upon restricting to the case $\#P=4$, the Corollary then follows from Theorem~\ref{thm:gga}.

\begin{proof}[Proof for multicurves]
By adding a vertex to each component, the set of isotopy classes of multicurves in $S^2-P$ embeds into the family $\graphs(P,M)$ where $M=2(\#P-3)$. The set of multicurves is $G$-invariant; with finitely many orbits.  We define pullback of such graphs as follows. Given a multicurve $\Gamma$, we consider $f^{-1}(\Gamma)$, and modify it  as follows: (i) delete all inessential and peripheral elements; (ii) consolidate essential components of the preimage which are isotopic in $S^2-P$ into a single component; (iii) lift the graph structure from $\Gamma$; and finally (iv) delete all but one vertex.  Defined in this way, the terms of pullback orbits have complexity at most $M$. 
\end{proof}

\begin{proof}[Proof for spines]
  A \emph{spine} for $S^2-P$ is a finite 3-valent graph in $S^2-P$ which, as a subset of the sphere, arises as the image of a deformation retract of $S^2-P$; thus each complementary region in $S^2$ is topologically a disk containing exactly one element of a $P$. From Euler's Formula and being 3-valent, it is easy to see that the complexity of a spine is $M\coloneqq5\#P-10$.  The set of spines is $G$-invariant; thus by Proposition~\ref{prop:gorb}, there are finitely many $G$-orbits of spines.  

\begin{lem}\label{lemma:eraseone}
    The following properties of spines hold:
\begin{enumerate}
\item Suppose $Q \subset S^2$ is finite, with $\#Q \geq 3$, and $q\in Q$. Then any spine $\Gamma$ for $S^2-Q$ contains a spine for $S^2-(Q - \{q\})$. 
\item Suppose $P \subset Q$ are finite, with $\#P \geq 2$. Then any spine $\Gamma$ for $S^2-Q$ contains a spine for $S^2-P$.  
\item If $f\colon (S^2,P)\to (S^2,P)$ is a Thurston map and $\Gamma$ is a spine for $S^2-P$, then $f^{-1}(\Gamma)$ contains a spine for $S^2-P$. 
\end{enumerate}
\end{lem}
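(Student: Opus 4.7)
My plan is to prove the three parts in sequence: part (1) describes the atomic operation of erasing a single cusp, part (2) iterates (1), and part (3) reduces to (2) by first exhibiting $f^{-1}(\Gamma)$ as a spine for the expanded marked set $f^{-1}(P)$.

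For (1), I would let $D_q$ be the complementary disk of $\Gamma$ containing $q$, pick an edge $e$ on $\partial D_q$, and delete $e$. The edge $e$ separates $D_q$ from a second face $D_p$ with marked point $p\in Q\setminus\{q\}$; removing the open edge $e^{\circ}$ merges $D_q$ and $D_p$ into a topological disk $D^{*}$, which, viewed with marked set $Q\setminus\{q\}$, contains only $p$. The two endpoints of $e$ drop from valence $3$ to valence $2$, so I would smooth them by amalgamating incident edges; the result is a $3$-valent graph $\Sigma$ whose underlying subset of $S^2$ is $|\Gamma|\setminus e^{\circ}$, so $\Sigma\subset\Gamma$ in the sense of Definition~\ref{subgraphs}. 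Each new face of $\Sigma$ punctured at its unique marked point is an annulus that deformation retracts to its outer boundary, and these retractions assemble into a deformation retraction of $S^2-(Q\setminus\{q\})$ onto $\Sigma$; hence $\Sigma$ is a spine. Degenerate cases (such as when $e$ is a loop, or smoothing creates a valence-$1$ vertex) are handled by propagating the cleanup one or two further steps, which terminates by finiteness of $\Gamma$. Part (2) then follows immediately by enumerating $Q\setminus P=\{q_1,\dots,q_k\}$ and applying (1) once for each $q_i$.

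For (3), the stronger claim I would prove is that $f^{-1}(\Gamma)$ is itself a spine for $S^2-f^{-1}(P)$; once this is established, (2) applied to $P\subseteq f^{-1}(P)$ produces a spine for $S^2-P$ inside $f^{-1}(\Gamma)$. Since the postcritical set of $f$ lies in $P$ and $\Gamma\subset S^2-P$, no critical value of $f$ lies on $\Gamma$, so $f$ restricts to an unbranched covering $S^2-f^{-1}(P)\to S^2-P$. Three consequences follow: (i) $f^{-1}(\Gamma)\to\Gamma$ is a finite covering of graphs, so $f^{-1}(\Gamma)$ is finite and $3$-valent; (ii) the deformation retraction of $S^2-P$ onto $\Gamma$ lifts via the unbranched cover to a deformation retraction of $S^2-f^{-1}(P)$ onto $f^{-1}(\Gamma)$; and (iii) for each face $D$ of $\Gamma$ with marked point $p$, every component $\tilde D$ of $f^{-1}(D)$ is a proper branched cover of $D$ with branching concentrated above $p$ alone, because $D\cap P=\{p\}$ and the critical values of $f$ all lie in $P$. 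Such a disk-to-disk cover is conjugate to $z\mapsto z^k$ for some $k$, so $\tilde D$ is itself a disk containing exactly one preimage of $p$. Because every other element of $P$ lies outside $D$, its preimages lie outside $\tilde D$, so $\tilde D$ contains exactly one element of $f^{-1}(P)$; thus $f^{-1}(\Gamma)$ satisfies all the spine properties.

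The main obstacle is step (iii) above: verifying that each face of $f^{-1}(\Gamma)$ carries exactly one element of $f^{-1}(P)$. This rests on combining postcritical finiteness (to localize all branching of $f$ above the unique marked point of each face of $\Gamma$) with the classification of branched covers of a disk by a disk with a single branch value as the power maps $z\mapsto z^k$. Once step (iii) is secured, parts (1) and (2) amount to routine combinatorial bookkeeping.
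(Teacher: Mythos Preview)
Your approach matches the paper's: delete a boundary edge of the face containing $q$, smooth or prune as needed, iterate for (2), and for (3) observe that $f^{-1}(\Gamma)$ is a spine for $S^2-f^{-1}(P)$ so that (2) applies with $Q=f^{-1}(P)$; your treatment of (3) is in fact more detailed than the paper's one-line reduction.

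One point to tighten in (1): not every edge on $\partial D_q$ separates $D_q$ from a \emph{different} face. An edge with $D_q$ on both sides is a bridge (as occurs for the middle edge of a dumbbell graph when $q$ lies in the outer region), and deleting it disconnects $\Gamma$, so your sentence ``the edge $e$ separates $D_q$ from a second face $D_p$'' can fail for an arbitrary choice of $e$. The paper handles this by explicitly choosing $e\subset\overline{U}_p\cap\overline{U}_q$ for some face $U_p$ with $p\neq q$; such an edge exists since $\#Q\ge 3$ forces $D_q$ to share an edge with at least one other face. With that adjustment your argument goes through.
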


\begin{proof}
  For (1), let $U_q$ be the unique face of $\Gamma$ containing $q$. Since $\Gamma$ is a spine, there exists $p\in Q - \{q\}$ such that the face $U_p$ containing $p$ has the property that $\overline{U}_p \cap \overline{U}_q$ contains an edge $e \subset \Gamma$.  Since $p \neq q$, the set $\Gamma'\coloneqq\Gamma - \{\text{interior}(e)\}$ is connected. If $e$ is not a loop, we declare the common endpoint of its ends to be an ordinary point, and not a vertex; we obtain a new spine $\Gamma'$. If $e$ is a loop at $v$, we further prune $\Gamma'$ by also deleting the dangling edge $e'$ joining $v$ (which is now valence $1$ in $\Gamma'$) to say $w$, and finally declaring $w$ to be an ordinary point, and not a vertex; we obtain a new spine $\Gamma''$. Statement (2) then follows by (1) and induction. Statement (3) follows from (2) upon setting $Q\coloneqq f^{-1}(P)$.
\end{proof}

Thus for a spine $\Gamma$, we define pullback by applying Lemma~\ref{lemma:eraseone}, choosing any subgraph of $f^{-1}(\Gamma)$ which is again a spine.
\end{proof}

\begin{proof}[Proof for trees]
Given $P \subset S^2$ finite with $\#P \geq 2$, an \emph{admissible tree containing $P$} is a tree containing $P$ such that any vertex of valence $1$ or $2$ lies in $P$.  The complexity of such a tree is at most $M\coloneqq4\#P-1$.  To see this: for a tree $T$, we denote by $C(T)$ its complexity, and by $V_k(T)$ the set of vertices of valence $k$.  Let $T'$ be the tree obtained from $T$ by deleting vertices of valence $2$.  Thus $C(T) \leq C(T')+2\#V_2(T) \leq C(T')+2\#P$.  An easy induction argument shows that the number of interior vertices of $T'$ is at most the number of leaves of $T'$, yielding $\sum_{k \geq 3}V_k(T') \leq \#P$. Thus $\#V(T')=\sum_{k \geq 1}V_k(T') \leq 2\#P$. Euler's Formula gives $\#E(T')=\#V(T')-1$ and so $C(T') \leq 2\#P-1$. Combined with the first estimate, this yields the claimed complexity bound $M$. In particular  the set of such trees is $G$-invariant, with finitely many orbits. 

\begin{lem}\label{lem:trees}
  The following properties of trees hold:
\begin{enumerate}
\item Suppose $Q \subset S^2$ is finite, with $\#Q \geq 3$, and $q\in Q$. Then any admissible tree $\Gamma$ containing $Q$ contains an admissible tree containing $Q-\{q\}$. 
\item Suppose $P \subset Q$ are finite, with $\#P \geq 2$. Then any admissible tree $\Gamma$ containing $Q$ contains an admissible tree containing $P$.  
\item If $f\colon (S^2,P)\to (S^2,P)$ is a Thurston map and $\Gamma$ is an admissible tree containing $P$, then $f^{-1}(\Gamma)$ contains an admissible tree containing $P$.
\end{enumerate}
\end{lem}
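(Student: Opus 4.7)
The plan is to prove the three parts in order, with (2) following from (1) by induction and (3) reducing to (2) after taking a spanning tree of $f^{-1}(\Gamma)$.

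For part (1), I will proceed by case analysis on the valence $v$ of $q$ in $\Gamma$. If $v \geq 3$, then $\Gamma$ itself already serves as the required admissible tree for $Q \setminus \{q\}$: every vertex of valence $\leq 2$ in $\Gamma$ is in $Q$ by assumption and is not $q$. If $v = 1$, let $w$ be the unique neighbor of $q$, and delete $q$ together with the edge $qw$. The valence of $w$ then drops by one: if its new valence is $1$ (old valence $2$), then $w$ was already in $Q$; if its new valence is $2$ and $w \notin Q$, I further suppress $w$ by merging the two remaining incident edges into a single edge; all other vertex valences are unchanged. If $v = 2$, I suppress $q$ directly, merging its two incident edges into one. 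In each case a direct check confirms that the surviving valence-$1$ and valence-$2$ vertices all lie in $Q \setminus \{q\}$. Part (2) then follows by induction on $\#(Q \setminus P)$, applying (1) repeatedly to elements of $Q \setminus P$; the hypothesis $\#P \geq 2$ guarantees $\#Q' \geq 3$ at every intermediate step $Q' \supsetneq P$ where a reduction is performed, so (1) is applicable.

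For (3), the crucial observation is that $f^{-1}(\Gamma)$ is connected. Since $P$ contains all critical values of $f$ and $P \subseteq \Gamma$, the complement $S^2 \setminus \Gamma$ is an open disk over which $f$ is unramified; its $f$-preimage is therefore a disjoint union of $\deg(f)$ open disks, whose complement in $S^2$ is necessarily connected (a separation of $f^{-1}(\Gamma)$ would force one of the complementary regions not to be a disk). I then pick any spanning tree $T$ of the finite graph $f^{-1}(\Gamma)$. Its vertex set is $V(T) = V(f^{-1}(\Gamma)) = f^{-1}(V(\Gamma))$ (critical points lie over critical values, which lie in $P \subseteq V(\Gamma)$), and this contains $f^{-1}(P) \supseteq P$ since $f(P) \subseteq P$. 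The tree $T$ is vacuously admissible with respect to the set $Q \coloneqq V(T)$, so applying part (2) to $P \subseteq Q$ yields an admissible subtree of $T$, hence of $f^{-1}(\Gamma)$, containing $P$, as required.

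The main obstacle is the careful bookkeeping in part (1): one must verify that the particular deletion and smoothing moves used in the valence-$1$ and valence-$2$ cases really do restore admissibility with respect to $Q \setminus \{q\}$, which requires tracking how each affected vertex's valence and membership in the distinguished subset change simultaneously.
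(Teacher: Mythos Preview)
Your proof is correct and follows essentially the same approach as the paper: the same valence-by-valence case analysis for (1), induction for (2), and spanning-tree-plus-(2) for (3). You in fact supply more detail than the paper does, notably an argument for the connectedness of $f^{-1}(\Gamma)$ in part (3), which the paper simply asserts.
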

\begin{proof}
  (1) If $\text{val}(q)\geq 3$ we set $\Gamma'\coloneqq\Gamma$ as graph, and we are done. If $\text{val}(q)=2$ we set $\Gamma'\coloneqq\Gamma$ as graph but decree $q$ to be an ordinary point and not a vertex. If $\text{val}(q)=1$ we delete $q$ and the unique edge $e$ incident to it to obtain $\Gamma'$ as a set.  Denoting $p$ the other vertex of $e$, consider the valence $v$ of $p$ in the original graph $\Gamma$. We have $v \geq 2$ since $\#Q \geq 3$. If $v=2$ then $p\in Q$ and we stop with $\Gamma'$. If $v=3$ and $p\in Q$ then we stop with $\Gamma'$. If $v=3$ and $p \not\in Q$ then we decree $p$ to be an ordinary point of $\Gamma'$, and stop. If $v \geq 4$ then we stop with $\Gamma'$. Statement (2) follows from (1) and induction.  For statement (3), we apply (2) to any spanning tree containing $Q\coloneqq f^{-1}(P)$ in the connected graph $f^{-1}(\Gamma)$. 
\end{proof}

Thus for an admissible tree $T$ containing $P$, we define pullback by applying Lemma~\ref{lem:trees}, choosing any subtree of $f^{-1}(T)$ which is again an admissible tree.
\end{proof}

\bibliographystyle{amsalpha}
\bibliography{lcrefs.bib}

\end{document}